


\documentclass[final,3p,times]{elsarticle}


\usepackage{amssymb}
\usepackage{graphicx,setspace}
\usepackage{psfrag}
\usepackage{amsfonts}
\expandafter\let\csname equation*\endcsname\relax
\expandafter\let\csname endequation*\endcsname\relax
\usepackage{amsmath}
\usepackage{amssymb, amsthm}
\usepackage{mathrsfs}
\usepackage{epstopdf}
\usepackage{float}
\usepackage{lineno,hyperref}
\usepackage{color}
\usepackage{subfigure}

\usepackage{amsmath}
\usepackage{dsfont} 
\usepackage{amssymb} 
\usepackage{color}
\usepackage{extarrows}
\usepackage{graphicx}
\usepackage{epstopdf}

\usepackage{todonotes}




\begin{document}
\newtheorem{definition}{Definition}[section]
\newtheorem{lemma}{Lemma}[section]
\newtheorem{remark}{Remark}[section]
\newtheorem{theorem}{Theorem}[section]
\newtheorem{proposition}{Proposition}
\newtheorem{assumption}{Assumption}[section]
\newtheorem{example}{Example}
\newtheorem{corollary}{Corollary}[section]
\def\ep{\varepsilon}
\def\Rn{\mathbb{R}^{n}}
\def\Rm{\mathbb{R}^{m}}
\def\E{\mathbb{E}}
\def\hte{\hat\theta}
\renewcommand{\theequation}{\thesection.\arabic{equation}}
\begin{frontmatter}

\title{Well-posedness and averaging principle for L\'evy-type McKean-Vlasov stochastic differential equations under local Lipschitz conditions}

\author{Ying Chao\fnref{addr1}}\ead{yingchao1993@xjtu.edu.cn}
\author{Jinqiao Duan\fnref{addr2,addr3}}\ead{duan@gbu.edu.cn}
\author{Ting Gao\fnref{addr2}}\ead{tgao0716@hust.edu.cn}
\author{Pingyuan Wei\corref{cor1}\fnref{addr4}}
\ead{weipingyuan@pku.edu.cn}\cortext[cor1]{Corresponding author}

\address[addr1]{School of Mathematics and Statistics, Xi'an Jiaotong University, Xi'an, Shaanxi 710049, China}
\address[addr2]{School of Mathematics and Statistics $\&$ Center for Mathematical Sciences, Huazhong University of Science and Technology, Wuhan, Hubei 430074, China}
\address[addr3]{Department of Mathematics $\&$ Department of Physics, Great Bay University, Dongguan, Guangdong 523000, China}
\address[addr4]{Beijing International Center for Mathematical Research, Peking University, Beijing 100871, China}

\begin{abstract}
In this paper, we investigate a class of McKean-Vlasov stochastic differential equations under L\'evy-type perturbations. We first establish the existence and uniqueness theorem for solutions of the McKean-Vlasov stochastic differential equations by utilizing the Euler-like approximation. Then under some suitable conditions, we show that the solutions of McKean-Vlasov stochastic differential equations can be approximated by the solutions of the associated averaged McKean-Vlasov stochastic differential equations in the sense of mean square convergence. In contrast to the existing work, a novel feature is the use of a much weaker condition---local Lipschitzian in the state variables, allowing for possibly super-linearly growing drift, but linearly growing diffusion and jump coefficients. Therefore, our results are suitable for a wider class of McKean-Vlasov stochastic differential equations. 
 
 \end{abstract}

\begin{keyword}
McKean-Vlasov stochastic differential equations; well-posedness; averaging principle; one-sided local Lipschitz condition; L\'evy-type  perturbations 
\MSC[2010] 60H10 \sep  60G51  \sep 34C29 \sep 35Q83
\end{keyword}

\end{frontmatter}

\renewcommand{\theequation}{\thesection.\arabic{equation}}
\setcounter{equation}{0}

\section{Introduction}

McKean-Vlasov stochastic differential equations (SDEs) have received a great deal of attention in recent years, due to their wide applications across many fields such as stochastic controls, stochastic games and statistical physics. Equations of this type were first initiated in \cite{Mckean1966} inspired by the kinetic theory of Kac \cite{Kac1956}, and differ from usual SDEs in the sense that the coefficients additionally depend on the probability distribution of the solution process. In the literature, McKean-Vlasov SDEs are also referred to as mean-field SDEs. The reason is that these equations are obtained as the limits of some weakly interacting particles equations as the number of particles tends to infinity (so-called propagation of chaos). 

In view of the development on the aforementioned McKean-Vlasov SDEs, the noise processes considered are mainly Gaussian ones. However, the systems of practical relevance in physics and biology, sometimes have to be modeled by non-Gaussian noise. This can be verified by some abrupt jump in the individual particles and the related whole population. To reproduce the performance of these natural phenomena, it is appropriate to consider (non-Gaussian) L\'evy-type perturbations \cite{Applebaum2009,Duan2015,Liu2022}. In this paper, we focus on the following  $d$-dimensional L\'evy-type McKean-Vlasov SDE
\begin{align}\label{Main}
&dX_{\varepsilon}(t)=b\left(\frac{t}{\varepsilon},X_{\varepsilon}(t),\mathscr{L}_{X_{\varepsilon}(t)}\right) dt+\sigma\left(\frac{t}{\varepsilon},X_{\varepsilon}(t),\mathscr{L}_{X_{\varepsilon}(t)}\right)dW(t)+\int_{U}h\left(\frac{t}{\varepsilon},X_{\varepsilon}(t),\mathscr{L}_{X_{\varepsilon}(t)},z\right)\tilde{N}(dt,dz),\;\; X_{\varepsilon}(0)=x_0,
\end{align}
on $t\in [0,T]$ with a small parameter $\varepsilon>0$. Where $\mathscr{L}_{X(t)}$ denotes the law of $X(t)$ at time $t$, $W(t)$ is an $m$-dimensional standard Wiener process defined on the complete probability space $(\Omega, \mathcal{F}, (\mathcal{F}_t)_{t\geq0}, \mathbb{P})$ with $(\mathcal{F}_t)_{t\geq0}$ satisfying the usual conditions,  $N(dt,dz)$ is the Poisson random measure on a $\sigma$-finite measure space $(U,\mathcal{U},\nu)$ with $U\subseteq {\mathbb{R}^d}\backslash \{ 0\}$, independent of $W(t)$, and $\tilde N(dt,dz) = N(dt,dz) - \nu (dz)dt$ is its compensated Poisson random measure. The precise assumptions on the coefficients $b:[0,T]\times\mathbb{R}^d\times M_2(\mathbb{R}^d)\to\mathbb{R}^d$, $\sigma:[0,T]\times\mathbb{R}^d\times M_2(\mathbb{R}^d)\to\mathbb{R}^{d\times m}$, and $h:[0,T]\times\mathbb{R}^d\times M_2(\mathbb{R}^d)\times\mathbb{R}^d\to\mathbb{R}^d$ will be specified in later sections (see Section \ref{sec2} for the definition of $M_2(\mathbb{R}^d)$).
The first aim of this paper is to consider the well-posedness of the McKean-Vlasov SDEs in the form of  (\ref{Main}). Let us briefly review some previous works about the well-posedness for McKean-Vlasov SDEs with Brownian noise. Under the global Lipschitz condition, the existence and uniqueness of the strong solutions for McKean-Vlasov SDEs were obtained by using the fixed point theorem, for example, in \cite{Carmona2018probabilistic, Bahlali2020}. The results of the case with one-sided global Lipschitz drift term and global Lipschitz diffusion term can be found in \cite{Wang2018,Dos2022simulation}. 
To deal with the situation of locally Lipschitz with respect to (w.r.t. for short) the measure and globally Lipschitz w.r.t. the state variable, Kloeden and Lorenz \cite{Kloeden2010} established a method of constructing interpolated Euler-like approximations. Recently, an extension to local Lipschitz conditions w.r.t. the state variable but under uniform linear growth assumption was treated by Li \emph{et al.} \cite{Li2022strong}; see also \cite{Ding2021}.  Moreover, Hong \emph{et al.} \cite{Hong2022mckean} discussed the strong and weak well-posedness of a class of McKean-Vlasov SDEs with drift and diffusion coefficients fulfilling some locally monotone conditions, whereas they need to impose some extra structure on the coefficients to obtain a unique solution.

Unlike the case of Brownian noise, the related study of McKean-Vlasov SDEs with L\'evy noise is still in its infancy, but some interesting works are emerging. For example, Hao \emph{et al.} \cite{Hao2016} considered a class of L\'evy-type McKean-Vlasov SDEs satisfying global Lipschitz plus linear growth conditions, established the existence and uniqueness of solutions and studied the intrinsic link with nonlocal Fokker-Planck equations. The well-posedness results have been further developed in the case of super-linearly drift, diffusion and jump coefficients by using the fixed point theorem \cite{Mehri2020,Neelima2020}.

Motivated by the previous works on Brownian case as well as L\'evy case, in this paper, we aim to treat (\ref{Main}) only imposing local Lipschitz conditions w.r.t. the state variable, allowing for possibly super-linearly growing drift. We highlight that some essential difficulties occur. One the one hand, compared with the classical SDEs, the standard localization argument etc. cannot be applied directly due to the distribution dependent coefficients. On the other hand, the non-Gaussian Lévy noise gives rise to several difficulties both in analytic and probabilistic aspects. Therefore, the results of classical SDEs (even with L\'evy noise) or that of McKean-Vlasov SDEs with Brownian noise cannot be extended to McKean-Vlasov SDEs with L\'evy noise directly. In this paper, we develop a L\'evy-type technique of Euler-like approximations to overcome difficulties cased by the local condition and the distribution dependency. The crux of our method, which is different from the Brownian case \cite{Kloeden2010}, lies in dealing with the drift terms in more general conditions as well as the jump terms.

Apart from the existence and uniqueness of solutions, we are further interested in establishing a stochastic averaging principle for (\ref{Main}) with drifts of polynomial growth under local Lipschitz conditions w.r.t. the state variable. In fact, the averaging principle is a powerful method to extract effective dynamics from complex systems arising from mechanics, mathematics, and other research areas. Since the pioneer work of Khasminskii \cite{Khasminskii1968}, the averaging principle for usual SDEs has received a lot of attention, and has stimulated much of the study on controls, stability analyses, optimization methods. Although the problems considered take different forms (usually classified in terms of the noise or the conditions satisfied by its nonlinear terms), the essence behind the averaging method is to give a simplification of dynamical systems and obtain approximating solutions to differential equations, see, e.g., \cite{Xu2011, Ma2019, Pei2020}. Based on the idea of stochastic averaging, the second goal of this paper is to show that the solution of (\ref{Main}), with $0<\varepsilon\ll1$, converges to the following averaged equation:
\begin{align} \label{MASDE}
 d\bar{X}(t)=\bar{b}\left(\bar{X}(t),\mathscr{L}_{\bar{X}(t)}\right)dt+\bar{\sigma}\left(\bar{X}(t),\mathscr{L}_{\bar{X}(t)}\right)dW(t)+\int_{U}\bar{h}\left(\bar{X}(t),\mathscr{L}_{\bar{X}(t)},z\right)\tilde{N}(dt,dz),\;\;\bar{X}(0)=x_0,
\end{align}
in certain sense, by taking the assumption of proper averaging conditions. 
For more details of \eqref{MASDE}, see Section \ref{sec3}.

Again, we have to point out that, compared with the case of classical SDEs, there are much few results on the averaging principle for McKean-Vlasov SDEs due to their distribution-dependent feature. And the existing references on averaging principles for McKean-Vlasov SDEs mainly focus on Brownian cases \cite{Xu2021,Shen2022}. For some interesting results with other types of noise, e.g., fractional Brownian noise, we refer to \cite{Shen2022av}. Nevertheless, to the best of authors’ knowledge, the averaging principle for McKean-Vlasov SDEs with L\'evy noise has not yet been considered to date. This inspires us to establish an averaging principle.

The rest of this paper is arranged as follows. In Section \ref{sec2}, we devote to investigating the existence and uniqueness of solutions to a class of McKean-Vlasov SDEs with L\'evy-type perturbations. In Section \ref{sec3}, we prove an averaging principle for the solutions of the considered McKean-Vlasov SDEs. In Section \ref{sec4}, we present a specific example to illustrate the theoretical results of this article. And we postpone the details of the proof for Lemma \ref{lemma3-1} and the propagation of chaos result to the end as Appendix.


\renewcommand{\theequation}{\thesection.\arabic{equation}}
\setcounter{equation}{0}

\section{Well-posedness of L\'evy-type McKean-Vlasov stochastic differential equations}\label{sec2}

 We start with some notations used in the sequel. Let $|\cdot|$ and $\langle \cdot, \cdot\rangle$ be the Euclidean vector norm and the scalar product in $\mathbb{R}^d$, respectively. For a matrix $A$, we use the Frobenius norm by $\|A\|=\sqrt{tr[AA^T]}$, where $A^T$ stands for the transpose of the matrix $A$. Let $\mathcal{M}(\mathbb{R}^d)$ represent the space of all probability measures on $\mathbb{R}^d$ carrying the usual topology of weak convergence. Furthermore, for $p\geqslant 1$, let $\mathcal{M}_p(\mathbb{R}^d)$ denote the subspace of $\mathcal{M}(\mathbb{R}^d)$ as follows:
 $$\mathcal{M}_p(\mathbb{R}^d):=\left\{\mu\in\mathcal{M}(\mathbb{R}^d): \mu(|\cdot|^p):=\int_{\mathbb{R}^d}|x|^p\mu(dx)<\infty\right\}.$$
 For $\mu_1, \mu_2\in\mathcal{M}_p(\mathbb{R}^d)$, the $L^p$-Wasserstein metric between $\mu_1$ and $\mu_2$ is defined by
 $$W_p(\mu_1,\mu_2):=\inf_{\pi\in\mathscr{C}(\mu_1,\mu_2)}\left(\int_{\mathbb{R}^d\times \mathbb{R}^d}|x-y|^p\pi(dx,dy)\right)^{\frac{1}{p}},$$
where $\mathscr{C}(\mu_1,\mu_2)$ means the collection of all the probability measures whose marginal distributions are $\mu_1$, $\mu_2$, respectively. Then $\mathcal{M}_p(\mathbb{R}^d)$ endowed with above metric is a Polish space. 

Let $\delta_x$ be Dirac delta measure centered at the point $x\in\mathbb{R}^d$. A direct calculation shows the $\delta_x$ belongs to $\mathcal{M}_p(\mathbb{R}^d)$ for any $x\in\mathbb{R}^d$. Another remark is that, if $\mu_1=\mathscr{L}_X$, $\mu_2=\mathscr{L}_Y$ are the corresponding distributions of random variables $X$ and $Y$ respectively, then 
$$(W_p(\mu_1,\mu_2))^p\leqslant \int_{\mathbb{R}^d\times \mathbb{R}^d}|x-y|^p\mathscr{L}_{(X,Y)}(dx,dy)=\mathbb{E}|X-Y|^p,$$
in which $\mathscr{L}_{(X,Y)}$ represents the joint distribution of random vector $(X,Y)$. 

Given $T>0$, let $D([0,T];\mathbb{R}^d)$ be the collection of all c\`{a}dl\`{a}g (i.e., right continuous with left limits) functions from $[0,T]$ to $\mathbb{R}^d$ equipped with the supremum norm. For $p\geqslant1$, we use $L^p(\Omega;\mathbb{R}^d)$ to denote the family of all $\mathbb{R}^d$-valued random variables $X$ with $\mathbb{E}|X|^p<\infty$. Analogously, we denote by $L^p(\Omega;D([0,T];\mathbb{R}^d))$ the subspace of all $D([0,T];\mathbb{R}^d)$-valued random variables, which satisfy $\mathbb{E}[\sup_{0\leqslant t\leqslant T}|X(t)|^p]<\infty.$
Equipping with the following norm 
$$||X||_{L^p}:=\left(\mathbb{E}\left[\sup_{0\leqslant t\leqslant T}|X(t)|^p\right]\right)^{\frac{1}{p}},$$
the space $L^p(\Omega;D([0,T];\mathbb{R}^d))$ is a Banach space.

\subsection{Formulation of the well-posedness results}

Keep in mind that this section is dedicated to establishing the existence and uniqueness theorem for the solutions of $d$-dimensional L\'evy-type McKean-Vlasov SDEs, i.e.,
\begin{align}\label{Main-equation}
&dX(t)=b\left(t,X(t),\mathscr{L}_{X(t)}\right) dt+\sigma\left(t,X(t),\mathscr{L}_{X(t)}\right)dW(t)+\int_{U}h\left(t,X(t),\mathscr{L}_{X(t)},z\right)\tilde{N}(dt,dz)
\end{align}
on $t\in[0,T]$ with initial data $X(0)=x_0$, 
where $$b:[0,T]\times\mathbb{R}^d\times M(\mathbb{R}^d)\to\mathbb{R}^d, \;\sigma:[0,T]\times\mathbb{R}^d\times M(\mathbb{R}^d)\to\mathbb{R}^{d\times m},\; h:[0,T]\times\mathbb{R}^d\times M(\mathbb{R}^d)\times U\to\mathbb{R}^d,$$
are Borel measurable functions. 
Let us first give the precise definition of a solution to (\ref{Main-equation}).


\begin{definition}
We say that (\ref{Main-equation}) admits a unique strong solution if there exists an $\{\mathcal{F}_t\}_{0\leqslant t\leqslant T}$-adapted $\mathbb{R}^d$-valued c\`{a}dl\`{a}g stochastic process $\{X(t)\}_{0\leqslant t\leqslant T}$ such that
\begin{enumerate}
\item[(i)]
$X(t)=x_0+\int_0^t b\left(s,X(s),\mathscr{L}_{X(s)}\right) ds+\int_0^t\sigma\left(s,X(s),\mathscr{L}_{X(s)}\right)dW(s)+\int_0^t\int_{U}h\left(s,X(s),\mathscr{L}_{X(s)},z\right)\tilde{N}(ds,dz), \; t\in[0,T], \; \mathbb{P}-a.s.;$
\item[(ii)]
if $Y=\{Y(t)\}_{0\leqslant t\leqslant T}$ is another solution with $Y(0)=x_0$, then 
$$\mathbb{P}(X(t)=Y(t) \;\hbox{ for all} \;0\leqslant t\leqslant T)=1.$$
\end{enumerate}
\end{definition}


Assume that there exists $\kappa\geqslant2$ such that the following assumptions hold.

\noindent{\bf A1. }(One-sided local Lipschitz condition on the state variable) For every $R>0$, there exists a constant $L_R>0$ such that for any $t\in[0,T]$, $x,y\in\mathbb{R}^d$ with $|x|\vee|y|\leqslant R$, and $\mu\in\mathcal{M}_2(\mathbb{R}^d)$,
$$\langle x-y, b(t,x,\mu)-b(t,y,\mu)\rangle\vee\|\sigma(t,x,\mu)-\sigma(t,y,\mu)\|^2\vee\int_U\left|h(t,x,\mu,z)-h(t,y,\mu,z)\right|^2\nu(dz)\leqslant L_R|x-y|^2.$$
{\bf A2. }(Global Lipschitz condition on the measure) There exists $L>0$ such that, for any $t\in[0,T]$, $x\in\mathbb{R}^d$ and $\mu_1,\mu_2\in\mathcal{M}_2(\mathbb{R}^d)$,
$$|b(t,x,\mu_1)-b(t,x,\mu_2)|^2+\|\sigma(t,x,\mu_1)-\sigma(t,x,\mu_2)\|^2+\int_U\left|h(t,x,\mu,z)-h(t,y,\mu,z)\right|^2\nu(dz)\leqslant LW_2^2(\mu_1,\mu_2).$$ 
\noindent{\bf A3. }(Continuity) For any $t\in[0,T]$, $b(t,\cdot,\cdot), \sigma(t,\cdot,\cdot), \int_Uh(t,\cdot,\cdot,z)\nu(dz)$ is continuous on $\mathbb{R}^d\times\mathcal{M}_2(\mathbb{R}^d).$\par
\noindent{\bf A4. }(One-sided linear \& global linear growth condition) There exists $K>0$ such that, for any $t\in[0,T]$, $x\in\mathbb{R}^d$ and $\mu\in\mathcal{M}_2(\mathbb{R}^d)$,
$$\langle x, b(t,x,\mu)\rangle\vee \|\sigma(t,x,\mu)\|^2\vee \int_U|h(t,x,\mu,z)|^2\nu(dz)\leqslant K(1+|x|^2+W_2^2(\mu,\delta_0)).$$ 
\noindent{\bf A5. }($\kappa$-order growth condition w.r.t. the drift coefficient $b$) There exists positive $K_1$ such that, for any $t\in[0,T]$, $x\in\mathbb{R}^d$ and $\mu\in\mathcal{M}_{2}(\mathbb{R}^d)$,
$$|b(t,x,\mu)|^2\leqslant K_1(1+|x|^{\kappa}+W_2^{\kappa}(\mu,\delta_0)).$$ 

\noindent{\bf A6. }($r$-order moment condition for the initial data) Consider $x_0\in L^r(\Omega;\mathbb{R}^d)$ for some  $r\geqslant \max\{\kappa^2/2,4\}$, i.e., $\mathbb{E}|x_0|^r<\infty.$

\noindent{\bf A7. }(Additional growth conditions w.r.t. the jump coefficient $h$) There exists a positive $K_2$ such that, for any $t\in[0,T]$, $x\in\mathbb{R}^d$ and $\mu\in\mathcal{M}_{2}(\mathbb{R}^d)$,
$$
\int_U|h(t,x,\mu,z)|^r\nu(dz)\leqslant K_2(1+|x|^r+W_2^{r}(\mu,\delta_0)).
$$
In addition, if $\kappa >2$, there exist constants $K_3, L^{'}>0$, such that, for any $t\in[0,T]$, $x,y\in\mathbb{R}^d$ and $\mu,\mu_1,\mu_2\in\mathcal{M}_{2}(\mathbb{R}^d)$,

$$
\int_U|h(t,x,\mu,z)|^\kappa\nu(dz)\leqslant K_3(1+|x|^\kappa+W_2^{\kappa}(\mu,\delta_0)),
$$
$$\int_U\left|h(t,x,\mu_1,z)-h(t,y,\mu_2,z)\right|^{\kappa}\nu(dz)\leq L^{'}W_2^{\kappa}(\mu_1,\mu_2),$$
and for every $R>0$, there exists a constant $L_R^{'}>0$ such that for any $t\in[0,T]$, $x,y\in\mathbb{R}^d$ with $|x|\vee|y|\leqslant R$, and $\mu\in\mathcal{M}_2(\mathbb{R}^d)$,
$$
\int_U\left|h(t,x,\mu,z)-h(t,y,\mu,z)\right|^{\kappa}\nu(dz)\leqslant L_R^{'}|x-y|^{\kappa}.
$$

The main result of this section is stated as follows.

\begin{theorem}\label{mainresult1} {\bf (Well-posedness)}
Let Assumptions {\bf A1}-{\bf A7} be satisfied. Then 
equation (\ref{Main-equation}) admits a unique strong solution $X(t)\in L^{\kappa}(\Omega;\mathbb{R}^d)$ for $t\in[0,T]$ with the initial value $X(0)=x_0$. Moreover, the following estimate holds:
\begin{equation}
\mathbb{E}\left[\sup_{0\leqslant t\leqslant T}|X(t)|^r\right]\leqslant C,
\end{equation}
with $C:=C(T,r,\mathbb{E}|x_0|^r)$ is a positive number. Here $\kappa\geqslant2$ and $r\geqslant \max\{\kappa^2/2,4\}$.
\end{theorem}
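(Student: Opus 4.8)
The plan is to adapt the interpolated Euler-like approximation of Kloeden and Lorenz \cite{Kloeden2010} to the present L\'evy setting, the genuinely new points being the treatment of the super-linearly growing drift (handled through the one-sided conditions \textbf{A1}, \textbf{A4} and the growth bound \textbf{A5}) and of the compensated-jump coefficient (handled through the extra requirements in \textbf{A7}). First I would construct the approximations: fix a partition $0=t_0<t_1<\cdots<t_n=T$ of mesh $\delta$, write $\eta_\delta(t)=t_k$ for $t\in[t_k,t_{k+1})$, and define $X^\delta$ recursively by letting $X^\delta$ solve, on each subinterval $[t_k,t_{k+1}]$ with initial datum $X^\delta(t_k)$, the distribution-independent L\'evy SDE obtained from \eqref{Main-equation} by freezing the law to $\mu_k:=\mathscr{L}_{X^\delta(t_k)}$. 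Since $\mu_k$ is a fixed element of $\mathcal{M}_2(\mathbb{R}^d)$, the frozen coefficients are one-sided locally Lipschitz, continuous, and of one-sided (resp. linear) growth in the state variable by \textbf{A1}, \textbf{A3}, \textbf{A4}; classical theory for L\'evy-driven SDEs under one-sided Lipschitz drift then gives a unique strong solution on $[t_k,t_{k+1}]$ remaining in $L^r(\Omega;\mathbb{R}^d)$, and concatenating these pieces yields a well-defined c\`adl\`ag process $X^\delta$ on $[0,T]$, with $\mu_k\in\mathcal{M}_r(\mathbb{R}^d)$ at every step.

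Next I would prove uniform (in $\delta$) moment estimates. Applying It\^o's formula to $|X^\delta(t)|^r$, the drift term $r|X^\delta|^{r-2}\langle X^\delta,b\rangle$ is controlled by \textbf{A4}, where the super-linearity is harmless because the one-sided bound only sees $\langle x,b\rangle\le K(1+|x|^2+W_2^2(\mu,\delta_0))$ with $W_2^2(\mu_k,\delta_0)=\mathbb{E}|X^\delta(t_k)|^2$; the It\^o correction from $\sigma$ is controlled by \textbf{A4} and that from the jumps by \textbf{A4} (for the $|X^\delta|^{r-2}|h|^2$ part) and \textbf{A7} (for the $|h|^r$ part); the Burkholder--Davis--Gundy inequality handles the martingale parts, and Gronwall's lemma gives $\sup_\delta\mathbb{E}\big[\sup_{0\le t\le T}|X^\delta(t)|^r\big]\le C(T,r,\mathbb{E}|x_0|^r)$. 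The same scheme of estimate applied to the increments yields $\mathbb{E}|X^\delta(t)-X^\delta(\eta_\delta(t))|^p\le C\,\delta^{\,\min(p/2,\,1)}$ for the exponents $p$ that will be needed; bounding the $\kappa$-th moment of the super-linear drift via \textbf{A5}, i.e. $\mathbb{E}|b|^\kappa\le C\big(1+\mathbb{E}|X^\delta|^{\kappa^2/2}+\cdots\big)$, is exactly what forces the requirement $r\ge\max\{\kappa^2/2,4\}$ in \textbf{A6}.

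The core step is the Cauchy property. For two meshes $\delta,\delta'$ put $Z=X^\delta-X^{\delta'}$ and, for $R>0$, let $\tau_R$ be the first time $|X^\delta|\vee|X^{\delta'}|$ exceeds $R$. Applying It\^o's formula to $|Z(t\wedge\tau_R)|^2$ (and, when $\kappa>2$, also to $|Z(t\wedge\tau_R)|^\kappa$), I split the drift difference through $b(s,X^{\delta'}(s),\mu^\delta_{\eta_\delta(s)})$: the state part is absorbed by the one-sided local Lipschitz constant $L_R$ from \textbf{A1}, while the measure part is bounded via \textbf{A2} by $\sqrt{L}\,W_2\big(\mu^\delta_{\eta_\delta(s)},\mu^{\delta'}_{\eta_{\delta'}(s)}\big)$, which by the increment estimates of the previous paragraph is $\le C(\sqrt\delta+\sqrt{\delta'})+C\big(\mathbb{E}|Z(s)|^2\big)^{1/2}$; the diffusion and jump differences are treated the same way using \textbf{A1}, \textbf{A2} and the $\kappa$-order pieces of \textbf{A7}. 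Gronwall then gives $\mathbb{E}\big[\sup_{t\le T}|Z(t\wedge\tau_R)|^2\big]\le C(\delta+\delta')e^{C_RT}$, and splitting off $\{\tau_R<T\}$ by H\"older's inequality together with Chebyshev's inequality and the uniform $L^r$ bound yields $\mathbb{E}\big[\sup_{t\le T}|Z(t)|^2\big]\le C(\delta+\delta')e^{C_RT}+C R^{-\alpha}$ for some $\alpha>0$; choosing $R$ large and then $\delta,\delta'$ small shows $(X^\delta)$ is Cauchy in $L^2(\Omega;D([0,T];\mathbb{R}^d))$.

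Finally I would pass to the limit. Along a subsequence with $X^{\delta_j}\to X$ a.s. uniformly on $[0,T]$, one has $\mathscr{L}_{X^{\delta_j}(\eta_{\delta_j}(s))}\to\mathscr{L}_{X(s)}$ in $W_2$, so by the continuity hypothesis \textbf{A3} the frozen integrands converge a.s. to the true ones, and by \textbf{A4}--\textbf{A5}, \textbf{A7} together with the uniform $L^r$ bound (with $r>\kappa$) they are uniformly integrable; hence the ordinary and stochastic integrals pass to the limit and $X$ satisfies the integral equation in the definition of a solution, while Fatou's lemma and the uniform moment bound give $X\in L^\kappa(\Omega;\mathbb{R}^d)$ and the asserted estimate $\mathbb{E}[\sup_{0\le t\le T}|X(t)|^r]\le C$. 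Uniqueness follows by the same It\^o-plus-stopping-time argument applied to the difference of two solutions with the same initial datum, using \textbf{A1}, \textbf{A2} and Gronwall on $[0,\tau_R]$ and then letting $R\to\infty$ via the moment bound. The main obstacle, and the place where the L\'evy McKean-Vlasov structure really bites, is the Cauchy step: the local Lipschitz hypothesis \textbf{A1} forces a localization in the state variable, but the law argument cannot be localized, so one must crucially exploit that the measure dependence is \emph{globally} Lipschitz (\textbf{A2}) and that the approximations carry moment bounds uniform in $\delta$ (so that $\tau_R\to\infty$ uniformly in the mesh), all while keeping the super-linear drift and the compensated jumps under control through the one-sided and $\kappa$-order conditions \textbf{A4}, \textbf{A5}, \textbf{A7}.
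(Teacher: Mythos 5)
Your proposal follows essentially the same route as the paper: an Euler-like scheme with the law frozen at partition points (solvable by classical one-sided-Lipschitz Lévy SDE theory), uniform $r$-th moment bounds and time-increment estimates, a Cauchy argument combining the stopping time $\tau_R$ with the global Lipschitzness in the measure and the splitting on $\{\tau_R>T\}$ versus $\{\tau_R\le T\}$, and finally passage to the limit via \textbf{A3} plus uniform integrability, with uniqueness by the same localization. The only cosmetic difference is that you run the Cauchy estimate primarily in $L^2$ (invoking the $\kappa$-th power only when $\kappa>2$) while the paper works directly in $L^{\kappa}(\Omega;D([0,T];\mathbb{R}^d))$; given the uniform $L^r$ bounds with $r\ge\kappa$, this changes nothing essential.
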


\begin{remark}
We point out that the conditions in Assumptions {\bf A1}-{\bf A7} are carefully chosen and the results in Theorem \ref{mainresult1} are generally applicable:
\begin{itemize}
    \item[(i)] The one-sided local Lipschitz condition in {\bf A1} is weaker than the classical local Lipschitz condition. In fact, it is clear that the local Lipschitz condition implies the one-sided local Lipschitz one (by applying the mean value inequality directly). But it is not vice versa. For example, let $b(t,x,\mu)=x^3-x^{\frac{1}{3}}+t+\int_{\mathbb{R}} z\mu(dz)$ in $\mathbb{R}$. Note that, for $|x|\vee|y|\leqslant R$,
    $$
    \langle x-y, b(t,x,\mu)-b(t,y,\mu)\rangle=|x-y|^2(x^2+xy+y^2)-(x-y)(x^{\frac{1}{3}}-y^{\frac{1}{3}})\leqslant 3R^2|x-y|^2,
    $$
    as $(x-y)(x^{\frac{1}{3}}-y^{\frac{1}{3}})\geqslant0$ for all $x,y$. We can find that $b$ is one-sided locally Lipschitz but not locally Lipschitz.
    \item[(ii)] Comparing with the one-sided (global) Lipschitz condition in the recently paper \cite{Neelima2020}, i.e., there exists $C>0$ such that for any $x,y\in\mathbb{R}^d$, $\mu\in\mathcal{M}_2(\mathbb{R}^d),$
    $$
    \langle x-y, b(t,x,\mu)-b(t,y,\mu)\rangle+\|\sigma(t,x,\mu)-\sigma(t,y,\mu)\|^2+\int_U\left|h(t,x,\mu,z)-h(t,y,\mu,z)\right|^2\nu(dz)\leqslant C|x-y|^2,
    $$
    the one-sided local Lipschitz condition in {\bf A1} is expressed via the operations ``$\vee$" instead of ``$+$". Even so, such a ``local" condition may include some weaker cases. For example, let $b$ be a one-sided locally Lipschitz function and $\sigma=h=x$ with $\nu(U)<\infty$. Then {\bf A1} holds but the the one-sided (global) Lipschitz condition in \cite{Neelima2020} is failed.

    \item[(iii)] The result is degenerated into a pure-Brownian one if we set $h\equiv 0$. Different from the previous Brownian case paper \cite{Li2022strong} in which the drift coefficient need to satisfy the linear growth condition, the drift coefficient $b$ here only satisfies the one-sided linear growth condition, and may be polynomial growth w.r.t. the state variable (see {\bf A4-A5}).
\end{itemize}
\end{remark}

\subsection{Euler type approximation and auxiliary lemmas}

One key of our method to prove Theorem \ref{mainresult1} is constructing an Euler-like sequence for the McKean-Vlasov SDEs \eqref{Main-equation}. Once we show that this sequence is Cauchy in an appropriate  complete space (i.e., $L^{\kappa}(\Omega;D([0,T];\mathbb{R}^d))$, as we will see later), we are able to conclude that there exists a limiting process which is indeed the desired solution to \eqref{Main-equation}.

To this end, given $T>0$, we consider the equidistant partitions of $[0,T]$. For any integer $n\geqslant1$, we set $\triangle_n=\frac{T}{n}$ and $t_k^n=k\triangle_n$, $k=0,1,\ldots, n$. In this way, for fixed $k$ ($0\leqslant k \leqslant n-1$) and $t\in(t_k^n, t_{k+1}^n]$, we consider 
\begin{equation}\label{Approximation-k}
dX^{(n)}(t)=b\left(t,X^{(n)}(t),\mu^{(n)}_{t_k^n}\right)dt+\sigma\left(t,X^{(n)}(t),\mu^{(n)}_{t_k^n}\right)dW(t)+\int_{U}h\left(t,X^{(n)}(t),\mu^{(n)}_{t_k^n},z\right)\tilde{N}(dt,dz),
\end{equation}
where $\mu^{(n)}_{t_k^n}=\mathscr{L}_{X^{(n)}(t_k^n)}$. It is clear that, for each fixed $k$, if the initial value $X^{(n)}(t_k^n)$ and the distribution $\mu^{(n)}_{t_k^n}$ (at the left point $t_k^n$) are known, then \eqref{Approximation-k} is a standard SDE independent of the law of $X^{(n)}(t)$. We now prove inductively the existence and uniqueness of the solution to \eqref{Approximation-k}.

In fact, for $k=0$ and $t\in[0,t_1^n]$, the distribution is $ \mu^{(n)}_{0}=\mathscr{L}_{X^{(n)}(0)}=\mathscr{L}_{x_0}$. Applying Assumptions {\bf A1}$\&${\bf A4}, we observe that the coefficients in \eqref{Approximation-k} (with $k=0$) satisfy 
$$\left\langle x-y, b\left(t,x,\mu^{(n)}_{0}\right)-b\left(t,y,\mu^{(n)}_{0}\right)\right\rangle+\left\|\sigma\left(t,x,\mu^{(n)}_{0}\right)-\sigma\left(t,y,\mu^{(n)}_{0}\right)\right\|^2+\int_U\left|h\left(t,x,\mu^{(n)}_{0},z\right)-h\left(t,y,\mu^{(n)}_{0},z\right)\right|^2\nu(dz)\leqslant 3L_R|x-y|^2$$
and 
\begin{equation}
\begin{split}
\left\langle x, b\left(t,x,\mu^{(n)}_{0}\right)\right\rangle+\left\|\sigma\left(t,x,\mu^{(n)}_{0}\right)\right\|^2+\int_U\left|h\left(t,x,\mu^{(n)}_{0},z\right)\right|^2\nu(dz)&\leqslant 3K\left(1+|x|^2+W_2^2\left(\mu^{(n)}_{0},\delta_0\right)\right)\\
&\leqslant3K\left(1+|x|^2\right)\left(1+\mathbb{E}\left|X^{(n)}(0)\right|^2\right).\notag
\end{split}
\end{equation}
Referring to Theorem 1.1 in \cite{Majka2016note}, it admits a unique solution on $[0,t_1^n]$. Furthermore, by Assumption {\bf A5}, it follows that for $r\geqslant\max\{\frac{\kappa^2}{2},4\}$, there exits a positive constant $C$ such that 
\begin{equation}
\mathbb{E}\left[\sup_{0\leqslant t\leqslant t_1^n}\left|X^{(n)}(t)\right |^r \right]\leqslant C\left(1+\mathbb{E}\left|X^{(n)}(0)\right|^r\right), \notag
\end{equation}
whose proof is quite similar to Lemma \ref{UBP} below, and we omit the details here. 
Therefore, we can define $X^{(n)}(t_1^n)$ (which satisfies $\mathbb{E}|X^{(n)}(t_1^n)|^r<\infty$) and $\mu^{(n)}_{t_1^n}=\mathscr{L}_{X^{(n)}(t_1^n)}$.

For $k=1$ and $t\in(t_1^n,t_2^n]$, we can use $(X^{(n)}(t_1^n),\mu^{(n)}_{t_1^n})$ in place of $(X^{(n)}(0),\mu^{(n)}_{0})$ and repeat the above procedure. Inductively, for any  $k=0,1,\ldots, n-1$ and $t\in(t_k^n, t_{k+1}^n]$, we obtain the existence and uniqueness of the solution to the SDE \eqref{Approximation-k} as well as the corresponding estimate 
\begin{equation}\label{estimate-k}
\mathbb{E}\left[\sup_{t_k^n\leqslant t\leqslant t_{k+1}^n}\left|X^{(n)}(t)\right|^r\right]\leqslant C\left(1+\mathbb{E}\left|X^{(n)}(t_k^n)\right|^r\right),
\end{equation}
by similar arguments. 

At this point, we define by $[t]_n=t_k^n$ for all $t\in (t_k^n,t_{k+1}^n]$, $k=0,1,\ldots, n-1$. Then, for $t\in[0,T]$,  we introduce the following approximating SDE 
\begin{equation}\label{Approximation-Eq}
dX^{(n)}(t)=b\left(t,X^{(n)}(t),\mu^{(n)}_{[t]_{n}}\right)dt+\sigma\left(t,X^{(n)}(t),\mu^{(n)}_{[t]_{n}}\right)dW(t)+\int_{U}h\left(t,X^{(n)}(t),\mu^{(n)}_{[t]_{n}},z\right)\tilde{N}(dt,dz),
\end{equation}
with the initial value $X^{(n)}(0)=x_0$, where $\mu^{(n)}_{[t]_{n}}=\mathscr{L}_{X^{(n)}([t]_{n})}$. According to the above procedures and results for \eqref{Approximation-k}, we conclude that there exists a unique solution to \eqref{Approximation-Eq}. In fact, for each $n\geqslant 1$ and $t\in[0,T]$, we can always find certain $k_\ast$ ($0\leqslant k_\ast \leqslant n-1$) such that $t\in (t_{k_\ast}^n,t_{{k_\ast}+1}^n]$. Then, the solution to \eqref{Approximation-Eq} can be written as
\begin{align}
X^{n}(t)=&x_0+\sum_{k=0}^{k_\ast}\int_{t_k^n}^{t_{k+1}^n \wedge t}
b\left(t,X^{(n)}(s),\mu^{(n)}_{t_k^n}\right)ds+\sigma\left(s,X^{(n)}(s),\mu^{(n)}_{t_k^n}\right)dW(s)+\int_{U}h\left(s,X^{(n)}(s),\mu^{(n)}_{t_k^n},z\right)\tilde{N}(ds,dz) \notag\\
=&X^{n}(t_1^n)+\sum_{k=1}^{k_\ast}\int_{t_k^n}^{t_{k+1}^n \wedge t}
b\left(t,X^{(n)}(s),\mu^{(n)}_{t_k^n}\right)ds+\sigma\left(s,X^{(n)}(s),\mu^{(n)}_{t_k^n}\right)dW(s)+\int_{U}h\left(s,X^{(n)}(s),\mu^{(n)}_{t_k^n},z\right)\tilde{N}(ds,dz) \notag\\
&\cdots \;\;\;\; \cdots\notag\\
=&X^{n}(t_{k_\ast}^n)+\int_{t_{k_\ast}^n}^{t}
b\left(t,X^{(n)}(s),\mu^{(n)}_{t_{k_\ast}^n}\right)ds+\sigma\left(s,X^{(n)}(s),\mu^{(n)}_{t_{k_\ast}^n}\right)dW(s)+\int_{U}h\left(s,X^{(n)}(s),\mu^{(n)}_{t_{k_\ast}^n},z\right)\tilde{N}(ds,dz),\notag
\end{align}
and it is well defined based on the results for \eqref{Approximation-k} with $k=0,1,\ldots,k_\ast$.
Moreover, we have the following estimate 
\begin{equation}
\mathbb{E}\left[\sup_{0\leqslant t\leqslant T}\left|X^{(n)}(t)\right|^r\right]=\mathbb{E}\left[\max_{0\leqslant k\leqslant n-1}\sup_{t_k^n\leqslant t\leqslant t_{k+1}^{n}}\left|X^{(n)}(t)\right|^r\right]\leqslant \sum_{k=0}^{n-1}\mathbb{E}\left[\sup_{t_k^n\leqslant t\leqslant t_{k+1}^{n}}\left|X^{(n)}(t)\right|^r\right]\leqslant C(n)<\infty.
\end{equation}
Notice that, under Assumption {\bf A6}, i.e., the initial data $x_0$ satisfies $\mathbb{E}|x_0|^r<\infty$ with $r\geqslant\max\{\frac{\kappa^2}{2},4\}\geqslant\kappa$, we conclude that $X^{(n)}\in L^{r}(\Omega;D([0,T];\mathbb{R}^d))\subset L^{\kappa}(\Omega;D([0,T];\mathbb{R}^d))$. That is, the stochastic processes  $\{X^{(n)}(t)\}_{n\geqslant1}$ given by \eqref{Approximation-Eq} form a sequence in $L^{\kappa}(\Omega;D([0,T];\mathbb{R}^d))$. Next, to show that this sequence is indeed Cauchy, we need the following two useful lemmas.

\begin{lemma}\label{UBP} (Uniform boundedness property)
Under Assumptions {\bf A4, A6} and {\bf A7}, for any $T>0$, there is a positive constant $C_r$ (which is independent of $n$) such that
\begin{equation}\label{UBeq}
\mathbb{E}\left[\sup_{0\leqslant t\leqslant T}\left|X^{(n)}(t)\right|^r\right]\leqslant C_r.
\end{equation}
\end{lemma}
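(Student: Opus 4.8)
The plan is to establish the uniform moment bound \eqref{UBeq} by an It\^o-formula argument applied to $|X^{(n)}(t)|^r$, followed by a Gr\"onwall-type inequality, with all constants tracked so as to be independent of $n$. First I would fix $n\geqslant 1$, observe that by construction $X^{(n)}$ is a genuine c\`adl\`ag semimartingale solving \eqref{Approximation-Eq}, and apply It\^o's formula for jump processes to the function $x\mapsto|x|^r$. This produces four families of terms after taking expectations: the drift term $r\int_0^t |X^{(n)}(s)|^{r-2}\langle X^{(n)}(s),b(s,X^{(n)}(s),\mu^{(n)}_{[s]_n})\rangle\,ds$; the It\^o-correction term from the Brownian part, bounded by a constant times $\int_0^t |X^{(n)}(s)|^{r-2}\|\sigma(s,X^{(n)}(s),\mu^{(n)}_{[s]_n})\|^2\,ds$; the compensated-jump martingale term, which vanishes in expectation after a localization/stopping-time argument; and the jump-correction term $\int_0^t\int_U \big(|X^{(n)}(s^-)+h|^r - |X^{(n)}(s^-)|^r - r|X^{(n)}(s^-)|^{r-2}\langle X^{(n)}(s^-),h\rangle\big)\nu(dz)\,ds$.

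The key estimates are then as follows. For the drift and Brownian-correction terms I would use Assumption {\bf A4} (the one-sided linear growth $\langle x,b\rangle\vee\|\sigma\|^2\leqslant K(1+|x|^2+W_2^2(\mu,\delta_0))$) together with $W_2^2(\mu^{(n)}_{[s]_n},\delta_0)=\mathbb{E}|X^{(n)}([s]_n)|^2\leqslant 1+\mathbb{E}[\sup_{0\leqslant u\leqslant s}|X^{(n)}(u)|^2]$ and Young's inequality to bound $|X^{(n)}(s)|^{r-2}(1+|X^{(n)}(s)|^2+W_2^2(\ldots))$ by a constant times $1+|X^{(n)}(s)|^r+\mathbb{E}[\sup_{u\leqslant s}|X^{(n)}(u)|^r]$. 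For the jump-correction term I would split the $\nu$-integrand: a Taylor expansion gives a pointwise bound of the form $C_r(|X^{(n)}(s^-)|^{r-2}|h|^2 + |h|^r)$ (using $|a+b|^r\le (|a|+|b|)^r$ and convexity), so that after integrating in $z$ I can control it by $\int_U|h|^2\nu(dz)$ times $|X^{(n)}(s^-)|^{r-2}$ plus $\int_U|h|^r\nu(dz)$; then Assumption {\bf A4} handles the first factor and Assumption {\bf A7} (the $r$-order growth $\int_U|h|^r\nu(dz)\leqslant K_2(1+|x|^r+W_2^r(\mu,\delta_0))$) handles the second, again followed by Young's inequality to absorb everything into $1+|X^{(n)}(s)|^r+\mathbb{E}[\sup_{u\le s}|X^{(n)}(u)|^r]$. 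For the martingale term I would introduce stopping times $\tau_M=\inf\{t:|X^{(n)}(t)|\geqslant M\}$, run the whole computation up to $t\wedge\tau_M$ so the local martingale has zero expectation (or, for the Brownian stochastic integral, apply the Burkholder--Davis--Gundy inequality to the supremum), collect
\[
\mathbb{E}\Big[\sup_{0\leqslant u\leqslant t\wedge\tau_M}|X^{(n)}(u)|^r\Big]\leqslant C_r\Big(1+\mathbb{E}|x_0|^r\Big)+C_r\int_0^t\Big(1+\mathbb{E}\big[\sup_{0\leqslant u\leqslant s\wedge\tau_M}|X^{(n)}(u)|^r\big]\Big)\,ds,
\]
apply Gr\"onwall's lemma to get a bound uniform in $M$, and finally let $M\to\infty$ via Fatou's lemma. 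Crucially, the resulting constant $C_r=C_r(T,r,K,K_2,\mathbb{E}|x_0|^r)$ does not depend on $n$, because the coefficient bounds {\bf A4} and {\bf A7} are uniform in the measure argument and $[s]_n\in[0,s]$ always lies in the interval over which the supremum is taken.

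The main obstacle is handling the distribution-dependence cleanly: the term $W_2^2(\mu^{(n)}_{[s]_n},\delta_0)=\mathbb{E}|X^{(n)}([s]_n)|^2$ involves the expectation of the process at the \emph{grid point} $[s]_n$, not at $s$, so one must be careful that bounding it by $\mathbb{E}[\sup_{u\le s}|X^{(n)}(u)|^r]^{2/r}$ (via Jensen/H\"older) keeps the Gr\"onwall argument closed and $n$-independent; since $[s]_n\leqslant s$ this works, but it is the step where the argument genuinely differs from the classical SDE case. A secondary technical point is that applying BDG to the compensated Poisson integral for the $r$-th moment of the supremum requires the bound $\int_U|h|^r\nu(dz)<\infty$ from {\bf A7} (this is precisely why {\bf A7} is imposed), and one should either use BDG for the jump martingale directly or rely on the stopping-time truncation to kill it in expectation and separately control $\sup$ by passing the supremum inside after the fact; I would take the stopping-time route for the jump part and BDG only for the Brownian part to keep things simple. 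The rest is routine.
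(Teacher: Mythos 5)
Your overall strategy (It\^o's formula for $|x|^r$, Young's inequality with {\bf A4}/{\bf A7} to reduce every integrand to $1+|X^{(n)}(s)|^r+\mathbb{E}\sup_{u\leqslant s}|X^{(n)}(u)|^r$, then Gr\"onwall) is the same as the paper's, and your treatment of the grid-point term $W_2^2(\mu^{(n)}_{[s]_n},\delta_0)=\mathbb{E}|X^{(n)}([s]_n)|^2$ is fine. But there is a genuine gap in how you dispose of the compensated-Poisson martingale term $r\int_0^t\int_U|X^{(n)}(s^-)|^{r-2}\langle X^{(n)}(s^-),h\rangle\,\tilde N(ds,dz)$. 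You choose the ``stopping-time route'': localize so that this local martingale has zero expectation, and then ``pass the supremum inside after the fact.'' That last step is not a legitimate operation. Killing the martingale in expectation only bounds $\mathbb{E}\,|X^{(n)}(t\wedge\tau_M)|^r$ for each fixed $t$, i.e.\ it gives $\sup_{t}\mathbb{E}[\cdots]$, whereas the lemma asserts $\mathbb{E}[\sup_t\cdots]$; there is no way to interchange $\sup$ and $\mathbb{E}$ afterwards. Nor can you rewrite the jump martingale as the difference of an integral against $N$ and its compensator and bound each monotonically, since that would require $\int_U|h|\,\nu(dz)<\infty$, which is not implied by {\bf A4}/{\bf A7} when $\nu(U)=\infty$. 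The only available fix is the alternative you mention in passing and then discard: apply a maximal inequality (Kunita's first inequality, i.e.\ BDG for jump martingales, or Doob) to this term, using the $L^2(\nu)$ bound $\int_U|h|^2\nu(dz)\leqslant K(1+|x|^2+W_2^2(\mu,\delta_0))$ from {\bf A4}. This is exactly what the paper does: it lumps the Brownian and compensated-Poisson stochastic integrals into a single local martingale $M_t$ and estimates $\mathbb{E}\sup_{t\leqslant u}|M_t|$ via BDG, then absorbs $\frac12\mathbb{E}\sup_{t\leqslant u}|X^{(n)}(t)|^r$ by Young's inequality.

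A secondary, more cosmetic point: in It\^o's formula the jump-correction term appears as an integral against the random measure $N(ds,dz)$, not against $\nu(dz)\,ds$ as you wrote it. Your estimate still goes through because after the Taylor bound $C_r(|X^{(n)}(s^-)|^{r-2}|h|^2+|h|^r)$ the integrand is nonnegative, so the process is nondecreasing, its supremum equals its terminal value, and the compensation formula converts the expectation into the $\nu(dz)\,ds$ integral you intended (this is how the paper treats its term $N_t$); but you should say this rather than silently replace $N$ by its compensator, since for this term too the difference is a martingale that you have not otherwise controlled. With the jump martingale handled by Kunita/BDG and the correction term handled by monotonicity plus compensation, your constants depend only on $r,T,K,K_2$ and $\mathbb{E}|x_0|^r$, and the $n$-independence claim is correct.
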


\begin{proof}
For $r\geqslant\max\{\frac{\kappa^2}{2},4\}$ and $t\in[0,T]$, applying It\^o's formula to $|x|^r$ yields that,
\begin{align}\label{ItoEq}
&\left|X^{(n)}(t)\right|^r\notag\\
=&|x_0|^r+r\int_0^{t}\left|X^{(n)}(s)\right|^{r-2}\left\langle X^{(n)}(s), b\left(s,X^{(n)}(s),\mu^{(n)}_{[s]_n}\right)\right\rangle ds+\frac{r}{2}\int_0^{t}\left|X^{(n)}(s)\right|^{r-2} \left\|\sigma(s,X^{(n)}(s),\mu^{(n)}_{[s]_n})\right\|^2 ds\notag\\
&+\frac{r(r-2)}{2}\int_0^{t}\left|X^{(n)}(s)\right|^{r-4} \left\|(X^{(n)}(s))^T\sigma\left(s,X^{(n)}(s),\mu^{(n)}_{[s]_n}\right)\right\|^2 ds\notag\\
&+r\int_0^{t}\left|X^{(n)}(s)\right|^{r-2}\left\langle X^{(n)}(s), \sigma\left(s,X^{(n)}(s),\mu^{(n)}_{[s]_n}\right)dW(s)\right\rangle\notag\\
&+r\int_0^{t}\int_U\left|X^{(n)}(s)\right|^{r-2}\left\langle X^{(n)}(s), h\left(s,X^{(n)}(s),\mu^{(n)}_{[s]_n},z\right)\right\rangle \tilde{N}(ds,dz)\notag\\
&+\int_0^{t}\int_U\left[\left|X^{(n)}(s)+h\left(s,X^{(n)}(s),\mu^{(n)}_{[s]_n},z\right)\right|^{r}-\left|X^{(n)}(s)\right|^r-r\left|X^{(n)}(s)\right|^{r-2}\left\langle X^{(n)}(s), h\left(s,X^{(n)}(s),\mu^{(n)}_{[s]_n},z\right)\right\rangle\right] N(ds,dz).
\end{align}
By virtue of Assumption {\bf A4} and Young's inequality 
\begin{equation}\label{YE}
ab\leqslant\epsilon \frac{a^p}{p}+\epsilon^{-\frac{q}{p}}\frac{b^q}{q}\; \;\; \hbox{for all}\; \epsilon, a, b>0,\;\hbox{where}\; p>1,\; \frac{1}{p}+\frac{1}{q}=1, 
\end{equation}
one can estimate the second term of (\ref{ItoEq}) by
\begin{align}\label{term2}
&rK\int_0^{t}\left|X^{(n)}(s)\right|^{r-2}\left(1+\left|X^{(n)}(s)\right|^2+\mathbb{E}\left|X^{(n)}([s]_{n})\right|^2\right)ds\notag\\
\leqslant& (r-2)K\int_0^t\left|X^{(n)}(s)\right|^rds+2K\int_0^t\left(1+\left|X^{(n)}(s)\right|^2+\mathbb{E}\left|X^{(n)}([s]_{n})\right|^2\right)^{\frac{r}{2}}ds\notag\\
\leqslant& (r-2)K\int_0^t\left|X^{(n)}(s)\right|^rds+2\cdot3^{\frac{r}{2}-1}K\int_0^t\left(1+\left|X^{(n)}(s)\right|^r+\mathbb{E}\left|X^{(n)}([s]_{n})\right|^r\right)ds.
\end{align}
Here, we take $\epsilon=1$, $p=\frac{r}{r-2}$, $q=\frac{r}{2}$, and the last step is based on the elementary inequality $(a+b+c)^{l}\leqslant 3^{l-1}(|a|^l+|b|^l+|c|^l)$ for all $a,b,c\in\mathbb{R}$, $l\geqslant1$ and H\"older inequality. Analogously, the third and fourth terms of (\ref{ItoEq}) can be estimated by
\begin{equation}\label{term3}
\frac{r-2}{2}K\int_0^t\left|X^{(n)}(s)\right|^rds+3^{\frac{r}{2}-1}K\int_0^t\left(1+\left|X^{(n)}(s)\right|^r+\mathbb{E}\left|X^{(n)}([s]_{n})\right|^r\right)ds,
\end{equation}
and
\begin{equation}\label{term4}
\frac{(r-2)^2}{2}K\int_0^t\left|X^{(n)}(s)\right|^rds+3^{\frac{r}{2}-1}(r-2)K\int_0^t\left(1+\left|X^{(n)}(s)\right|^r+\mathbb{E}\left|X^{(n)}([s]_{n})\right|^r\right)ds,
\end{equation}
respectively.
Further, note that the map $y\to|y|^r$ is of class $C^2$, by the remainder formula, for any $y, b\in\mathbb{R}^d$, one obtains
\begin{equation}\label{RF}
|y|^r-|b|^r-r|b|^{r-2}\langle b, y-b\rangle\leqslant C_1\int_0^1|y-b|^2|b+\theta(y-b)|^{r-2}d\theta\leqslant C_1(|b|^{r-2}|y-b|^2+|y-b|^r),
\end{equation}
and thus the last term on the right hand side of (\ref{ItoEq}) can be estimated as
\begin{equation}\label{termN}
C_1\int_0^{t}\int_U\left(\left|X^{(n)}(s)\right|^{r-2}\left|h\left(s,X^{(n)}(s),\mu^{(n)}_{[s]_n},z\right)\right|^2+\left|h\left(s,X^{(n)}(s),\mu^{(n)}_{[s]_n},z\right)\right|^{r}\right)N(ds,dz).
\end{equation}
Denote the above estimate (\ref{termN}) by $N_t$. Substituting (\ref{term2})-(\ref{termN}) into (\ref{ItoEq}), taking suprema over $[0,u]$ for $u\in[0,T]$ and then taking expectations gives that
\begin{align}\label{Te}
\mathbb{E}\sup_{0\leqslant t\leqslant u}|X^{(n)}(t)|^r\leqslant&\mathbb{E}|x_0|^r+\mathbb{E}\sup_{0\leqslant t\leqslant u}|M_t|+\mathbb{E}\sup_{0\leqslant t\leqslant u}|N_t|+\frac{(r+1)(r-2)}{2}K\int_0^u\mathbb{E}\left|X^{(n)}(s)\right|^rds\notag\\
&+3^{\frac{r}{2}-1}(r+1)K\int_0^u\left(1+\mathbb{E}\left|X^{(n)}(s)\right|^r+\mathbb{E}\left|X^{(n)}([s]_{n})\right|^r\right)ds,
\end{align}
where  
$$
M_t:=r\int_0^{t}\left|X^{(n)}(s)\right|^{r-2}\left\langle X^{(n)}(s), \sigma\left(s,X^{(n)}(s),\mu^{(n)}_{[s]_n}\right)dW(s)\right\rangle+r\int_0^{t}\int_U\left|X^{(n)}(s)\right|^{r-2}\left\langle X^{(n)}(s), h\left(s,X^{(n)}(s),\mu^{(n)}_{[s]_n},z\right)\right\rangle \tilde{N}(ds,dz)
$$
is indeed a local martingale. 
On the one hand, by the Burkholder-Davis-Gundy inequality (see, e.g., Theorem 7.3 of Chapter 1 in \cite{Mao2007} or Lemma 2.1 of \cite{Dareiotis2016}), there exists a constant $C_2>0$ such that
\begin{align}
\mathbb{E}\sup_{0\leqslant t\leqslant u}|M_t|&\leqslant C_2r\mathbb{E}\left[\int_0^{u}\left|X^{(n)}(s)\right|^{2r-2}\left\|\sigma\left(s,X^{(n)}(s),\mu^{(n)}_{[s]_n}\right)\right\|^2ds+\int_0^{u}\int_U\left|X^{(n)}(s)\right|^{2r-2}\left|h\left(s,X^{(n)}(s),\mu^{(n)}_{[s]_n}\right)\right|^2\nu(dz)ds\right]^{\frac{1}{2}}\notag\\
&\leqslant C_2r\mathbb{E}\left[\sup_{0\leqslant t\leqslant u}\left|X^{(n)}(t)\right|^{r-1}\left(\int_0^{u}\left\|\sigma\left(s,X^{(n)}(s),\mu^{(n)}_{[s]_n}\right)\right\|^2ds+\int_0^{u}\int_U\left|h\left(s,X^{(n)}(s),\mu^{(n)}_{[s]_n}\right)\right|^2\nu(dz)ds\right)^{\frac{1}{2}}\right],\notag
\end{align}
which on the application of Assumption {\bf A4} gives
\begin{equation*}
\mathbb{E}\sup_{0\leqslant t\leqslant u}|M_t|\leqslant C_2r\mathbb{E}\left[\sup_{0\leqslant t\leqslant u}\left|X^{(n)}(t)\right|^{r-1}\left(2K\int_0^u\left(1+\left|X^{(n)}(s)\right|^2+\mathbb{E}\left|X^{(n)}([s]_n)\right|^2\right)ds\right)^{\frac{1}{2}}\right].
\end{equation*}
Then, due to Young's inequality given in (\ref{YE}) (letting $\epsilon=\frac{1}{2C_2(r-1)}$, $p=\frac{r}{r-1}$, $q=r$), H\"older inequality,  elementary inequality and Lyapunov inequality, one further has
\begin{align}\label{Me}
\mathbb{E}\sup_{0\leqslant t\leqslant u}|M_t|
&\leqslant \frac{1}{2}\mathbb{E}\sup_{0\leqslant t\leqslant u}\left|X^{(n)}(t)\right|^r+C_2^r(2(r-1))^{r-1}(2K)^{\frac{r}{2}}\mathbb{E}\left[\int_0^u\left(1+\left|X^{(n)}(s)\right|^2+\mathbb{E}\left|X^{(n)}([s]_n)\right|^2\right)ds\right]^{\frac{r}{2}}\notag\\
&\leqslant \frac{1}{2}\mathbb{E}\sup_{0\leqslant t\leqslant u}\left|X^{(n)}(t)\right|^r+C_2^r(2(r-1))^{r-1}(2K)^{\frac{r}{2}}u^{\frac{r}{2}-1}\mathbb{E}\left[\int_0^u\left(1+\left|X^{(n)}(s)\right|^2+\mathbb{E}\left|X^{(n)}([s]_n)\right|^2\right)^{\frac{r}{2}}ds\right]\notag\\
&\leqslant \frac{1}{2}\mathbb{E}\sup_{0\leqslant t\leqslant u}\left|X^{(n)}(t)\right|^r+C_2^r(2(r-1))^{r-1}(2K)^{\frac{r}{2}}(3u)^{\frac{r}{2}-1}\mathbb{E}\int_0^u\left(1+\left|X^{(n)}(s)\right|^r+\mathbb{E}\left|X^{(n)}([s]_n)\right|^r\right)ds.
\end{align}
On the other hand, by Assumptions {\bf A4}$\&${\bf A7}, Young's inequality (\ref{YE}) (with $\epsilon=1$, $p=\frac{r}{r-2}$, $q=\frac{r}{2}$), elementary inequality and Lyapunov inequality,
\begin{align}\label{Ne}
\mathbb{E}\sup_{0\leqslant t\leqslant u}|N_t|
&\leqslant C_1\mathbb{E}\int_0^{u}\int_U\left(\left|X^{(n)}(s)\right|^{r-2}\left|h\left(s,X^{(n)}(s),\mu^{(n)}_{[s]_n},z\right)\right|^2+\left|h\left(s,X^{(n)}(s),\mu^{(n)}_{[s]_n},z\right)\right|^{r}\right)\nu(dz)ds\notag\\
&\leqslant C_1\mathbb{E}\int_0^{u}\left[K\left|X^{(n)}(s)\right|^{r-2}\left(1+\left|X^{(n)}(s)\right|^2+\mathbb{E}\left|X^{(n)}([s]_{n})\right|^2\right)+K_2\left(1+\left|X^{(n)}(s)\right|^{r}+\left(\mathbb{E}\left|X^{(n)}([s]_{n})\right|^2\right)^{\frac{r}{2}}\right)\right]ds\notag\\
&\leqslant C_1K\frac{r-2}{r}\int_0^u\mathbb{E}\left|X^{(n)}(s)\right|^rds+C_1\cdot\left(\frac{2K}{r}3^{\frac{r}{2}-1}+K_2\right)\int_0^u\left(1+\mathbb{E}\left|X^{(n)}(s)\right|^r+\mathbb{E}\left|X^{(n)}([s]_{n})\right|^r\right)ds.
\end{align}
As a result, combining all the above estimates (\ref{Me})-(\ref{Ne}) and applying the Gr\"onwall inequality, we get from (\ref{Te}) that
\begin{equation*}
\mathbb{E}\sup_{0\leqslant t\leqslant T}\left|X^{(n)}(t)\right|^r\leqslant2(1+\mathbb{E}|x_0|^r)e^{\left[K(r-2)\frac{r^2+r+2C_1}{r}+4\cdot3^{\frac{r}{2}-1}(r+1)+C_2^{r}2^{r+1}(r-1)^{r-1}(2K)^{\frac{r}{2}}(3T)^{\frac{r}{2}-1}+4C_1\cdot\left(\frac{2K}{r}3^{\frac{r}{2}-1}+K_2\right)\right]T}\leqslant C_r.
\end{equation*}
It is clear that the positive constant $C_r$ is dependent on $r, T, K, K_2$ and initial condition $x_0$, but independent of $n$. The proof is therefore complete.
\end{proof}

\begin{lemma}\label{HolderC} (Time H\"older continuity)
Let Assumptions {\bf A4-A7} hold. For any $x_0\in L^r(\Omega;\mathbb{R}^d)$ with $r\geqslant\kappa^2/2$, there exists a positive constant $C_{\kappa}$ such that, for any $0\leqslant s\leqslant t\leqslant T$ with $|t-s|\leqslant1$,
\begin{equation}
\sup_{n\geqslant1}\mathbb{E}\left[\left|X^{(n)}(t)-X^{(n)}(s)\right|^{\kappa}\right]\leqslant C_{\kappa}|t-s|.
\end{equation}
\end{lemma}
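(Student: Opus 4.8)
The plan is to work directly with the integral form of the approximating equation \eqref{Approximation-Eq}. Fix $0\leqslant s\leqslant t\leqslant T$ with $|t-s|\leqslant1$ and write
\begin{align*}
X^{(n)}(t)-X^{(n)}(s)=&\int_s^t b\big(r,X^{(n)}(r),\mu^{(n)}_{[r]_n}\big)\,dr+\int_s^t\sigma\big(r,X^{(n)}(r),\mu^{(n)}_{[r]_n}\big)\,dW(r)\\
&+\int_s^t\int_U h\big(r,X^{(n)}(r),\mu^{(n)}_{[r]_n},z\big)\,\tilde N(dr,dz)=:I_1+I_2+I_3.
\end{align*}
Using $|I_1+I_2+I_3|^{\kappa}\leqslant 3^{\kappa-1}(|I_1|^{\kappa}+|I_2|^{\kappa}+|I_3|^{\kappa})$, it suffices to bound $\mathbb{E}|I_j|^{\kappa}\leqslant C_{\kappa}|t-s|$ for $j=1,2,3$, uniformly in $n$. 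Throughout I would use that, by Lemma \ref{UBP}, $\sup_{n\geqslant1}\mathbb{E}[\sup_{0\leqslant r\leqslant T}|X^{(n)}(r)|^{p}]\leqslant C_p$ for every $p\leqslant r$ (in particular for $p=\kappa$ and $p=\kappa^2/2$), and that $W_2(\mu^{(n)}_{[r]_n},\delta_0)^2=\mathbb{E}|X^{(n)}([r]_n)|^2$, so every Wasserstein term in {\bf A4}, {\bf A5}, {\bf A7} is controlled by these $n$-uniform moment bounds.

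For the drift term $I_1$: by H\"older's inequality in the time variable, $\mathbb{E}|I_1|^{\kappa}\leqslant|t-s|^{\kappa-1}\mathbb{E}\int_s^t|b(r,X^{(n)}(r),\mu^{(n)}_{[r]_n})|^{\kappa}\,dr$. Then {\bf A5} gives $|b|^{\kappa}\leqslant K_1^{\kappa/2}\big(1+|X^{(n)}(r)|^{\kappa}+W_2^{\kappa}(\mu^{(n)}_{[r]_n},\delta_0)\big)^{\kappa/2}\leqslant C\big(1+|X^{(n)}(r)|^{\kappa^2/2}+(\mathbb{E}|X^{(n)}([r]_n)|^2)^{\kappa^2/4}\big)$, so taking expectations and invoking Lemma \ref{UBP} (which is exactly why the hypothesis $r\geqslant\kappa^2/2$ is imposed here) yields $\mathbb{E}|I_1|^{\kappa}\leqslant C_{\kappa}|t-s|^{\kappa}\leqslant C_{\kappa}|t-s|$, the last inequality because $|t-s|\leqslant1$ and $\kappa\geqslant1$.

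For the diffusion term $I_2$: the Burkholder--Davis--Gundy inequality together with H\"older's inequality in time gives $\mathbb{E}|I_2|^{\kappa}\leqslant C_{\kappa}\mathbb{E}\big(\int_s^t\|\sigma\|^2\,dr\big)^{\kappa/2}\leqslant C_{\kappa}|t-s|^{\kappa/2-1}\mathbb{E}\int_s^t\|\sigma\|^{\kappa}\,dr$, and by {\bf A4} ($\|\sigma\|^{\kappa}\leqslant C(1+|X^{(n)}(r)|^{\kappa}+W_2^{\kappa})$) and Lemma \ref{UBP} (using $r\geqslant\kappa$) this is $\leqslant C_{\kappa}|t-s|^{\kappa/2}\leqslant C_{\kappa}|t-s|$ since $\kappa\geqslant2$. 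For the jump term $I_3$ I would apply Kunita's first inequality for compensated Poisson integrals (a BDG-type estimate for discontinuous martingales, see e.g. \cite{Applebaum2009}): $\mathbb{E}|I_3|^{\kappa}\leqslant C_{\kappa}\mathbb{E}\big(\int_s^t\int_U|h|^2\nu(dz)\,dr\big)^{\kappa/2}+C_{\kappa}\mathbb{E}\int_s^t\int_U|h|^{\kappa}\nu(dz)\,dr$. The first summand is treated exactly as $I_2$ using the bound on $\int_U|h|^2\nu(dz)$ in {\bf A4}; the second summand is handled by $\int_U|h|^2\nu(dz)\leqslant K(1+|x|^2+W_2^2)$ from {\bf A4} when $\kappa=2$, and by the $\kappa$-order growth bound $\int_U|h|^{\kappa}\nu(dz)\leqslant K_3(1+|x|^{\kappa}+W_2^{\kappa}(\mu,\delta_0))$ from {\bf A7} when $\kappa>2$; in all cases Lemma \ref{UBP} produces a bound of the form $C_{\kappa}|t-s|$. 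Combining the three estimates proves the claim, and since every constant ($K,K_1,K_3,C_p$ and the BDG/Kunita constants) is independent of $n$, so is $C_{\kappa}$.

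The main obstacle is the jump term $I_3$: one must use a moment inequality adapted to compensated Poisson martingales rather than the classical continuous-time BDG inequality, and this is precisely where the two-sided structure of {\bf A7} — the separate $\kappa$-order growth bound on $h$ for $\kappa>2$ — enters, since the quadratic bound of {\bf A4} alone cannot absorb the $\kappa$-th power jump remainder when $\kappa>2$. A secondary, purely bookkeeping, difficulty is the moment accounting: because {\bf A5} permits $|b|^2$ to grow like $|x|^{\kappa}$, controlling $\mathbb{E}|I_1|^{\kappa}$ requires the $\kappa^2/2$-th moment of $X^{(n)}$, which is exactly the integrability level at which Lemma \ref{UBP} was established and the reason for the assumption $r\geqslant\kappa^2/2$ in this lemma.
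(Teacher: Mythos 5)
Your proposal is correct and follows essentially the same route as the paper's proof: the same decomposition into drift, diffusion and jump increments, H\"older's inequality plus Assumption {\bf A5} and the uniform moment bound of Lemma \ref{UBP} (with $r\geqslant\kappa^2/2$) for the drift, the Burkholder--Davis--Gundy inequality with {\bf A4} for the diffusion, and Kunita's first inequality with {\bf A4}/{\bf A7} for the compensated Poisson integral, concluding via $|t-s|\leqslant1$ and $\kappa\geqslant2$. No gaps; your remarks on where $r\geqslant\kappa^2/2$ and the $\kappa$-order bound in {\bf A7} enter match the paper's use of these hypotheses.
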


\begin{proof}
It follows from (\ref{Approximation-Eq}) that
$$X^{(n)}(t)-X^{(n)}(s)=\int_s^tb\left(u,X^{(n)}(u),\mu^{(n)}_{[u]_{n}}\right)du+\int_s^t\sigma\left(u,X^{(n)}(u),\mu^{(n)}_{[u]_{n}}\right)dW(u)+\int_s^t\int_{U}h\left(u,X^{(n)}(u),\mu^{(n)}_{[u]_{n}},z\right)\tilde{N}(du,dz).$$
By taking expectations on both sides, one gets
\begin{align}\label{timeDifference}
\mathbb{E}\left|X^{(n)}(t)-X^{(n)}(s)\right|^{\kappa}\leqslant&3^{\kappa-1}\mathbb{E}\left[\left|\int_s^tb\left(u,X^{(n)}(u),\mu^{(n)}_{[u]_{n}}\right)du\right|^{\kappa}\right]+3^{\kappa-1}\mathbb{E}\left[\left|\int_s^t\sigma\left(u,X^{(n)}(u),\mu^{(n)}_{[u]_{n}}\right)dW(u)\right|^{\kappa}\right]\notag\\
&+3^{\kappa-1}\mathbb{E}\left[\left|\int_s^t\int_{U}h\left(u,X^{(n)}(u),\mu^{(n)}_{[u]_{n}},z\right)\tilde{N}(du,dz)\right|^{\kappa}\right]:=B_1+B_2+B_3.
\end{align}
Subsequently we estimate $B_1$, $B_2$ and $B_3$ separately, and for readability, we only present the core estimating procedures here. By H\"older inequality, Assumption {\bf A5}, Lyapunov inequality and estimate (\ref{UBeq}), we have 
\begin{align}
B_1\leqslant& 3^{\kappa-1}(t-s)^{\kappa-1}\int_s^t\mathbb{E}\left[\left|b\left(u,X^{(n)}(u),\mu^{(n)}_{[u]_{n}}\right)\right|^{\kappa}\right]du\leqslant3^{\frac{3\kappa}{2}-2}K_1^{\frac{\kappa}{2}}(t-s)^{\kappa-1}\int_s^t\left(1+\mathbb{E}\left|X^{(n)}(u)\right|^{\frac{\kappa^2}{2}}+\left(\mathbb{E}\left|X^{(n)}([u]_n)\right|^2\right)^{\frac{\kappa^2}{4}}\right)du\notag\\
\leqslant&2\cdot3^{\frac{3\kappa}{2}-2}K_1^{\frac{\kappa}{2}}(t-s)^{\kappa-1}\int_s^t\left(1+\mathbb{E}\sup_{0\leqslant u\leqslant T}\left|X^{(n)}(u)\right|^{\frac{\kappa^2}{2}}\right)du\leqslant C_{\kappa}(t-s)^{\kappa}.\notag
\end{align}
By Burkholder-Davis-Gundy inequality, H\"older inequality, Assumption {\bf A4} and estimate (\ref{UBeq}), we get 
\begin{align}
B_2\leqslant& 3^{\kappa-1}M_{\kappa}\mathbb{E}\left[\left|\int_s^t\left\|\sigma\left(u,X^{(n)}(u),\mu^{(n)}_{[u]_{n}}\right)\right\|^2du\right|^{\frac{\kappa}{2}}\right]\leqslant3^{\kappa-1}M_{\kappa}(t-s)^{\frac{\kappa-2}{2}}\int_s^t\mathbb{E}\left[\left\|\sigma\left(u,X^{(n)}(u),\mu^{(n)}_{[u]_{n}}\right)\right\|^{\kappa}\right]du\notag\\
\leqslant&2\cdot3^{\frac{3\kappa}{2}-2}M_{\kappa}K^{\frac{\kappa}{2}}(t-s)^{\frac{\kappa-2}{2}}\int_s^t\left(1+\mathbb{E}\sup_{0\leqslant u\leqslant T}\left|X^{(n)}(u)\right|^{\kappa}\right)du\leqslant C_{\kappa}(t-s)^{\frac{\kappa}{2}},\notag
\end{align}
where $M_{\kappa}=[\kappa^{\kappa+1}/2(\kappa-1)^{\kappa-1}]^{\frac{\kappa}{2}}$.
Applying the Kunita's first inequality (see, e.g., Theorem 4.4.23 of \cite{Applebaum2009} or Lemma 2.1 of \cite{Dareiotis2016}), H\"older inequality, Assumptions {\bf A4$\&$A7} and estimate (\ref{UBeq}), we obtain
\begin{align}
B_3\leqslant& 3^{\kappa-1}D\mathbb{E}\left[\left|\int_s^t\int_U\left|h\left(u,X^{(n)}(u),\mu^{(n)}_{[u]_{n}},z\right)\right|^2\nu(dz)du\right|^{\frac{\kappa}{2}}\right]
+ 3^{\kappa-1}D\int_s^t\mathbb{E}\left[\int_U\left|h\left(u,X^{(n)}(u),\mu^{(n)}_{[u]_{n}},z\right)\right|^{\kappa}\nu(dz)\right]du\notag\\
\leqslant& 3^{\kappa-1}D(t-s)^{\frac{\kappa-2}{2}}\int_s^t\mathbb{E}\left[\left(\int_U\left|h\left(u,X^{(n)}(u),\mu^{(n)}_{[u]_{n}},z\right)\right|^2\nu(dz)\right)^{\frac{\kappa}{2}}\right]du
+ 3^{\kappa-1}D\int_s^t\mathbb{E}\left[\int_U\left|h\left(u,X^{(n)}(u),\mu^{(n)}_{[u]_{n}},z\right)\right|^{\kappa}\nu(dz)\right]du\notag\\
\leqslant& 3^{\kappa-1}DK^{\frac{\kappa}{2}}(t-s)^{\frac{\kappa-2}{2}}\int_s^t\mathbb{E}\left[\left(1+\left|X^{(n)}(u)\right|^2+\mathbb{E}X^{(n)}([u]_n)^2\right)^{\frac{\kappa}{2}}\right]du
+ 3^{\kappa-1}DK_3\int_s^t\mathbb{E}\left[\left(1+\left|X^{(n)}(u)\right|^{\kappa}+\left(\mathbb{E}X^{(n)}([u]_n)^2\right)^{\frac{\kappa}{2}}\right)\right]du\notag\\
\leqslant&2\cdot3^{\frac{3\kappa}{2}-2}DK^{\frac{\kappa}{2}}(t-s)^{\frac{\kappa-2}{2}}\int_s^t\left(1+\mathbb{E}\sup_{0\leqslant u\leqslant T}\left|X^{(n)}(u)\right|^{\kappa}\right)du+2\cdot3^{\kappa-1}DK_3\int_s^t\left(1+\mathbb{E}\sup_{0\leqslant u\leqslant T}\left|X^{(n)}(u)\right|^{\kappa}\right)du\notag\\
\leqslant& C_{\kappa}[(t-s)+(t-s)^{\frac{\kappa}{2}}],\notag
\end{align}
where $D$ is a positive number dependent of $\kappa$. Consequently, the desired assertion follows by substituting the above estimates of $B_1$, $B_2$, and $B_3$ into (\ref{timeDifference}) and taking the supremum arguments.
\end{proof}

With Lemma \ref{UBP}$\&$\ref{HolderC} at hand, we now show the following lemma. 
\begin{lemma}\label{Cauchy} (Cauchy sequences)
    The sequence $\{X^{(n)}(t)\}_{n\geqslant1}$ given by (\ref{Approximation-Eq}) is a Cauchy sequence in $L^{\kappa}(\Omega;D([0,T];\mathbb{R}^d))$ in the following sense: for any $n,m\geqslant1$, 
\begin{equation}\label{Cauchy-se}
    \left\|X^{(n)}-X^{(m)}\right\|_{L^{\kappa}}=\left(\mathbb{E}\left[\sup_{0\leqslant t\leqslant T}\left|X^{(n)}(t)-X^{(m)}(t)\right|^{\kappa}\right]\right)^{\frac{1}{\kappa}}\to0, \;\; \hbox{as}\; n, m\to\infty.
\end{equation}
\end{lemma}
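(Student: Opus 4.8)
The plan is to compare the approximating equations $X^{(n)}$ and $X^{(m)}$ directly, exploiting the Euler structure on a common refined partition. Fix $n,m\geqslant 1$ and let $Z(t):=X^{(n)}(t)-X^{(m)}(t)$. From \eqref{Approximation-Eq} we have
\begin{equation*}
Z(t)=\int_0^t\Delta b(s)\,ds+\int_0^t\Delta\sigma(s)\,dW(s)+\int_0^t\int_U\Delta h(s,z)\,\tilde N(ds,dz),
\end{equation*}
where $\Delta b(s)=b(s,X^{(n)}(s),\mu^{(n)}_{[s]_n})-b(s,X^{(m)}(s),\mu^{(m)}_{[s]_m})$ and similarly for $\Delta\sigma$, $\Delta h$. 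The first step is to split each increment into a \emph{state} part and a \emph{measure} part by inserting the intermediate terms $b(s,X^{(m)}(s),\mu^{(n)}_{[s]_n})$, etc.; the state difference will be handled by the one-sided local Lipschitz condition {\bf A1} (and its $\kappa$-order analogue in {\bf A7}), while the measure difference is controlled by {\bf A2} together with $W_2^2(\mu^{(n)}_{[s]_n},\mu^{(m)}_{[s]_m})\leqslant \mathbb{E}|X^{(n)}([s]_n)-X^{(m)}([s]_m)|^2$.

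Because {\bf A1} is only a \emph{local} condition, I would introduce the stopping times $\tau_R^{(n)}=\inf\{t:|X^{(n)}(t)|\geqslant R\}$ and $\tau_R=\tau_R^{(n)}\wedge\tau_R^{(m)}$. Applying It\^o's formula to $|Z(t\wedge\tau_R)|^\kappa$ (exactly as in the proof of Lemma \ref{UBP}, handling the drift via the one-sided bound, the Brownian and small-jump second-order terms via the $\|\cdot\|^2$ and $\int_U|\cdot|^2\nu(dz)$ bounds in {\bf A1}, and the large-jump remainder via the $\kappa$-order bound in {\bf A7}), then taking $\sup_{0\leqslant t\leqslant u}$, expectations, using Burkholder–Davis–Gundy and Kunita's inequality on the martingale parts, and absorbing the resulting $\tfrac12\mathbb{E}\sup|Z|^\kappa$ term, yields a Gr\"onwall-type inequality
\begin{equation*}
\mathbb{E}\sup_{0\leqslant t\leqslant u}|Z(t\wedge\tau_R)|^\kappa\leqslant C_R\int_0^u\Big(\mathbb{E}\sup_{0\leqslant r\leqslant s}|Z(r\wedge\tau_R)|^\kappa+\mathbb{E}|X^{(n)}([s]_n)-X^{(m)}([s]_m)|^\kappa\Big)ds+(\text{discretization error}),
\end{equation*}
where the discretization error collects the terms coming from replacing $X^{(n)}(s)$ by $X^{(n)}([s]_n)$ inside the coefficients and from the mismatch of the two grids; by Lemma \ref{HolderC} this error is bounded by $C_\kappa(\triangle_n+\triangle_m)^{1/2}$ (or a similar vanishing quantity) uniformly in $R$. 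A further application of Lemma \ref{HolderC} controls $\mathbb{E}|X^{(n)}([s]_n)-X^{(m)}([s]_m)|^\kappa$ by $\mathbb{E}\sup_{r\leqslant s}|Z(r)|^\kappa$ plus another $O((\triangle_n+\triangle_m)^{1/2})$ term, after which Gr\"onwall gives $\mathbb{E}\sup_{0\leqslant t\leqslant T}|Z(t\wedge\tau_R)|^\kappa\leqslant C_R\,(\triangle_n+\triangle_m)^{1/2}$.

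The last step is to remove the localization. Using Lemma \ref{UBP}, $\mathbb{P}(\tau_R<T)\leqslant \mathbb{P}(\sup_t|X^{(n)}(t)|\geqslant R)+\mathbb{P}(\sup_t|X^{(m)}(t)|\geqslant R)\leqslant 2C_r/R^r$, and since $r>\kappa$ one can interpolate: writing $\mathbb{E}\sup_t|Z(t)|^\kappa=\mathbb{E}[\,\cdot\,\mathbf{1}_{\tau_R\geqslant T}]+\mathbb{E}[\,\cdot\,\mathbf{1}_{\tau_R<T}]$, the first term is at most $\mathbb{E}\sup_t|Z(t\wedge\tau_R)|^\kappa\leqslant C_R(\triangle_n+\triangle_m)^{1/2}$, and the second is bounded via H\"older by $(\mathbb{E}\sup_t|Z(t)|^r)^{\kappa/r}\,\mathbb{P}(\tau_R<T)^{1-\kappa/r}\leqslant C\,(C_r/R^r)^{1-\kappa/r}$, which is independent of $n,m$ and tends to $0$ as $R\to\infty$. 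Choosing $R=R(\varepsilon)$ large to make the second term $<\varepsilon/2$, then $n,m$ large to make the first term $<\varepsilon/2$, establishes \eqref{Cauchy-se}. The main obstacle is the interaction between the local (not global) Lipschitz constant $L_R$ and the need for a bound uniform in $n,m$: one must be careful that $C_R$ does not depend on $n,m$ (it does not, since it depends only on $L_R$, $L_R'$, $r$, $\kappa$, $T$ and the universal BDG/Kunita constants) and that the discretization errors vanish uniformly before sending $R\to\infty$; the moment bound $r\geqslant\max\{\kappa^2/2,4\}>\kappa$ is exactly what powers the interpolation that decouples these two limits.
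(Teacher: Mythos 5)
Your proposal follows essentially the same route as the paper's proof: apply It\^o's formula to $|X^{(n)}-X^{(m)}|^{\kappa}$ stopped at $\tau_R$, split each coefficient increment into a state part (handled by {\bf A1}, {\bf A7}) and a measure part (handled by {\bf A2} and the Wasserstein--moment bound), control the martingale terms by Burkholder--Davis--Gundy and Kunita's inequalities, absorb the discretization error via Lemma \ref{HolderC}, apply Gr\"onwall, and then remove the localization using the uniform moment bound of Lemma \ref{UBP}, fixing $R$ first and then letting $n,m\to\infty$. The only (harmless) deviation is your treatment of the tail term by H\"older interpolation with exponents $\kappa/r$ and $1-\kappa/r$, where the paper uses Cauchy--Schwarz together with a fourth-moment Chebyshev estimate; both give a bound independent of $n,m$ that vanishes as $R\to\infty$, so the argument is sound.
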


\begin{proof}
Note that for $t\in[0,T]$,
\begin{align}
X^{(n)}(t)-X^{(m)}(t)=&\int_0^t\left[b\left(s,X^{(n)}(s),\mu^{(n)}_{[s]_{n}}\right)-b\left(s,X^{(m)}(s),\mu^{(m)}_{[s]_{m}}\right)\right]ds+\int_0^t\left[\sigma\left(s,X^{(n)}(s),\mu^{(n)}_{[s]_{n}}\right)-\sigma\left(s,X^{(m)}(s),\mu^{(m)}_{[s]_{m}}\right)\right]dW(s)\notag\\
&+\int_0^t\int_{U}\left[h\left(s,X^{(n)}(s),\mu^{(n)}_{[s]_{n}},z\right)-h\left(s,X^{(m)}(s),\mu^{(m)}_{[s]_{m}},z\right)\right]\tilde{N}(ds,dz).\notag
\end{align}
We define a stopping time
$$\tau_R:=\inf\left\{t\in[0,T]: \left|X^{(n)}(t)\right|\vee\left|X^{(m)}(t)\right|> R\right\},$$
for each $R>0$. It should be pointed out that the stopping time technique is applied here since (\ref{Approximation-Eq}) is a classical SDE (not distribution dependent). Then, by De Morgan's Law, we arrive at

\begin{equation}\label{JJ}
\mathbb{E}\left[\sup_{0\leqslant t\leqslant T}\left|X^{(n)}(t)-X^{(m)}(t)\right|^{\kappa}\right]=\mathbb{E}\Big[\sup_{0\leqslant t\leqslant T}\left|X^{(n)}(t)-X^{(m)}(t)\right|^{\kappa}\mathbb{I}_{\{\tau_R>T\}}\Big]+\mathbb{E}\Big[\sup_{0\leqslant t\leqslant T}\left|X^{(n)}(t)-X^{(m)}(t)\right|^{\kappa}\mathbb{I}_{\{\tau_R\leqslant T\}}\Big]:=J_1+J_2,
\end{equation}
where $\mathbb{I}_A$ is an indicator function of set $A$.

We next estimate each summand on the right hand side of equation above.

(1) Estimate of the term $J_1$. Notice that 
\begin{equation}\label{J_1}
J_1
\leqslant\mathbb{E}\left[\sup_{0\leqslant t\leqslant T}\left|X^{(n)}(t\wedge\tau_R)-X^{(m)}(t\wedge\tau_R)\right|^{\kappa}\right].
\end{equation}
By applying It\^o formula, we calculate that
$$
\left|X^{(n)}(t\wedge\tau_R)-X^{(m)}(t\wedge\tau_R)\right|^{\kappa}=J_{1,R}(t)+J_{2,R}(t)+J_{3,R}(t)+J_{4,R}(t)+J_{5,R}(t)+J_{6,R}(t),
$$
where 
\begin{align}
J_{1,R}(t)=&\kappa\int_0^{t\wedge\tau_R}\left|X^{(n)}(s)-X^{(m)}(s)\right|^{\kappa-2}\left\langle X^{(n)}(s)-X^{(m)}(s), b\left(s,X^{(n)}(s),\mu^{(n)}_{[s]_{n}}\right)-b\left(s,X^{(m)}(s),\mu^{(m)}_{[s]_{m}}\right)\right\rangle ds,\notag\\
J_{2,R}(t)=&\frac{\kappa}{2}\int_0^{t\wedge\tau_R}\left|X^{(n)}(s)-X^{(m)}(s)\right|^{\kappa-2}\left\| \sigma\left(s,X^{(n)}(s),\mu^{(n)}_{[s]_{n}}\right)-\sigma\left(s,X^{(m)}(s),\mu^{(m)}_{[s]_{m}}\right) \right\|^2 ds,\notag\\
J_{3,R}(t)=&\frac{\kappa(\kappa-2)}{2}\int_0^{t\wedge\tau_R}\left|X^{(n)}(s)-X^{(m)}(s)\right|^{\kappa-4}\left\| \left(X^{(n)}(s)-X^{(m)}(s)\right)^T\left(\sigma\left(s,X^{(n)}(s),\mu^{(n)}_{[s]_{n}}\right)-\sigma\left(s,X^{(m)}(s),\mu^{(m)}_{[s]_{m}}\right)\right) \right\|^2 ds,\notag\\
J_{4,R}(t)=&\int_0^{t}\int_U\bigg[\left|X^{(n)}(s)-X^{(m)}(s)+h\left(s,X^{(n)}(s),\mu^{(n)}_{[s]_n},z\right)-h\left(s,X^{(m)}(s),\mu^{(m)}_{[s]_{m}},z\right)\right|^{\kappa}-\left|X^{(n)}(s)-X^{(m)}(s)\right|^{\kappa}\notag\\
&-\kappa\left|X^{(n)}(s)-X^{(m)}(s)\right|^{\kappa-2}\left\langle X^{(n)}(s)-X^{(m)}(s), h\left(s,X^{(n)}(s),\mu^{(n)}_{[s]_n},z\right)-h\left(s,X^{(m)}(s),\mu^{(m)}_{[s]_{m}},z\right)\right\rangle \bigg] N(ds,dz),\notag\\
J_{5,R}(t)=&\kappa\int_0^{t\wedge\tau_R}\left|X^{(n)}(s)-X^{(m)}(s)\right|^{\kappa-2}\left\langle X^{(n)}(s)-X^{(m)}(s),\sigma\left(s,X^{(n)}(s),\mu^{(n)}_{[s]_{n}}\right)-\sigma\left(s,X^{(m)}(s),\mu^{(m)}_{[s]_{m}}\right)dW(s)\right\rangle,\notag\\
J_{6,R}(t)=&\kappa\int_0^{t\wedge\tau_R}\int_U\left|X^{(n)}(s)-X^{(m)}(s)\right|^{\kappa-2}\left\langle X^{(n)}(s)-X^{(m)}(s),h\left(s,X^{(n)}(s),\mu^{(n)}_{[s]_{n}},z\right)-h\left(s,X^{(m)}(s),\mu^{(m)}_{[s]_{m}},z\right)\right\rangle\tilde{N}(ds,dz).\notag
\end{align}
In order to take the suprema over the time and the expectation, we need to estimate $\mathbb{E}\big[\sup_{0\leqslant t\leqslant u}J_{i,R}(t)\big], i=1, \cdots, 6$, respectively. 

Note that $J_{i,R},i=1,2,3$ are standard Lebesgue integrals, we can estimate these three terms similarly. In fact, for any $u\in[0,T]$, by virtue of Assumptions {\bf A1-A2}, we obtain
\begin{align}
\mathbb{E}\left[\sup_{0\leqslant t\leqslant u}J_{1,R}(t)\right]\leqslant&\kappa\mathbb{E}\left[\sup_{0\leqslant t\leqslant u}\int_0^{t\wedge\tau_R}\left|X^{(n)}(s)-X^{(m)}(s)\right|^{\kappa-2}\left\langle X^{(n)}(s)-X^{(m)}(s), b\left(s,X^{(n)}(s),\mu^{(n)}_{[s]_{n}}\right)-b\left(s,X^{(m)}(s),\mu^{(n)}_{[s]_{n}}\right)\right\rangle
ds\right]\notag\\
&+\kappa\mathbb{E}\left[\int_0^{u\wedge\tau_R}\left|X^{(n)}(s)-X^{(m)}(s)\right|^{\kappa-1}\cdot\left|b\left(s,X^{(m)}(s),\mu^{(n)}_{[s]_{n}}\right)-b\left(s,X^{(m)}(s),\mu^{(m)}_{[s]_{m}}\right)\right|ds\right]\notag\\
\leqslant&\kappa L_R\mathbb{E}\left[\int_0^{u\wedge\tau_R}\left|X^{(n)}(s)-X^{(m)}(s)\right|^{\kappa}ds\right]+\kappa\sqrt{L}\mathbb{E}\left[\int_0^{u\wedge\tau_R}\left|X^{(n)}(s)-X^{(m)}(s)\right|^{\kappa-1}\cdot W_2\left(\mu^{(n)}_{[s]_{n}},\mu^{(m)}_{[s]_{m}}\right)ds\right],\notag
\end{align}
which on the application of Young's inequality (\ref{YE}) (with $\epsilon=1$, $p=\frac{\kappa}{\kappa-1}$, $q=\kappa$) yields
\begin{align}\label{J_{1,R}}
\mathbb{E}\left[\sup_{0\leqslant t\leqslant u}J_{1,R}(t)\right]\leqslant&\left(\kappa L_R+(\kappa-1)\sqrt{L}\right)\mathbb{E}\left[\int_0^{u\wedge\tau_R}\left|X^{(n)}(s)-X^{(m)}(s)\right|^{\kappa}ds\right]\notag\\
&+3^{\kappa-1}\sqrt{L}\int_0^{u\wedge\tau_R}\left[W_2^{\kappa}(\mu^{(n)}_{[s]_{n}},\mu^{(n)}_{s})+W_2^{\kappa}\left(\mu^{(n)}_{s},\mu^{(m)}_{s}\right)+W_2^{\kappa}\left(\mu^{(m)}_{s},\mu^{(m)}_{[s]_{m}}\right)\right]ds\notag\\
\leqslant&\left[\kappa L_R+\sqrt{L}\left(\kappa-1+3^{\kappa-1}\right)\right]\int_0^{u\wedge\tau_R}\mathbb{E}\left|X^{(n)}(s)-X^{(m)}(s)\right|^{\kappa}ds\notag\\
&+3^{\kappa-1}\sqrt{L}\int_0^{u\wedge\tau_R}\left(\mathbb{E}\left|X^{(n)}(s)-X^{(n)}([s]_{n})\right|^{\kappa}+\mathbb{E}\left|X^{(m)}(s)-X^{(m)}([s]_{m})\right|^{\kappa}\right)ds.
\end{align}
Analogously, by Assumptions {\bf A1-A2} and Young's inequality (\ref{YE}) (with $\epsilon=1$, $p=\frac{\kappa}{\kappa-2}$, $q=\frac{\kappa}{2}$) we can also obtain that
\begin{align}\label{J_{2,R}}
\mathbb{E}\left[\sup_{0\leqslant t\leqslant u}J_{2,R}(t)\right]
\leqslant&\left[\kappa L_R+L\left(\kappa-2+2\cdot3^{\kappa-1}\right)\right]\int_0^{u\wedge\tau_R}\mathbb{E}\left|X^{(n)}(s)-X^{(m)}(s)\right|^{\kappa}ds\notag\\
&+2L\cdot3^{\kappa-1}\int_0^{u\wedge\tau_R}\left(\mathbb{E}\left|X^{(n)}(s)-X^{(n)}([s]_{n})\right|^{\kappa}+\mathbb{E}\left|X^{(m)}(s)-X^{(m)}([s]_{m})\right|^{\kappa}\right)ds,
\end{align}
and 
\begin{align}\label{J_{3,R}}
\mathbb{E}\left[\sup_{0\leqslant t\leqslant u}J_{3,R}(t)\right]
\leqslant&(\kappa-2)\left[\kappa L_R+\left(\kappa-2+2\cdot3^{\kappa-1}\right)L\right]\int_0^{u\wedge\tau_R}\mathbb{E}\left|X^{(n)}(s)-X^{(m)}(s)\right|^{\kappa}ds\notag\\
&+2L\cdot3^{\kappa-1}(\kappa-2)\int_0^{u\wedge\tau_R}\left(\mathbb{E}\left|X^{(n)}(s)-X^{(n)}([s]_{n})\right|^{\kappa}+\mathbb{E}\left|X^{(m)}(s)-X^{(m)}([s]_{m})\right|^{\kappa}\right)ds.
\end{align}

As for the last three terms, we first use the remainder formula in (\ref{RF}) and Assumption {\bf A7} to get 
\begin{align}
\mathbb{E}\left[\sup_{0\leqslant t\leqslant u}J_{4,R}(t)\right]
\leqslant& C_1\mathbb{E}\int_0^{u\wedge\tau_R}\int_U\bigg(\left|X^{(n)}(s)-X^{(m)}(s)\right|^{\kappa-2}\left|h\left(s,X^{(n)}(s),\mu^{(n)}_{[s]_n},z\right)-h\left(s,X^{(m)}(s),\mu^{(m)}_{[s]_{m}},z\right)\right|^2\notag\\
&+\left|h\left(s,X^{(n)}(s),\mu^{(n)}_{[s]_n},z\right)-h\left(s,X^{(m)}(s),\mu^{(m)}_{[s]_{m}},z\right)\right|^{\kappa}\bigg)\nu(dz)ds.\notag\\
\leqslant&C_1\left(L_R+\frac{(\kappa-2+2\cdot3^{\kappa-1})L}{\kappa}\right)\int_0^{u\wedge\tau_R}\mathbb{E}\left|X^{(n)}(s)-X^{(m)}(s)\right|^{\kappa}ds\notag\\
&+2C_1L\cdot\frac{3^{\kappa-1}}{\kappa}\int_0^{u\wedge\tau_R}\left(\mathbb{E}\left|X^{(n)}(s)-X^{(n)}([s]_{n})\right|^{\kappa}+\mathbb{E}\left|X^{(m)}(s)-X^{(m)}([s]_{m})\right|^{\kappa}\right)ds\notag\\
&+C_1\mathbb{E}\int_0^{u\wedge\tau_R}\int_U\left|h\left(s,X^{(n)}(s),\mu^{(n)}_{[s]_n},z\right)-h\left(s,X^{(m)}(s),\mu^{(n)}_{[s]_{n}},z\right)\right|^{\kappa}\nu(dz)ds \notag\\
&+C_1\mathbb{E}\int_0^{u\wedge\tau_R}\int_U\left|h\left(s,X^{(m)}(s),\mu^{(n)}_{[s]_n},z\right)-h\left(s,X^{(m)}(s),\mu^{(m)}_{[s]_{m}},z\right)\right|^{\kappa}\nu(dz)ds\notag\\
\leqslant &C_1\left(L_R+\frac{(\kappa-2+2\cdot3^{\kappa-1})L}{\kappa}+L_R^{'}+L^{'}3^{\kappa-1}\right)\int_0^{u\wedge\tau_R}\mathbb{E}\left|X^{(n)}(s)-X^{(m)}(s)\right|^{\kappa}ds\notag\\
&+3^{\kappa-1}C_1\left(\frac{2L}{\kappa}+L^{'}\right)\int_0^{u\wedge\tau_R}\left(\mathbb{E}\left|X^{(n)}(s)-X^{(n)}([s]_{n})\right|^{\kappa}+\mathbb{E}\left|X^{(m)}(s)-X^{(m)}([s]_{m})\right|^{\kappa}\right)ds.
\end{align}
We exploit the Burkholder-Davis-Cundy inequality and Assumptions {\bf A2-A3} to obtain
\begin{align}
\mathbb{E}\left[\sup_{0\leqslant t\leqslant u}J_{5,R}(t)\right]\leqslant&\kappa\cdot\sqrt{32}\mathbb{E}\left(\int_0^{u\wedge\tau_R}\left|X^{(n)}(s)-X^{(m)}(s)\right|^{2\kappa-2}\cdot\left\|\sigma\left(s,X^{(n)}(s),\mu^{(n)}_{[s]_{n}}\right)-\sigma\left(s,X^{(m)}(s),\mu^{(m)}_{[s]_{m}}\right)\right\|^2ds\right)^{\frac{1}{2}}\notag\\
\leqslant&6\kappa\mathbb{E}\bigg[\sup_{0\leqslant t\leqslant u}\left|X^{(n)}(t\wedge\tau_R)-X^{(m)}(t\wedge\tau_R)\right|^{\kappa-1}\cdot\bigg(\int_0^{u\wedge\tau_R}2\left\|\sigma\left(s,X^{(n)}(s),\mu^{(n)}_{[s]_{n}}\right)-\sigma\left(s,X^{(m)}(s),\mu^{(n)}_{[s]_{n}}\right)\right\|^2\notag\\
&+2\left\|\sigma\left(s,X^{(m)}(s),\mu^{(n)}_{[s]_{n}}\right)-\sigma\left(s,X^{(m)}(s),\mu^{(m)}_{[s]_{m}}\right)\right\|^2ds\bigg)^{\frac{1}{2}}\bigg]\notag\\
\leqslant&6\kappa\mathbb{E}\left[\sup_{0\leqslant t\leqslant u}\left|X^{(n)}(t\wedge\tau_R)-X^{(m)}(t\wedge\tau_R)\right|^{\kappa-1}\cdot\left(\int_0^{u\wedge\tau_R}2L_R\left|X^{(n)}(s)-X^{(m)}(s)\right|^2+2LW_2^2\left(\mu^{(n)}_{[s]_{n}},\mu^{(m)}_{[s]_{m}}\right)ds\right)^{\frac{1}{2}}\right].\notag
\end{align}
 Then owing to Young's inequality (\ref{YE}) (with $\epsilon=\frac{1}{24(\kappa-1)}$, $p=\frac{\kappa}{\kappa-1}$, $q=\kappa$)
 , H\"older inequality,  elementary inequality and Lyapunov inequality, we further have
\begin{align}\label{J_{5,R}}
&\mathbb{E}\left[\sup_{0\leqslant t\leqslant u}J_{5,R}(t)\right]\notag\\
\leqslant& \frac{1}{4}\mathbb{E}\left[\sup_{0\leqslant t\leqslant u}\left|X^{(n)}(t\wedge\tau_R)-X^{(m)}(t\wedge\tau_R)\right|^{\kappa}\right]
+6^{\kappa}\cdot2^{\frac{\kappa}{2}}(4(\kappa-1))^{\kappa-1}\mathbb{E}\left(\int_0^{u\wedge\tau_R}L_R\left|X^{(n)}(s)-X^{(m)}(s)\right|^2+LW_2^2\left(\mu^{(n)}_{[s]_{n}},\mu^{(m)}_{[s]_{m}}\right)ds\right)^{\frac{\kappa}{2}}\notag\\
\leqslant &\frac{1}{4}\mathbb{E}\left[\sup_{0\leqslant t\leqslant u}\left|X^{(n)}(t\wedge\tau_R)-X^{(m)}(t\wedge\tau_R)\right|^{\kappa}\right]
+6^{\kappa}\cdot2^{\frac{\kappa}{2}}(4(\kappa-1))^{\kappa-1}u^{\frac{\kappa}{2}-1}\mathbb{E}\left[\int_0^{u\wedge\tau_R}\left(L_R\left|X^{(n)}(s)-X^{(m)}(s)\right|^2+LW_2^2\left(\mu^{(n)}_{[s]_{n}},\mu^{(m)}_{[s]_{m}}\right)\right)^{\frac{\kappa}{2}}ds\right]\notag\\
\leqslant & \frac{1}{4}\mathbb{E}\left[\sup_{0\leqslant t\leqslant u}\left|X^{(n)}(t\wedge\tau_R)-X^{(m)}(t\wedge\tau_R)\right|^{\kappa}\right]
+6^{\kappa}\cdot2^{\frac{\kappa}{2}}(4(\kappa-1))^{\kappa-1}u^{\frac{\kappa}{2}-1}\left(L_R^{\frac{\kappa}{2}}+L^{\frac{\kappa}{2}}3^{\kappa-1}\right)\int_0^{u\wedge\tau_R}\mathbb{E}\left|X^{(n)}(s)-X^{(m)}(s)\right|^{\kappa}ds\notag\\
&+6^{\kappa}\cdot(2L)^{\frac{\kappa}{2}}(12(\kappa-1))^{\kappa-1}u^{\frac{\kappa}{2}-1}\int_0^{u\wedge\tau_R}\left(\mathbb{E}\left|X^{(n)}(s)-X^{(n)}([s]_{n})\right|^{\kappa}+\mathbb{E}\left|X^{(m)}(s)-X^{(m)}([s]_{m})\right|^{\kappa}\right)ds.
\end{align}
Finally, we apply Kunita's first inequality (see, e.g., Lemma 2.1 of \cite{Dareiotis2016}) and Young's inequality (\ref{YE}) (with $\epsilon=\frac{1}{4D(\kappa-1)}$, $p=\frac{\kappa}{\kappa-1}$, $q=\kappa$) to obtain
\begin{align}\label{J_{6,R}}
&\mathbb{E}\left[\sup_{0\leqslant t\leqslant u}J_{6,R}(t)\right]\notag\\
\leqslant&\kappa D\mathbb{E}\left(\int_0^{u\wedge\tau_R}\int_U\left|X^{(n)}(s)-X^{(m)}(s)\right|^{2\kappa-2}\cdot\left|h\left(s,X^{(n)}(s),\mu^{(n)}_{[s]_{n}},z\right)-h\left(s,X^{(m)}(s),\mu^{(m)}_{[s]_{m}},z\right)\right|^2\nu(dz)ds\right)^{\frac{1}{2}}\notag\\
\leqslant & \frac{1}{4}\mathbb{E}\left[\sup_{0\leqslant t\leqslant u}\left|X^{(n)}(t\wedge\tau_R)-X^{(m)}(t\wedge\tau_R)\right|^{\kappa}\right]
+D^{\kappa}\cdot2^{\frac{\kappa}{2}}(4(\kappa-1))^{\kappa-1}u^{\frac{\kappa}{2}-1}\left(L_R^{\frac{\kappa}{2}}+L^{\frac{\kappa}{2}}3^{\kappa-1}\right)\int_0^{u\wedge\tau_R}\mathbb{E}\left|X^{(n)}(s)-X^{(m)}(s)\right|^{\kappa}ds\notag\\
&+D^{\kappa}\cdot(2L)^{\frac{\kappa}{2}}(12(\kappa-1))^{\kappa-1}u^{\frac{\kappa}{2}-1}\int_0^{u\wedge\tau_R}\left(\mathbb{E}\left|X^{(n)}(s)-X^{(n)}([s]_{n})\right|^{\kappa}+\mathbb{E}\left|X^{(m)}(s)-X^{(m)}([s]_{m})\right|^{\kappa}\right)ds.
\end{align}
Substituting (\ref{J_{1,R}})-(\ref{J_{6,R}}) into (\ref{J_1}) yields that 
\begin{align}
&\mathbb{E}\left[\sup_{0\leqslant t\leqslant u}\left|X^{(n)}(t\wedge\tau_R)-X^{(m)}(t\wedge\tau_R)\right|^2\right]\notag\\
\leqslant&2\Bigg[\kappa^2 L_R+(\kappa-1+3^{\kappa-1})\sqrt{L}+(\kappa-1)(\kappa-2+2\cdot3^{\kappa-1})L+C_1\left(L_R+\frac{(\kappa-2+2\cdot3^{\kappa-1})L}{\kappa}+L_R^{'}+L^{'}3^{\kappa-1}\right)\notag\\
&+(6^{\kappa}+D^{\kappa})2^{\frac{\kappa}{2}}(4(\kappa-1))^{\kappa-1}u^{\frac{\kappa}{2}-1}\left(L_R^{\frac{\kappa}{2}}+L^{\frac{\kappa}{2}}3^{\kappa-1}\right)\Bigg]\int_0^{u}\mathbb{E}\sup_{0\leqslant t\leqslant s}\left|X^{(n)}(t\wedge\tau_R)-X^{(m)}(t\wedge\tau_R)\right|^{\kappa}ds\notag\\
&+2\cdot3^{\kappa-1}\left[\sqrt{L}+2L(\kappa-1)+C_1\left(\frac{2L}{\kappa}+L^{'}\right)+(6^{\kappa}+D^{\kappa})(2L)^{\frac{\kappa}{2}}(4(\kappa-1))^{\kappa-1}u^{\frac{\kappa}{2}-1}\right]\notag\\
&\cdot\int_0^{u\wedge\tau_R}\left(\sup_{0\leqslant t\leqslant s}\mathbb{E}\left|X^{(n)}(t)-X^{(n)}([t]_{n})\right|^{\kappa}+\sup_{0\leqslant t\leqslant s}\mathbb{E}\left|X^{(m)}(t)-X^{(m)}([t]_{m})\right|^{\kappa}\right)ds.\notag
\end{align}
In addition, for any $t\in[0,T]$, the result in Lemma \ref{HolderC} implies that 
$$\mathbb{E}\left|X^{(n)}(t)-X^{(n)}([t]_{n})\right|^{\kappa}\leqslant Ch_n \;\; \hbox{and} \;\; \mathbb{E}\left|X^{(m)}(t)-X^{(m)}([t]_{n})\right|^{\kappa}\leqslant Ch_m.$$
By the above estimates, together with Gr\"onwall inequality, we conclude that 
\begin{align}\label{JJ-1}
J_1\leqslant&\mathbb{E}\left[\sup_{0\leqslant t\leqslant T}\left|X^{(n)}(t\wedge\tau_R)-X^{(m)}(t\wedge\tau_R)\right|^{\kappa}\right]\notag\\
\leqslant&2CT(h_n+h_m)\cdot3^{\kappa-1}\left[\sqrt{L}+2L(\kappa-1)+C_1\left(\frac{2L}{\kappa}+L^{'}\right)+(6^{\kappa}+D^{\kappa})(2L)^{\frac{\kappa}{2}}(4(\kappa-1))^{\kappa-1}T^{\frac{\kappa}{2}-1}\right]\notag\\
&\cdot e^{2T\left[\kappa^2 L_R+(\kappa-1+3^{\kappa-1})\sqrt{L}+(\kappa-1)(\kappa-2+2\cdot3^{\kappa-1})L+C_1\left(L_R+\frac{(\kappa-2+2\cdot3^{\kappa-1})L}{\kappa}+L_R^{'}+L^{'}3^{\kappa-1}\right)+(6^{\kappa}+D^{\kappa})2^{\frac{\kappa}{2}}(4(\kappa-1))^{\kappa-1}T^{\frac{\kappa}{2}-1}\left(L_R^{\frac{\kappa}{2}}+L^{\frac{\kappa}{2}}3^{\kappa-1}\right)\right]}\notag\\
:&=T(h_n+h_m)C(\kappa,T,L,L^{'})e^{T\cdot C(\kappa,T,L,L_R,L^{'},L_R^{'})}.
\end{align}

(2) Estimate of the term $J_2$. With aid of the Cauchy-Schwarz inequality, one can have
\begin{align}\label{J-2}
J_2=\mathbb{E}\left[\sup_{0\leqslant t\leqslant T}\left|X^{(n)}(t)-X^{(m)}(t)\right|^{\kappa}\mathbb{I}_{\{\tau_R\leqslant T\}}\right]
\leqslant&\sqrt{\mathbb{E}\left(\sup_{0\leqslant t\leqslant T}\left|X^{(n)}(t)-X^{(m)}(t)\right|^{\kappa}\right)^2}\sqrt{\mathbb{E}\left(\mathbb{I}_{\{\tau_R\leqslant T\}}\right)^2}\notag\\
\leqslant&2^{\kappa-\frac{1}{2}}\sqrt{\mathbb{E}\left[\sup_{0\leqslant t\leqslant T}\left|X^{(n)}(t)\right|^{2\kappa}+\sup_{0\leqslant t\leqslant T}\left|X^{(m)}(t)\right|^{2\kappa}\right]}\sqrt{\mathbb{P}\Big(\tau_R\leqslant T\Big)}\notag\\
\leqslant&C_1\sqrt{\mathbb{P}\Big(\tau_R\leqslant T\Big)}.
\end{align}
Here we have used the result of Lemma \ref{UBP}. Notice that, applying the subadditivity of probability and using Lemma \ref{UBP} again, we can estimate that 
\begin{equation*}
\mathbb{P}\Big(\tau_R\leqslant T\Big)
\leqslant\mathbb{E}\left(\mathbb{I}_{\{\tau_R\leqslant T\}}\frac{|X^{(n)}(\tau_R)|^{4}+|X^{(m)}(\tau_R)|^{4}}{R^4}\right)\leqslant\frac{1}{R^4}\left(\mathbb{E}\sup_{0\leqslant t\leqslant T}\left|X^{(n)}(t)\right|^4+\mathbb{E}\sup_{0\leqslant t\leqslant T}\left|X^{(m)}(t)\right|^4\right)\leqslant \frac{C}{R^4}.
\end{equation*}
By substituting this into (\ref{J-2}), we further obtain 
\begin{equation}\label{JJ-2}
J_2\leqslant \frac{C}{R^2}.
\end{equation}

At this point, we can estimate \eqref{JJ} by combining \eqref{JJ-1} and \eqref{JJ-2}:
\begin{equation}\label{JJ-1+2}
\mathbb{E}\sup_{0\leqslant t\leqslant T}\left|X^{(n)}(t)-X^{(m)}(t)\right|^\kappa\leqslant T(h_n+h_m)C(\kappa,T,L,L^{'})e^{T\cdot C(\kappa,T,L,L_R,L^{'},L_R^{'})}
+\frac{C}{R^2}.
\end{equation}
Note that $R$ is independent of $n$, $m$,  and $\frac{C}{R^2}$ converges to $0$ as $R\to\infty$. For any given $\varepsilon>0$, there exists a sufficiently large number $R(\varepsilon)>0$, such that, 
$$\frac{C}{R_{*}^2}<\frac{\varepsilon}{2},$$
when $R_{*}\geqslant R(\varepsilon)$. Since both $h_n$ and $h_m$ converge to 0 as $n,m\to\infty$, with the $\varepsilon>0$ chosen above, we have 
$$
T(h_n+h_m)C(\kappa,T,L,L^{'})e^{T\cdot C(\kappa,T,L,L_{R_*},L^{'},L_{R^{'}_*})}<\frac{\varepsilon}{2},
$$
by letting $n,m\to\infty$. Consequently, we conclude that \eqref{Cauchy-se} holds.
\end{proof}

\subsection{Proof of Theorem \ref{mainresult1}}


In this subsection, we turn to proving our main theorem in this section. The proof consists of three steps.

{\bf Step one: (Existence)} Let $\{X^{(n)}(t)\}_{n\geqslant1}$ be a Cauchy sequence in $L^{\kappa}(\Omega;D([0,T];\mathbb{R}^d))$ given by \eqref{Approximation-Eq}. Keep in mind that $L^{\kappa}(\Omega;D([0,T];\mathbb{R}^d))$ is a complete space under the $L^{\kappa}$ norm. Thus there exists an $\{\mathcal{F}_t\}_{0\leqslant t\leqslant T}$-adapted $\mathbb{R}^d$-valued c\`{a}dl\`{a}g stochastic process $\{X(t)\}_{0\leqslant t\leqslant T}$ with $X(0)=x_0$ and $\mu_t=\mathcal{L}_{X(t)}$ such that
\begin{equation}\label{L2C}
\lim_{n\to \infty}\left[\mathbb{E}\left|X^{(n)}(t)-X(t)\right|^{\kappa}\right]^{\frac{1}{\kappa}}\leqslant\lim_{n\to \infty}\left[\mathbb{E}\sup_{0\leqslant t\leqslant T}\left|X^{(n)}(t)-X(t)\right|^{\kappa}\right]^{\frac{1}{\kappa}}=0.
\end{equation}
We next prove that $\{X(t)\}_{0\leqslant t\leqslant T}$ is a solution to \eqref{Main-equation}. Indeed, the main idea is to show that the right-hand side of (\ref{Approximation-Eq}) converges in probability to
$$x_0+\int_0^tb(s,X(s),\mu_s)ds+\int_0^t\sigma(s,X(s),\mu_s)dW(s)+\int_0^t\int_Uh(s,X(s),\mu_s,z)\tilde{N}(dz,ds),$$
by taking the limit on both sides of (\ref{Approximation-Eq}). Here $\mu_s=\mathcal{L}(X(s))$ for any $s\in[0,T]$.

First of all, it follows from (\ref{L2C}) that, there exists a subsequence (for notational simplicity, still indexed by $n$) such that, for any $s\in[0,T]$, 
$$
X^{(n)}(s,\omega) \to X(s,\omega),\;\; \mathbb{P}\text{-a.s.}
$$
By Lemma \ref{HolderC}, the Wasserstein metric of $\mu^{(n)}_{[s]_n}$ and $\mu_s$ satisfies 
\begin{align}\label{md}
\lim_{n\to \infty}\sup_{0\leqslant s\leqslant t}W_{2}^{\kappa}\left(\mu^{(n)}_{[s]_n},\mu_s\right)&\leqslant2^{\kappa-1}\lim_{n\to \infty}\sup_{0\leqslant s\leqslant t}\mathbb{E}\left|X^{(n)}(s)-X^{(n)}([s]_n)\right|^{\kappa}+2^{\kappa-1}\lim_{n\to \infty}\mathbb{E}\left[\sup_{0\leqslant s\leqslant t}\left|X^{(n)}(s)-X(s)\right|^{\kappa}\right]\notag\\
&\leqslant2^{\kappa-1}C\lim_{n\to\infty}h_n=0.
\end{align}
Taking Assumption {\bf A6} into account, we immediately have that, for any $s\in[0,T]$ and almost all $\omega\in\Omega$, 
\begin{equation} 
b\left(s, X^{(n)}(s), \mu^{(n)}_{[s]_n}\right)\to b\left(s, X(s),\mu_s\right),\;
\sigma\left(s, X^{(n)}(s), \mu^{(n)}_{[s]_n}\right)\to \sigma\left(s, X(s),\mu_s\right),\notag
\end{equation}
\begin{equation}\label{Conver-1}
\int_Uh\left(s, X^{(n)}(s), \mu^{(n)}_{[s]_n},z\right)\nu(dz)\to\int_Uh\left(s,X(s),\mu_s,z\right)\nu(dz),
\end{equation}
as $n\to\infty$.

Next, we claim that $\{b(s, X^{(n)}(s), \mu^{(n)}_{[s]_n})\}_{n\geqslant1}$ and $\{\sigma(s, X^{(n)}(s), \mu^{(n)}_{[s]_n})\}_{n\geqslant1}$ are uniformly integrable. In fact, from 
Assumptions {\bf A4-A5} and Lemma \ref{UBP}, we obtain the following uniform boundedness: 
\begin{align}
    &\sup_{n\geqslant1}\mathbb{E}|b(s, X^{(n)}(s), \mu^{(n)}_{[s]_n})|
    \leqslant\sqrt{3K_1}\sup_{n\geqslant1}\mathbb{E}[1+|X^{(n)}(s)|^{\frac{\kappa}{2}}+W_2^{\frac{\kappa}{2}}( \mu^{(n)}_{[s]_n},\delta_0)]\leqslant \sqrt{3K_1}(1+2C), \notag\\
    &\sup_{n\geqslant1}\mathbb{E}\|\sigma(t, X^{(n)}(s), \mu^{(n)}_{[s]_n})\|^2
    \leqslant
    K\sup_{n\geqslant1}\mathbb{E}[1+|X^{(n)}(s)|^{2}+W_2^{2}( \mu^{(n)}_{[s]_n},\delta_0)]\leqslant K(1+2C),\notag
    \end{align}
    and the following uniform absolute continuity: 
    \begin{align}
    &\sup_{n\geqslant1}\mathbb{E}\left[\left|b\left(s, X^{(n)}(s), \mu^{(n)}_{[s]_n}\right)\right|\cdot \mathbb{I}_A\right]
    \leqslant K_1\sup_{n\geqslant1}\left[\mathbb{E}\left(1+\left|X^{(n)}(s)\right|^{\kappa}+W_2^{\kappa}\left( \mu^{(n)}_{[s]_n},\delta_0\right)\right)\right]^{\frac{1}{2}}(\mathbb{P}(A))^{\frac{1}{2}}\leqslant K_1\sqrt{1+2C}(\mathbb{P}(A))^{\frac{1}{2}}\to 0,\notag\\
    &\sup_{n\geqslant1}\mathbb{E}\left[\left\|\sigma\left(s, X^{(n)}(s), \mu^{(n)}_{[s]_n}\right)\right\|^2\cdot \mathbb{I}_A\right]
    \leqslant K\sup_{n\geqslant1}\left[\mathbb{E}\left(1+\left|X^{(n)}(s)\right|^{2}+W_2^{2}\left( \mu^{(n)}_{[s]_n},\delta_0\right)\right)^2\right]^{\frac{1}{2}}(\mathbb{P}(A))^{\frac{1}{2}}\leqslant K\sqrt{3(1+2C)}(\mathbb{P}(A))^{\frac{1}{2}}\to 0,\notag
\end{align}
when $\mathbb{P}(A)\to 0$.
Here, the elementary inequality 
\begin{equation}
\left(\sum_{i=1}^k|a_i|\right)^l\leqslant\left(k\max_{1\leqslant i\leqslant k}|a_i|\right)^l\leqslant k^l\sum_{i=1}^k|a_i|^l, \; \forall l>0, \; a_i\in\mathbb{R}, k\in\mathbb{N}, \notag
\end{equation}
has been used. The uniform integrability for $\{b(s, X^{(n)}(s), \mu^{(n)}_{[s]_n})\}_{n\geqslant1}$ as well as $\{\sigma(s, X^{(n)}(s), \mu^{(n)}_{[s]_n})\}_{n\geqslant1}$ follows by Lemma 3, page 190 of \cite{Shiryave1989}. 

Hence, the dominated convergence theorem (see, e.g., Theorem 4, page 188 of \cite{Shiryave1989}), together with (\ref{Conver-1}) yields, for any $s\in[0,T]$,
\begin{equation}\label{Eb1}
\lim_{n\to \infty}\mathbb{E}\left|b\left(s, X^{(n)}(s), \mu^{(n)}_{[s]_n}\right)-b\left(s, X(s),\mu_s\right)\right|=0,
\end{equation}
\begin{equation}\label{Esg1}
\lim_{n\to \infty}\mathbb{E}\left\|\sigma\left(s, X^{(n)}(s), \mu^{(n)}_{[s]_n}\right)-\sigma\left(s, X(s),\mu_s\right)\right\|^2=0.
\end{equation}

In addition, note that, following from (\ref{L2C}),
\begin{equation}
\mathbb{E}\left[\sup_{0\leqslant t\leqslant T}\left|X(t)\right|^{\kappa}\right]\leqslant C.\notag
\end{equation}
We further have the following estimates based on Assumptions {\bf A4-A5} and Lemma \ref{UBP},
\begin{align}\label{Eb2}
\sup_{n\geqslant1}\sup_{s\in[0,t]}\mathbb{E}\left|b\left(s, X^{(n)}(s), \mu^{(n)}_{[s]_n}\right)-b(s, X(s),\mu_s)\right|&\leqslant \sqrt{3K_1}\sup_{n\geqslant1}\sup_{s\in[0,t]}\mathbb{E}\left[2+\left|X^{(n)}(s)\right|^{\frac{\kappa}{2}}+|X(s)|^{\frac{\kappa}{2}}+W_2^{\frac{\kappa}{2}}\left( \mu^{(n)}_{[s]_n},\delta_0\right)+W_2^{\frac{\kappa}{2}}(\mu_s,\delta_0)\right]\notag\\
&\leqslant 2\sqrt{3K_1}(1+2C),
\end{align}
\begin{align}\label{Esg2}
\sup_{n\geqslant1}\sup_{s\in[0,t]}\mathbb{E}\left\|\sigma\left(s, X^{(n)}(s), \mu^{(n)}_{[s]_n}\right)-\sigma(s, X(s),\mu_s)\right\|^2&\leqslant 2K\sup_{n\geqslant1}\sup_{s\in[0,t]}\mathbb{E}\left[2+\left|X^{(n)}(s)\right|^{2}+|X(s)|^{2}+W_2^{\kappa}\left( \mu^{(n)}_{[s]_n},\delta_0\right)+W_2^{2}(\mu_t,\delta_0)\right]\notag\\
&\leqslant 4K(1+2C).
\end{align}
For any $t\in[0,T]$, by the dominated convergence combined with (\ref{Eb1}) and (\ref{Eb2}), we eventually obtain 
\begin{align}\label{Step-1-1}
\lim_{n\to \infty}\mathbb{E}\left|\int_0^t\left(b\left(s, X^{(n)}(s), \mu^{(n)}_{[s]_n}\right)-b(s, X(s),\mu_s)\right)ds\right|\leqslant\lim_{n\to \infty}\int_0^t\mathbb{E}\left|b\left(s, X^{(n)}(s), \mu^{(n)}_{[s]_n}\right)-b(s, X(s),\mu_s)\right|ds=0.
\end{align}
Similarly, in view of (\ref{Esg1}) and (\ref{Esg2}), we arrive at
\begin{align}\label{Step-1-2}
\lim_{n\to \infty}\mathbb{E}\left|\int_0^t\left(\sigma\left(s, X^{(n)}(s), \mu^{(n)}_{[s]_n}\right)-\sigma(s, X(s),\mu_s)\right)dW(s)\right|^2=\lim_{n\to \infty}\int_0^t\mathbb{E}\left\|\sigma\left(s, X^{(n)}(s), \mu^{(n)}_{[s]_n}\right)-\sigma(s, X(s),\mu_s)\right\|^2ds
=0.
\end{align}

Finally, we look into the estimates for the integral with respect to the Poisson
random measure. For any $u\in[0,T]$, it follows from Kunita's first inequality  and Assumptions {\bf A1-A2} that
\begin{align}
&\mathbb{E}\sup_{0\leqslant u\leqslant t}\left|\int_0^u\int_U\left(h\left(s,X^{(n)}(s),\mu^{(n)}_{[s]_n},z\right)-h(s,X(s),\mu_s,z)\right)\tilde{N}(ds,dz)\right|\notag\\
\leqslant& D\mathbb{E}\left[\int_0^t\int_U\left|h\left(s,X^{(n)}(s),\mu^{(n)}_{[s]_n},z\right)-h(s,X(s),\mu_s,z)\right|^2\nu(dz)ds\right]^{\frac{1}{2}}\notag\\
\leqslant&\sqrt{2}D\mathbb{E}\left[\int_0^t\int_U\left(\left|h\left(s,X^{(n)}(s),\mu^{(n)}_{[s]_n},z\right)-h(s,X(s),\mu^{(n)}_{[s]_n},z)\right|^2+\left|h\left(s,X(s),\mu^{(n)}_{[s]_n},z\right)-h(s,X(s),\mu_s,z)\right|^2\right)\nu(dz)ds\right]^{\frac{1}{2}}\notag\\
\leqslant&\sqrt{2}D\mathbb{E}\left[\int_0^t\left|X^{(n)}(s)-X(s)\right|^2ds+\int_0^tW_2^{2}\left(\mu^{(n)}_{[s]_n},\mu_s\right)ds\right]^{\frac{1}{2}}\notag\\
\leqslant&2D\mathbb{E}\left[\int_0^t\left|X^{(n)}(s)-X(s)\right|^2ds\right]^{\frac{1}{2}}+2D\mathbb{E}\left[\int_0^tW_2^{2}\left(\mu^{(n)}_{[s]_n},\mu_s\right)ds\right]^{\frac{1}{2}}\notag\\
\leqslant&2D\sqrt{T}\mathbb{E}\left[\sup_{0\leqslant s\leqslant t}\left|X^{(n)}(s)-X(s)\right|\right]+2D\sqrt{T}\sup_{0\leqslant s\leqslant t}W_2\left(\mu^{(n)}_{[s]_n},\mu_s\right).\notag\
\end{align}
By (\ref{L2C}), (\ref{md}) and Lyapunov inequality, we conclude that 
\begin{align}\label{Step-1-3}
\lim_{n\to\infty}\mathbb{E}\sup_{0\leqslant u\leqslant t}\left|\int_0^u\int_U\left(h\left(s,X^{(n)}(s),\mu^{(n)}_{[s]_n},z\right)-h(s,X(s),\mu_s,z)\right)\tilde{N}(ds,dz)\right|=0.
\end{align}

As a consequence, by \eqref{Step-1-1}, \eqref{Step-1-2} and \eqref{Step-1-3}, $\{X(t)\}_{0\leqslant t\leqslant T}$ is a strong solution to (\ref{Main-equation}). We are done with the existence.

{\bf Step two: (Boundedness).} For $t\in[0,T]$, let $X(t)\in L^{\kappa}(\Omega;D([0,T];\mathbb{R}^d))$ be any solution to equation (\ref{Main-equation}). In what follows, we calculate the $r$th moment of the solution $(X(t))_{0\leqslant t\leqslant T}$.

For every $R>0$, we define the stopping time 
$$\pi_R:=\inf\big\{t\in[0,T]:|X(t)|> R\big\}\wedge T.$$
It is clear that $|X(t)|\leqslant R$ for $0\leqslant t\leqslant \pi_R$. Using the same technique as Lemma \ref{UBP} was proved,  we calculate that for any $u\in[0,T]$,
 \begin{align}
&\mathbb{E}\sup_{0\leqslant t\leqslant  u\wedge\pi_R}|X(t)|^r\notag\\
\leqslant&\mathbb{E}|x_0|^r+\frac{1}{2}\mathbb{E}\sup_{0\leqslant t\leqslant u\wedge\pi_R}|X(t)|^r+K(r-2)\frac{r^2+r+2C_1}{2r}\int_0^u\left(1+\mathbb{E}\sup_{0\leqslant t\leqslant s\wedge\pi_R}|X(t)|^r\right)ds\notag\\
&+\left[2\cdot3^{\frac{r}{2}-1}(r+1)K+C_2^r2^{r}(r-1)^{r-1}(2K)^{\frac{r}{2}}(3T)^{\frac{r}{2}-1}+2C_1\cdot\left(\frac{2K}{r}3^{\frac{r}{2}-1}+K_2\right)\right]\int_0^u\left(1+\mathbb{E}\sup_{0\leqslant t\leqslant s\wedge\pi_R}|X(t)|^r\right)ds.\notag
\end{align} 
Notice that $\pi_R\to T,$ $\mathbb{P}$-a.s. Therefore, we can finish the proof of the estimate (\ref{UBeq}) by Gr\"onwall  inequality and the Fatou lemma. That is
 $$\mathbb{E}\sup_{0\leqslant t\leqslant T}|X(t)|^r\leqslant \liminf_{R\to\infty}\mathbb{E}\sup_{0\leqslant t\leqslant T\wedge\pi_R}|X(t)|^r\leqslant C_r<\infty.$$

{\bf Step three: (Uniqueness)} Let $X(t)$, $Y(t)$ be two solutions for  (\ref{Main-equation}) on the same probability space with $X(0)=Y(0)$. By (\ref{UBeq}), for a fixed $r\geqslant\max\{\kappa^2/2, 4\}$, there exists positive constant $C_r$ such that
$$\mathbb{E}\left[\sup_{0\leqslant t\leqslant T}|X(t)|^r\right]\leqslant C_r,\; \mathbb{E}\left[\sup_{0\leqslant t\leqslant T}|Y(t)|^r\right]\leqslant C_r.$$
For a sufficiently large $R>0$, we define the stopping time
$$\bar{\tau}_R:=\inf\big\{t\in[0,T]: |X(t)|\vee|Y(t)|\geqslant R\big\}.$$
Let us now replace $|X^{(n)}(t)-X^{(m)}(t)|$ and $\tau_R$ by $|X(t)-Y(t)|$ and $\bar{\tau}_R$, respectively. Then, applying the same proof process as the existence of solution to equation (\ref{Main-equation}) was proved, we have
\begin{align}
\mathbb{E}\left[\sup_{0\leqslant t\leqslant T}|X(t)-Y(t)|^{2}\right]&=\mathbb{E}\left[\sup_{0\leqslant t\leqslant T}|X(t)-Y(t)|^{2}\mathbb{I}_{\{\bar{\tau}_R>T\}}\right]+\mathbb{E}\left[\sup_{0\leqslant t\leqslant T}|X(t)-Y(t)|^{2}\mathbb{I}_{\{\bar{\tau}_R\leqslant T\}}\right]\notag\\
&\leqslant\mathbb{E}\left[\sup_{0\leqslant t\leqslant T}\left|X(t\wedge\bar{\tau}_R)-Y(t\wedge\bar{\tau}_R)\right|^{2}\right]+C_1\sqrt{\mathbb{P}\Big(\bar{\tau}_R\leqslant T\Big)}\leqslant\frac{C}{R^2}.\notag
\end{align}
Letting $R\to\infty$, it gives the uniqueness of solution to equation (\ref{Main-equation}).

This completes the proof of Theorem \ref{mainresult1}.

%

\renewcommand{\theequation}{\thesection.\arabic{equation}}
\setcounter{equation}{0}

\section{Stochastic averaging principle}\label{sec3}
In this section, we establish a stochastic averaging principle for the following stochastic integral equation
\begin{align} \label{AMain}
 X_{\varepsilon}(t)=&x_0+\int_0^t b\left(\frac{s}{\varepsilon },X_{\varepsilon}(s),\mathscr{L}_{X_{\varepsilon}(s)}\right)ds+\int_0^t\sigma\left(\frac{s}{\varepsilon },X_{\varepsilon}(s),\mathscr{L}_{X_{\varepsilon}(s)}\right)dW(s)\notag\\
  &+\int_0^t\int_{U}h\left(\frac{s}{\varepsilon },X_{\varepsilon}(s),\mathscr{L}_{X_{\varepsilon}(s)},z\right)\tilde{N}(ds,dz),\;\;\;t\in[0,T],
\end{align}
where $\varepsilon$ is a positive small parameter ($0<\varepsilon\ll1$). Assuming \eqref{AMain} fulfills the conditions in Assumptions {\bf A1-A7}, the existence and uniqueness of its solution follows immediately from Theorem \ref{mainresult1}. 

As mentioned in the Introduction, our main objection is to show that the solution (i.e., $X^{\varepsilon}(t)$, $t\in[0,T]$) of \eqref{AMain} could be approximated by some other simpler (or averaged) process in an appropriate sense. To proceed, 
we associate (\ref{AMain}) with the following averaged McKean-Vlasov SDE:
\begin{align} \label{ASDE}
 \bar{X}(t)=x_0+\int_0^t \bar{b}\left(\bar{X}(s),\mathscr{L}_{\bar{X}(s)}\right)ds+\int_0^t\bar{\sigma}\left(\bar{X}(s),\mathscr{L}_{\bar{X}(s)}\right)dW(s)+\int_0^t\int_{U}\bar{h}\left(\bar{X}(s),\mathscr{L}_{\bar{X}(s)},z\right)\tilde{N}(ds,dz),\;\;\;t\in[0,T],
\end{align}
where $\bar{b}: \mathbb{R}^d\times M_2(\mathbb{R}^d)\to \mathbb{R}^d$, $\bar{\sigma}: \mathbb{R}^d\times M_2(\mathbb{R}^{d})\to\mathbb{R}^{d\times m}$ and $\bar{h}: \mathbb{R}^d\times M_2(\mathbb{R}^d)\times U\to \mathbb{R}^d$ are Borel measurable functions. To ensure that (\ref{ASDE}) also admits a unique solution and to apply the stochastic averaging arguments, we need to make use of the following averaging conditions. We point out that such conditions are slightly different from the classical conditions (see, e.g., \cite{Xu2011,Shen2022}) due to the type of nonlinearity terms.

\noindent{\bf A8. } (Averaging conditions) There exist positive bounded functions (sometimes known as rate functions of convergence)  $\varphi_i(t)$ ($t\in[0,T]$) with $\lim_{t\to\infty}\varphi_i(t)=0$, $i=1, 2, 3$, such that 
$$\frac{1}{t}\int_{0}^{t}|b(s,x,\mu)-\bar{b}(x,\mu)|^2ds\leqslant\varphi_1(t)C_R^b(1+|x|^2),\;\;\;\; \frac{1}{t}\int_{0}^{t}\|\sigma(s,x,\mu)-\bar{\sigma}(x,\mu)\|^2ds\leqslant\varphi_2(t)C_R^{\sigma}(1+|x|^2),$$
$$\frac{1}{t}\int_{0}^{t}\int_{U}|h(s,x,\mu,z)-\bar{h}(x,\mu,z)|^2\nu(dz)ds\leqslant\varphi_3(t)C_R^{h}(1+|x|^2),$$
respectively, for any $t\in[0,T]$, $x,y\in\mathbb{R}^d$ with $|x|\vee|y|\leqslant R$, and $\mu\in\mathcal{M}_2(\mathbb{R}^d)$.\par

Furthermore, if $\kappa>2$, we need an additional condition.\\
\noindent{\bf A9. }(Additional averaging conditions w.r.t. the jump coefficients) There exist a positive bounded function $\varphi(t)$ ($t\in[0,T]$) with $\lim_{t\to\infty}\varphi(t)=0$ such that 
$$\frac{1}{t}\int_{0}^{t}\int_{U}|h(s,x,\mu,z)-\bar{h}(x,\mu,z)|^r\nu(dz)ds\leqslant\varphi(t)C_R^{h}(1+|x|^r),$$
$$\frac{1}{t}\int_{0}^{t}\int_{U}|h(s,x,\mu,z)-\bar{h}(x,\mu,z)|^{\kappa}\nu(dz)ds\leqslant\varphi(t)C_R^{h}(1+|x|^{\kappa}),$$
respectively, for any $t\in[0,T]$, $x,y\in\mathbb{R}^d$ with $|x|\vee|y|\leqslant R$, and $\mu\in\mathcal{M}_2(\mathbb{R}^d)$.

The main theorem on an averaging principle for (\ref{AMain}) is thus formulated below.
\begin{theorem}\label{Th} {\bf (An Averaging Principle)}
Suppose that Assumptions {\bf A1-A9} hold. Then, we have the following averaging principle
\begin{equation}
\lim_{\varepsilon\to0}\mathbb{E}\sup_{0\leqslant t\leqslant T}|X_{\varepsilon}(t)-\bar{X}(t)|^{2}=0.
\end{equation}
\end{theorem}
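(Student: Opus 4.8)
The plan is to combine the stopping-time localization used in the proof of Theorem~\ref{mainresult1} with Khasminskii's classical time-averaging argument, adapted to the jump and distribution-dependent setting. As a preliminary step I would check that the averaged coefficients $\bar b,\bar\sigma,\bar h$ inherit Assumptions \textbf{A1}--\textbf{A7} from their time-dependent counterparts together with \textbf{A8}--\textbf{A9} (writing $\bar b(x,\mu)$ as a time-average of $b(\cdot,x,\mu)$, the estimates in \textbf{A1}, \textbf{A2}, \textbf{A4}, \textbf{A5} pass to the limit, and the $\kappa$-order bounds on $\bar h$ follow from \textbf{A7} and \textbf{A9}), so that \eqref{ASDE} admits a unique solution with $\mathbb{E}\sup_{0\le t\le T}|\bar X(t)|^r\le C$ by Theorem~\ref{mainresult1}. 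Running the computations of Lemmas~\ref{UBP} and \ref{HolderC} verbatim — using that the growth assumptions \textbf{A4}, \textbf{A5}, \textbf{A7} are uniform in the time variable, hence insensitive to the rescaling $s\mapsto s/\varepsilon$ — gives the uniform-in-$\varepsilon$ bounds $\sup_{0<\varepsilon\le1}\mathbb{E}\sup_{0\le t\le T}|X_\varepsilon(t)|^r\le C$ and $\sup_{0<\varepsilon\le1}\mathbb{E}|X_\varepsilon(t)-X_\varepsilon(s)|^2\le C|t-s|$ for $|t-s|\le1$. Fixing $R>0$ and setting $\tau_R:=\inf\{t\in[0,T]:|X_\varepsilon(t)|\vee|\bar X(t)|>R\}$, I split, exactly as in \eqref{JJ},
\begin{equation*}
\mathbb{E}\sup_{0\le t\le T}|X_\varepsilon(t)-\bar X(t)|^2=\mathbb{E}\Big[\sup_{0\le t\le T}|X_\varepsilon(t)-\bar X(t)|^2\mathbb{I}_{\{\tau_R>T\}}\Big]+\mathbb{E}\Big[\sup_{0\le t\le T}|X_\varepsilon(t)-\bar X(t)|^2\mathbb{I}_{\{\tau_R\le T\}}\Big]=:I_1+I_2,
\end{equation*}
where $I_2\le C/R^2$ uniformly in $\varepsilon$ by Cauchy--Schwarz, Chebyshev's inequality and the uniform moment bound, precisely as in \eqref{J-2}--\eqref{JJ-2}.

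For the term $I_1$ I would estimate $\mathbb{E}\sup_{0\le t\le u}|X_\varepsilon(t\wedge\tau_R)-\bar X(t\wedge\tau_R)|^2$. Subtracting \eqref{ASDE} from \eqref{AMain} and inserting $\pm\bar b(X_\varepsilon(s),\mathscr L_{X_\varepsilon(s)})$, $\pm\bar\sigma(X_\varepsilon(s),\mathscr L_{X_\varepsilon(s)})$, $\pm\bar h(X_\varepsilon(s),\mathscr L_{X_\varepsilon(s)},z)$ in the three integrands splits each integrand into a \emph{regular part} and an \emph{averaging part}. In the regular part the differences $\bar b(X_\varepsilon(s),\mathscr L_{X_\varepsilon(s)})-\bar b(\bar X(s),\mathscr L_{\bar X(s)})$, and its $\bar\sigma$- and $\bar h$-analogues, are controlled on $\{\tau_R>T\}$ by the inherited \textbf{A1}--\textbf{A2} through $L_R|X_\varepsilon(s)-\bar X(s)|^2+LW_2^2(\mathscr L_{X_\varepsilon(s)},\mathscr L_{\bar X(s)})$ together with $W_2^2(\mathscr L_{X_\varepsilon(s)},\mathscr L_{\bar X(s)})\le\mathbb{E}|X_\varepsilon(s)-\bar X(s)|^2$, the last expectation being reduced to $\mathbb{E}\sup_{0\le r\le s}|X_\varepsilon(r\wedge\tau_R)-\bar X(r\wedge\tau_R)|^2$ up to a $C/R^2$ error exactly as in Step three of the proof of Theorem~\ref{mainresult1}; the diffusion and jump stochastic integrals are handled by the Burkholder--Davis--Gundy and Kunita inequalities as in the proof of Lemma~\ref{Cauchy}. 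Hence the regular part produces a term of the form $C_R\int_0^u\mathbb{E}\sup_{0\le r\le s}|X_\varepsilon(r\wedge\tau_R)-\bar X(r\wedge\tau_R)|^2\,ds+C/R^2$, which is ready for a Gr\"onwall argument.

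The main obstacle is the averaging part, namely showing that
\begin{equation*}
\mathbb{E}\sup_{0\le t\le u}\Big|\int_0^{t\wedge\tau_R}\big[b(s/\varepsilon,X_\varepsilon(s),\mathscr L_{X_\varepsilon(s)})-\bar b(X_\varepsilon(s),\mathscr L_{X_\varepsilon(s)})\big]ds\Big|^2
\end{equation*}
and its $\sigma$- and $h$-analogues (the latter two first reduced, via Burkholder--Davis--Gundy and Kunita, to Lebesgue integrals of $\|\sigma-\bar\sigma\|^2$ and $\int_U|h-\bar h|^2\nu(dz)$) tend to $0$ as $\varepsilon\to0$ for each fixed $R$. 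Here I would invoke Khasminskii's time-discretization: choose a mesh $\delta=\delta(\varepsilon)$ with $\delta\to0$ and $\delta/\varepsilon\to\infty$ as $\varepsilon\to0$, partition $[0,T]$ into blocks $[k\delta,(k+1)\delta)$, and on each block replace $(X_\varepsilon(s),\mathscr L_{X_\varepsilon(s)})$ by the frozen data $(X_\varepsilon(k\delta),\mathscr L_{X_\varepsilon(k\delta)})$. The \emph{freezing error} is controlled on $\{\tau_R>T\}$ by the local Lipschitz condition \textbf{A1} (for $b$, resp. $\sigma$, $h$), by \textbf{A2}, and by the time-H\"older estimate $\mathbb{E}|X_\varepsilon(s)-X_\varepsilon(k\delta)|^2\le C\delta$, and contributes a term of order $C_R\delta$. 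On a frozen block, the change of variables $u=s/\varepsilon$ turns the integral into $\varepsilon\int_{k\delta/\varepsilon}^{(k+1)\delta/\varepsilon}\big[b(u,x,\mu)-\bar b(x,\mu)\big]du$ with $x=X_\varepsilon(k\delta)$, $\mu=\mathscr L_{X_\varepsilon(k\delta)}$ fixed, and Cauchy--Schwarz together with the averaging condition \textbf{A8} bounds this block by a quantity of order $\delta\,(\varphi_1(\cdot)C_R^b(1+R^2))^{1/2}$, the argument of $\varphi_1$ ranging over $[\delta/\varepsilon,T/\varepsilon]$; since $\sup_{t\ge\delta/\varepsilon}\varphi_1(t)\to0$, summing over the $\lceil T/\delta\rceil$ blocks gives a contribution that vanishes as $\varepsilon\to0$ (the admissible relation between $\delta$ and $\varepsilon$ being dictated by the decay rate of the $\varphi_i$), and the $\sigma$- and $h$-blocks are treated identically via $\varphi_2,\varphi_3$ and, when $\kappa>2$, via \textbf{A9}. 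Collecting the regular and averaging parts, Gr\"onwall's inequality yields, on $\{\tau_R>T\}$, a bound $I_1\le C_R\big(\delta(\varepsilon)+\rho(\varepsilon)\big)e^{C_RT}$ with $\rho(\varepsilon)\to0$; combining with $I_2\le C/R^2$ and letting first $\varepsilon\to0$ and then $R\to\infty$ gives $\lim_{\varepsilon\to0}\mathbb{E}\sup_{0\le t\le T}|X_\varepsilon(t)-\bar X(t)|^2=0$. The delicate points — the genuinely new ones compared with the Brownian, globally Lipschitz averaging literature — are keeping every constant independent of $\varepsilon$ through the rescaling $s\mapsto s/\varepsilon$, coordinating the stopping-time localization (forced by the merely local \textbf{A1} and the possibly super-linear drift) with the Khasminskii block decomposition, and pushing the jump terms through both the Kunita estimates and the averaging conditions \textbf{A8}--\textbf{A9}.
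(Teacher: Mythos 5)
Your overall skeleton matches the paper's proof: the stopping time $\eta_R$ (forced by the one-sided local condition), the split into $I_1+I_2$ with $I_2\leqslant C/R^2$ via Cauchy--Schwarz, Chebyshev and the uniform $r$-th moment bounds from Theorem \ref{mainresult1}, the verification that $\bar b,\bar\sigma,\bar h$ inherit \textbf{A1}--\textbf{A7} (the paper's Lemma \ref{lemma3-1}), It\^o/Burkholder--Davis--Gundy/Kunita estimates for the ``regular'' differences, a Gr\"onwall argument, and finally $\varepsilon\to0$ followed by $R\to\infty$. Where you genuinely diverge is the averaging term. The paper does not use Khasminskii's time discretization at all: inside each $\Lambda_i$ it inserts the intermediate terms $b(s/\varepsilon,\bar X(s),\mathscr{L}_{X_\varepsilon(s)})$ and $b(s/\varepsilon,\bar X(s),\mathscr{L}_{\bar X(s)})$ (and the $\sigma$-, $h$-analogues), so that the averaging difference is $b(s/\varepsilon,\bar X(s),\mathscr{L}_{\bar X(s)})-\bar b(\bar X(s),\mathscr{L}_{\bar X(s)})$; this is paired with $|X_\varepsilon(s)-\bar X(s)|$ via Young's inequality, the change of variables $s\mapsto s\varepsilon$ is performed once over $[0,(u\wedge\eta_R)/\varepsilon]$, and \textbf{A8}--\textbf{A9} are applied directly, the rates $\varphi_i((T\wedge\eta_R)/\varepsilon)$ entering the final bound \eqref{II-1}. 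Your insertion of $\pm\bar b(X_\varepsilon(s),\mathscr{L}_{X_\varepsilon(s)})$ instead places the averaging difference along the fast process $X_\varepsilon$ and then tries to freeze it on blocks of length $\delta(\varepsilon)$.

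There is a concrete gap in that route: the freezing step for the drift cannot be carried out under \textbf{A1}. You assert that the freezing error is ``controlled by the local Lipschitz condition \textbf{A1} (for $b$, resp.\ $\sigma$, $h$)'', but for $b$ Assumption \textbf{A1} is only one-sided: it bounds $\langle x-y,\,b(t,x,\mu)-b(t,y,\mu)\rangle$, not $|b(t,x,\mu)-b(t,y,\mu)|$. The paper's own Remark 2.1(i) example, $b=x^3-x^{1/3}+t+\int z\,\mu(dz)$, is one-sided locally Lipschitz but not locally Lipschitz, so $\mathbb{E}\big|b(s/\varepsilon,X_\varepsilon(s),\cdot)-b(s/\varepsilon,X_\varepsilon(k\delta),\cdot)\big|^2$ cannot be bounded by $C_R\,\mathbb{E}|X_\varepsilon(s)-X_\varepsilon(k\delta)|^2\leqslant C_R\delta$; neither \textbf{A3} nor \textbf{A5} supplies a quantitative modulus of continuity in the state variable. (For $\sigma$ and $h$ the conditions \textbf{A1}, \textbf{A7} are two-sided, so those blocks are fine.) A secondary problem is quantitative: \textbf{A8}--\textbf{A9} only control time averages over $[0,t]$ and come with no decay rate for the $\varphi_i$, so the per-block bound over $[k\delta/\varepsilon,(k+1)\delta/\varepsilon]$ and the summation over the $\lceil T/\delta\rceil$ blocks do not obviously vanish by ``choosing $\delta(\varepsilon)$ according to the decay rate'' --- no rate is assumed. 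Both difficulties disappear if you restructure the insertion as the paper does: keep the averaging difference evaluated along a single process, pair it with $|X_\varepsilon-\bar X|$ by Young's inequality, and apply \textbf{A8}--\textbf{A9} once on the whole interval after the change of variables, so that no increment of $b$ in the state variable and no block decomposition is ever needed.
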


Moreover, by Theorem \ref{Th} and Chebyshev-Markov inequality, we have the following corollary.

\begin{corollary}
The original solution $X_{\varepsilon}(t)$ converges in probability to the averaged solution $\bar{X}(t)$, that is, for any given number $\delta>0$,
\begin{equation*}
\mathbb{P}(\sup_{0\leqslant t\leqslant T}\left|X_{\varepsilon}(t)-\bar{X}(t)\right|>\delta)\to 0,
\end{equation*}
as $\varepsilon\to 0$.
\end{corollary}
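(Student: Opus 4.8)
The plan is to obtain the convergence in probability as an immediate consequence of the mean-square convergence already secured in Theorem \ref{Th}, using nothing more than the Chebyshev-Markov inequality. First I would fix an arbitrary $\delta>0$ and introduce the nonnegative random variable $\Xi_{\varepsilon}:=\sup_{0\leqslant t\leqslant T}|X_{\varepsilon}(t)-\bar{X}(t)|$. Since both $X_{\varepsilon}$ and $\bar{X}$ are $\{\mathcal{F}_t\}$-adapted c\`adl\`ag processes, the map $t\mapsto|X_{\varepsilon}(t)-\bar{X}(t)|$ is itself c\`adl\`ag on the compact interval $[0,T]$, so its supremum is attained along a countable dense subset of $[0,T]$; this guarantees that $\Xi_{\varepsilon}$ is a genuine (measurable) random variable and that the event $\{\Xi_{\varepsilon}>\delta\}$ is well-defined and $\mathcal{F}_T$-measurable.

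The core of the argument is then a single application of Markov's inequality to the square $\Xi_{\varepsilon}^2$. Because $\{\Xi_{\varepsilon}>\delta\}=\{\Xi_{\varepsilon}^2>\delta^2\}$, I would estimate
\begin{equation*}
\mathbb{P}\Big(\sup_{0\leqslant t\leqslant T}\big|X_{\varepsilon}(t)-\bar{X}(t)\big|>\delta\Big)=\mathbb{P}\big(\Xi_{\varepsilon}^2>\delta^2\big)\leqslant\frac{1}{\delta^2}\,\mathbb{E}\big[\Xi_{\varepsilon}^2\big]=\frac{1}{\delta^2}\,\mathbb{E}\sup_{0\leqslant t\leqslant T}\big|X_{\varepsilon}(t)-\bar{X}(t)\big|^2.
\end{equation*}
Here $\delta$ is held fixed throughout, so the factor $\delta^{-2}$ is a harmless constant; the entire $\varepsilon$-dependence is carried by the mean-square quantity on the right.

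It then remains only to let $\varepsilon\to0$. By Theorem \ref{Th} we have $\mathbb{E}\sup_{0\leqslant t\leqslant T}|X_{\varepsilon}(t)-\bar{X}(t)|^2\to0$ as $\varepsilon\to0$, whence the right-hand side above tends to $0$ for every fixed $\delta>0$. Since $\delta>0$ was arbitrary, this establishes $\mathbb{P}(\sup_{0\leqslant t\leqslant T}|X_{\varepsilon}(t)-\bar{X}(t)|>\delta)\to0$, which is precisely the assertion of the corollary. I do not anticipate any substantive obstacle: the only points requiring a word of care are the measurability of the pathwise supremum (handled by c\`adl\`ag regularity) and the order of quantifiers, namely that $\delta$ must be frozen before the limit in $\varepsilon$ is taken; the implication ``$L^2$ convergence $\Rightarrow$ convergence in probability'' is otherwise entirely standard.
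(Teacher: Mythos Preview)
Your proposal is correct and matches the paper's approach exactly: the paper states the corollary as an immediate consequence of Theorem \ref{Th} together with the Chebyshev--Markov inequality, which is precisely what you carry out.
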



Prior to prove Theorem \ref{Th}, the well-posedness of equation (\ref{ASDE}) should be considered. The following lemma shows that it can be assured.

\begin{lemma}\label{lemma3-1}
There is a unique solution $\bar{X}(t)$ to the averaged equation (\ref{ASDE}) under Assumptions {\bf A1-A9}.
\end{lemma}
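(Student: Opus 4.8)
\emph{Proof proposal.} The averaged equation \eqref{ASDE} is itself a L\'evy-type McKean-Vlasov SDE of the shape \eqref{Main-equation}, the only difference being that its coefficients $\bar b,\bar\sigma,\bar h$ do not depend on time and the initial datum is the same $x_0$; such a time-homogeneous equation is a special case of \eqref{Main-equation}. Hence the plan is simply to verify that the triple $(\bar b,\bar\sigma,\bar h)$ satisfies Assumptions {\bf A1}--{\bf A7} and then to invoke Theorem \ref{mainresult1}; Assumption {\bf A6} involves only $x_0$ and is inherited at once. The key link between the hypotheses on $(b,\sigma,h)$ and those on $(\bar b,\bar\sigma,\bar h)$ is the convergence of ergodic-type time averages, which follows from {\bf A8}--{\bf A9} by Jensen's inequality: for every fixed $x\in\mathbb R^d$, $\mu\in\mathcal M_2(\mathbb R^d)$ with $|x|\le R$,
\[
\Big|\tfrac1t\int_0^t b(s,x,\mu)\,ds-\bar b(x,\mu)\Big|^2\le\tfrac1t\int_0^t|b(s,x,\mu)-\bar b(x,\mu)|^2\,ds\le\varphi_1(t)C_R^b(1+|x|^2)\longrightarrow 0
\]
as $t\to\infty$, and likewise $\tfrac1t\int_0^t\sigma(s,x,\mu)\,ds\to\bar\sigma(x,\mu)$ in the Frobenius norm and $\tfrac1t\int_0^t h(s,x,\mu,\cdot)\,ds\to\bar h(x,\mu,\cdot)$ in $L^2(U,\nu)$, with the analogous convergences in $L^r(U,\nu)$ and $L^\kappa(U,\nu)$ supplied by {\bf A9} when $\kappa>2$.

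Granted these convergences, I would transfer the structural bounds by averaging the corresponding inequality for $(b,\sigma,h)$ over $[0,t]$ and passing to the limit $t\to\infty$, using convexity/Jensen and, for the jump term, Fubini's theorem in the $z$-variable. For instance, for {\bf A1} with $|x|\vee|y|\le R$ one obtains $\langle x-y,\bar b(x,\mu)-\bar b(y,\mu)\rangle=\lim_t\tfrac1t\int_0^t\langle x-y,b(s,x,\mu)-b(s,y,\mu)\rangle\,ds\le L_R|x-y|^2$, while
\[
\|\bar\sigma(x,\mu)-\bar\sigma(y,\mu)\|^2\le\liminf_{t\to\infty}\tfrac1t\int_0^t\|\sigma(s,x,\mu)-\sigma(s,y,\mu)\|^2\,ds\le L_R|x-y|^2,
\]
and, pointwise in $z$ then integrated against $\nu$, $\int_U|\bar h(x,\mu,z)-\bar h(y,\mu,z)|^2\nu(dz)\le\liminf_t\tfrac1t\int_0^t\int_U|h(s,x,\mu,z)-h(s,y,\mu,z)|^2\nu(dz)\,ds\le L_R|x-y|^2$; since each of the three terms is bounded by $L_R|x-y|^2$, so is their maximum. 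The same scheme yields {\bf A2} (with $W_2^2(\mu_1,\mu_2)$ in place of $|x-y|^2$), {\bf A4} and {\bf A5} (e.g.\ $\langle x,\bar b(x,\mu)\rangle\le K(1+|x|^2+W_2^2(\mu,\delta_0))$ and $|\bar b(x,\mu)|^2\le K_1(1+|x|^\kappa+W_2^\kappa(\mu,\delta_0))$), and {\bf A7} using the $L^r$- and $L^\kappa$-convergences from {\bf A9}; in particular the growth bounds show $\bar h(x,\mu,\cdot)\in L^2(U,\nu)$, so $\int_U\bar h(\cdot,\cdot,z)\nu(dz)$ is a well-defined function.

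It remains to establish the continuity requirement {\bf A3} for $\bar b,\bar\sigma$ and $\int_U\bar h(\cdot,\cdot,z)\nu(dz)$, which I regard as the main obstacle, since one-sided local Lipschitzianity by itself does not yield continuity. I would argue in two stages. First, for each fixed $t>0$ the map $(x,\mu)\mapsto\tfrac1t\int_0^t b(s,x,\mu)\,ds$ is continuous on $\mathbb R^d\times\mathcal M_2(\mathbb R^d)$: $b(s,\cdot,\cdot)$ is continuous by {\bf A3}, and by {\bf A5} the integrand is bounded uniformly in $s\in[0,t]$ on every set $\{|x|\le R,\ W_2(\mu,\delta_0)\le R\}$, so dominated convergence applies. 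Second, the bound in {\bf A8} gives $|\tfrac1t\int_0^t b(s,x,\mu)\,ds-\bar b(x,\mu)|^2\le\varphi_1(t)C_R^b(1+R^2)$ for $|x|\le R$, a bound that is \emph{independent of} $\mu$; hence the time averages converge to $\bar b$ uniformly on $\{|x|\le R\}\times\mathcal M_2(\mathbb R^d)$, and a locally uniform limit of continuous functions is continuous, so $\bar b$ is continuous, and the same reasoning (using {\bf A4}, resp.\ {\bf A7}, in the role of {\bf A5}) handles $\bar\sigma$ and $\int_U\bar h(\cdot,\cdot,z)\nu(dz)$. With {\bf A1}--{\bf A7} verified for the averaged coefficients, Theorem \ref{mainresult1} yields a unique strong solution $\bar X(t)\in L^\kappa(\Omega;\mathbb R^d)$ of \eqref{ASDE} on $[0,T]$ with $\bar X(0)=x_0$, which completes the proof. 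Besides {\bf A3}, the other points warranting care are that all these estimates invoke {\bf A8}--{\bf A9} in the regime $t\to\infty$ (consistent with $\lim_{t\to\infty}\varphi_i(t)=0$), and that the jump-coefficient limits must be taken in $L^p(U,\nu)$ rather than pointwise in $z$.
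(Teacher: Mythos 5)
Your proposal is correct and follows essentially the same route as the paper's appendix proof: verify Assumptions {\bf A1}--{\bf A7} for $(\bar b,\bar\sigma,\bar h)$ by comparing them with the time averages of $(b,\sigma,h)$ through {\bf A8}--{\bf A9} and letting $t\to\infty$, then invoke Theorem \ref{mainresult1} (with {\bf A6} inherited trivially). The only differences are technical rather than conceptual: the paper adds and subtracts the time average and uses Young's inequality, accepting slightly enlarged constants with explicit $\varphi_i(t)$ remainders, whereas you first establish convergence of the time averages via Jensen and transfer each inequality in the limit; your two-stage locally-uniform-convergence argument for the continuity requirement {\bf A3} is in fact more detailed than the paper's terse treatment of that point.
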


\begin{proof}
By Theorem \ref{mainresult1}, we only need to check that coefficients functions $\bar{b}$, $\bar{\sigma}$, $\bar{h}$ fulfill the conditions for the existence and uniqueness of the solution. Note that both \eqref{AMain} and (\ref{ASDE}) have the same initial value $x_0$. The condition in Assumption {\bf A6} holds directly. For the conditions in Assumptions {\bf A1-A5}, we only discuss about the case of function $\bar{b}$, and the results for functions $\bar{\sigma}, \bar{h}$ follow by similar arguments. At last, we verify that $\bar{h}$ satisfies the condition in Assumption {\bf A7}. The details of this proof are given in the Appendix.
\end{proof}

We next complete the proof of Theorem \ref{Th} in what follows.
\begin{proof}[Proof of Theorem \ref{Th}] 

For any $t\in[0,T]$, it follows from (\ref{AMain}) and (\ref{ASDE}) that 
\begin{align}
X_{\varepsilon}(t)-\bar{X}(t)=&\int_0^t\left[b\left(\frac{s}{\varepsilon},X_{\varepsilon}(s),\mathscr{L}_{X_{\varepsilon}(s)}\right)-\bar{b}\left(\bar{X}(s),\mathscr{L}_{\bar{X}(s)}\right)\right]ds+\int_0^t\left[\sigma\left(\frac{s}{\varepsilon},X_{\varepsilon}(s),\mathscr{L}_{X_{\varepsilon}(s)}\right)-\bar{\sigma}\left(\bar{X}(s),\mathscr{L}_{\bar{X}(s)}\right)\right]dW(s)\notag\\
&+\int_0^t\int_{U}\left[h\left(\frac{s}{\varepsilon},X_{\varepsilon}(s),\mathscr{L}_{X_{\varepsilon}(s)},z\right)-\bar{h}\left(\bar{X}(s),\mathscr{L}_{\bar{X}(s)},z\right)\right]\tilde{N}(ds,dz).\notag
\end{align}
To deal with the one-sided local Lipschitz case, for each $R>0$, a stopping time is defined as follows:
$$\eta_R:=\inf\big\{t\in[0,T]: |X_{\varepsilon}(t)|\vee|\bar{X}(t)|> R\big\}.$$
According to De Morgan's Law, we have
\begin{equation*}
\mathbb{E}\left[\sup_{0\leqslant t\leqslant T}|X_{\varepsilon}(t)-\bar{X}(t)|^{2}\right]
=\mathbb{E}\left[\sup_{0\leqslant t\leqslant T}|X_{\varepsilon}(t)-\bar{X}(t)|^{2}\mathbb{I}_{\{\eta_R>T\}}\right]+\mathbb{E}\left[\sup_{0\leqslant t\leqslant T}|X_{\varepsilon}(t)-\bar{X}(t)|^{2}\mathbb{I}_{\{\eta_R\leqslant T\}}\right]:=I_1+I_2.
\end{equation*}
We now calculate each term on the right hand side of equation above.

(1) Calculation of the term $I_1$. We first obtain that 
\begin{equation}\label{I_1}
I_1=\mathbb{E}\left[\sup_{0\leqslant t\leqslant T}|X_{\varepsilon}(t)-\bar{X}(t)|^{2}\mathbb{I}_{\{\eta_R>T\}}\right]\leqslant\mathbb{E}\left[\sup_{0\leqslant t\leqslant T}|X_{\varepsilon}(t\wedge\eta_R)-\bar{X}(t\wedge\eta_R)|^{2}\right].
\end{equation}
By using the It\^o formula, we have
$$|X_{\varepsilon}(t\wedge\eta_R)-\bar{X}(t\wedge\tau_R)|^{2}=\sum_{i=1}^5\Lambda_{i}(t),$$
where
\begin{align}
    \Lambda_{1}(t)=&2\int_0^{t\wedge\eta_R}\left\langle X_{\varepsilon}(s)-\bar{X}(s), b\left(\frac{s}{\varepsilon},X_{\varepsilon}(s),\mathscr{L}_{X_{\varepsilon}(s)}\right)-\bar{b}\left(\bar{X}(s),\mathscr{L}_{\bar{X}(s)}\right)\right\rangle ds,\notag\\
    \Lambda_{2}(t)=&\int_0^{t\wedge\eta_R}\left\| \sigma\left(\frac{s}{\varepsilon},X_{\varepsilon}(s),\mathscr{L}_{X_{\varepsilon}(s)}\right)-\bar{\sigma}\left(\bar{X}(s),\mathscr{L}_{\bar{X}(s)}\right)\right\|^2 ds,\notag\\
    \Lambda_{3}(t)=&\int_0^{t\wedge\eta_R}\int_U\left|h\left(\frac{s}{\varepsilon},X_{\varepsilon}(s),\mathscr{L}_{X_{\varepsilon}(s)},z\right)-\bar{h}\left(\bar{X}(s),\mathscr{L}_{\bar{X}(s)},z\right)\right|^2 N(ds,dz),\notag\\
    \Lambda_{4}(t)=&2\int_0^{t\wedge\eta_R}\left\langle X_{\varepsilon}(s)-\bar{X}(s), \sigma\left(\frac{s}{\varepsilon},X_{\varepsilon}(s),\mathscr{L}_{X_{\varepsilon}(s)}\right)-\bar{\sigma}\left(\bar{X}(s),\mathscr{L}_{\bar{X}(s)}\right)dW(s)\right\rangle,\notag\\
    \Lambda_{5}(t)=&2\int_0^{t\wedge\eta_R}\left\langle X_{\varepsilon}(s)-\bar{X}(s),h\left(\frac{s}{\varepsilon},X_{\varepsilon}(s),\mathscr{L}_{X_{\varepsilon}(s)},z\right)-\bar{h}\left(\bar{X}(s),\mathscr{L}_{\bar{X}(s)},z\right)\right\rangle\tilde{N}(ds,dz).\notag
\end{align}
By taking supremum over $[0, u]$ for $u\in[0,T]$ and then taking expectations, we next estimate $\mathbb{E}\big[\sup_{0\leqslant t\leqslant u}\Lambda_{i}(t)\big], i=1, \ldots, 5$, respectively. In view of of Assumptions {\bf A1, A2, A8}, we obtain
\begin{align}\label{I_{1,R}}
\mathbb{E}\left[\sup_{0\leqslant t\leqslant u}\Lambda_{1}(t)\right]\leqslant&2\mathbb{E}\left[\sup_{0\leqslant t\leqslant u}\int_0^{t\wedge\eta_R}\left\langle X_{\varepsilon}(s)-\bar{X}(s), b\left(\frac{s}{\varepsilon},X_{\varepsilon}(s),\mathscr{L}_{X_{\varepsilon}(s)}\right)-b\left(\frac{s}{\varepsilon},\bar{X}(s),\mathscr{L}_{X_{\varepsilon}(s)}\right)\right\rangle ds\right]\notag\\
&+2\mathbb{E}\left[\sup_{0\leqslant t\leqslant u}\int_0^{t\wedge\eta_R}|X_{\varepsilon}(s)-\bar{X}(s)|\cdot\left|b\left(\frac{s}{\varepsilon},\bar{X}(s),\mathscr{L}_{X_{\varepsilon}(s)}\right)-b\left(\frac{s}{\varepsilon},\bar{X}(s),\mathscr{L}_{\bar{X}(s)}\right)\right|ds\right]\notag\\
&+2\mathbb{E}\left[\sup_{0\leqslant t\leqslant u}\int_0^{t\wedge\eta_R}|X_{\varepsilon}(s)-\bar{X}(s)|\cdot\left|b\left(\frac{s}{\varepsilon},\bar{X}(s),\mathscr{L}_{\bar{X}(s)}\right)-\bar{b}\left(\bar{X}(s),\mathscr{L}_{\bar{X}(s)}\right)\right|ds\right]\notag\\
\leqslant&2L_R\mathbb{E}\left[\int_0^{u\wedge\eta_R}|X_{\varepsilon}(s)-\bar{X}(s)|^2ds\right]+2\sqrt{L}\mathbb{E}\left[\int_0^{u\wedge\eta_R}|X_{\varepsilon}(s)-\bar{X}(s)|\cdot W_2\left(\mathscr{L}_{X_{\varepsilon}(s)},\mathscr{L}_{\bar{X}(s)}\right)ds\right]\notag\\
&+2\mathbb{E}\left[\int_0^{u\wedge\eta_R}|X_{\varepsilon}(s)-\bar{X}(s)|\cdot\left|b\left(\frac{s}{\varepsilon},\bar{X}(s),\mathscr{L}_{\bar{X}(s)}\right)-\bar{b}\left(\bar{X}(s),\mathscr{L}_{\bar{X}(s)}\right)\right|ds\right]\notag\\
\leqslant&(2L_R+2\sqrt{L}+1)\mathbb{E}\left[\int_0^{u\wedge\eta_R}|X_{\varepsilon}(s)-\bar{X}(s)|^2ds\right]\notag\\
&+u\mathbb{E}\left[\frac{\varepsilon}{u\wedge\eta_R}\int_0^{\frac{u\wedge\eta_R}{\varepsilon}}\left|b\left(s,\bar{X}(s\varepsilon),\mathscr{L}_{\bar{X}(s\varepsilon)}\right)-\bar{b}\left(\bar{X}(s\varepsilon),\mathscr{L}_{\bar{X}(s\varepsilon)}\right)\right|^2ds\right]\notag\\
\leqslant&(2L_R+2\sqrt{L}+1)\int_0^u\mathbb{E}\sup_{0\leqslant t\leqslant s}|X_{\varepsilon}(t\wedge\eta_R)-\bar{X}(t\wedge\eta_R)|^2ds+uC_R^b\varphi_1\left(\frac{u\wedge\eta_R}{\varepsilon}\right)\left(1+\mathbb{E}\sup_{0\leqslant t\leqslant u}|\bar{X}(t)|^2\right)\notag\\
\leqslant&(2L_R+2\sqrt{L}+1)\int_0^u\mathbb{E}\sup_{0\leqslant t\leqslant s}|X_{\varepsilon}(t\wedge\eta_R)-\bar{X}(t\wedge\eta_R)|^2ds+uC_R^b\cdot C\varphi_1\left(\frac{u\wedge\eta_R}{\varepsilon}\right).
\end{align}
Here we have used the fact that, for each $u\in[0,T]$, $\left(\mathbb{E}\sup_{0\leqslant t\leqslant u}|\bar{X}(t)|^2\right)^{\frac{1}{2}}\leqslant(\mathbb{E}\sup_{0\leqslant t\leqslant u}|\bar{X}(t)|^r)^{\frac{1}{r}}<\infty$ if $\mathbb{E}|X_{\varepsilon}(0)|^r<\infty$. 
By Assumptions {\bf A1, A2, A8}, and using the same techniques as (\ref{I_{1,R}})  was proved, we get
\begin{align}\label{I_{2,R}}
\mathbb{E}\left[\sup_{0\leqslant t\leqslant u}\Lambda_{2}(t)\right]\leqslant&3\mathbb{E}\left[\sup_{0\leqslant t\leqslant u}\int_0^{t\wedge\eta_R}\left\|\sigma\left(\frac{s}{\varepsilon},X_{\varepsilon}(s),\mathscr{L}_{X_{\varepsilon}(s)}\right)-\sigma\left(\frac{s}{\varepsilon},\bar{X}(s),\mathscr{L}_{X^{\varepsilon}(s)}\right)\right\|^2 ds\right]\notag\\
&+3\mathbb{E}\left[\sup_{0\leqslant t\leqslant u}\int_0^{t\wedge\eta_R}\left\|\sigma\left(\frac{s}{\varepsilon},\bar{X}(s),\mathscr{L}_{X_{\varepsilon}(s)}\right)-\sigma\left(\frac{s}{\varepsilon},\bar{X}(s),\mathscr{L}_{\bar{X}(s)}\right)\right\|^2 ds\right]\notag\\
&+3\mathbb{E}\left[\sup_{0\leqslant t\leqslant u}\int_0^{t\wedge\eta_R}\left\|\sigma\left(\frac{s}{\varepsilon},\bar{X}(s),\mathscr{L}_{\bar{X}(s)}\right)-\bar{\sigma}\left(\bar{X}(s),\mathscr{L}_{\bar{X}(s)}\right)\right\|^2 ds\right]\notag\\
\leqslant&3L_R\mathbb{E}\left[\int_0^{u\wedge\eta_R}|X_{\varepsilon}(s)-\bar{X}(s)|^2ds\right]+3L\mathbb{E}\left[\int_0^{u\wedge\eta_R}W_2^2\left(\mathscr{L}_{X_{\varepsilon}(s)},\mathscr{L}_{\bar{X}(s)}\right)ds\right]\notag\\
&+3\mathbb{E}\left[\int_0^{u\wedge\eta_R}\left\|\sigma\left(\frac{s}{\varepsilon},\bar{X}(s),\mathscr{L}_{\bar{X}(s)}\right)-\bar{\sigma}\left(\bar{X}(s),\mathscr{L}_{\bar{X}(s)}\right)\right\|^2 ds\right]\notag\\
\leqslant&3(L_R+L)\int_0^u\mathbb{E}\sup_{0\leqslant t\leqslant s}|X_{\varepsilon}(t\wedge\eta_R)-\bar{X}(t\wedge\eta_R)|^2ds+3uC_R^{\sigma}\cdot C\varphi_2\left(\frac{u\wedge\eta_R}{\varepsilon}\right).
\end{align}
By a similar argument as (\ref{I_{2,R}}), one uses Assumptions {\bf A1, A2, A8} to get
\begin{align}\label{I_{3,R}}
\mathbb{E}\left[\sup_{0\leqslant t\leqslant u}\Lambda_{3}(t)\right]&\leqslant\mathbb{E}\left[\int_0^{t\wedge\eta_R}\int_U\left|h\left(\frac{s}{\varepsilon},X_{\varepsilon}(s),\mathscr{L}_{X_{\varepsilon}(s)},z\right)-\bar{h}\left(X(s),\mathscr{L}_{\bar{X}(s)},z\right)\right|^2\nu(dz)ds\right] \notag\\
&\leqslant3(L_R+L)\int_0^u\mathbb{E}\sup_{0\leqslant t\leqslant s}|X_{\varepsilon}(t\wedge\eta_R)-\bar{X}(t\wedge\eta_R)|^2ds+3uC_R^{h}\cdot C\varphi_3\left(\frac{u\wedge\eta_R}{\varepsilon}\right).
\end{align}
Due to Burkholder-Davis-Cundy inequality, Young's inequality \eqref{YE} (with $\epsilon=\frac{1}{24},p=q=2$), and (\ref{I_{2,R}}), we have 
\begin{align}\label{I_{4,R}}
\mathbb{E}\left[\sup_{0\leqslant t\leqslant u}\Lambda_{4}(t)\right]\leqslant&2\cdot\sqrt{32}\mathbb{E}\left(\int_0^{u\wedge\eta_R}|X_{\varepsilon}(s)-\bar{X}(s)|^2\cdot\left\|\sigma\left(\frac{s}{\varepsilon},X_{\varepsilon}(s),\mathscr{L}_{X_{\varepsilon}(s)}\right)-\bar{\sigma}\left(\bar{X}(s),\mathscr{L}_{\bar{X}(s)}\right)\right\|^2ds\right)^{\frac{1}{2}}\notag\\
\leqslant&12\mathbb{E}\left[\sup_{0\leqslant t\leqslant u}|X_{\varepsilon}(t\wedge\eta_R)-\bar{X}(t\wedge\eta_R)|\cdot\left(\int_0^{u\wedge\eta_R}\left\|\sigma\left(\frac{s}{\varepsilon},X_{\varepsilon}(s),\mathscr{L}_{X_{\varepsilon}(s)}\right)-\bar{\sigma}\left(\bar{X}(s),\mathscr{L}_{\bar{X}(s)}\right)\right\|^2ds\right)^{\frac{1}{2}}\right]\notag\\
\leqslant&\frac{1}{4}\mathbb{E}\left[\sup_{0\leqslant t\leqslant u}|X_{\varepsilon}(t\wedge\eta_R)-\bar{X}(t\wedge\eta_R)|^2\right]+144\mathbb{E}\left[\int_0^{u\wedge\eta_R}\left\|\sigma\left(\frac{s}{\varepsilon},X_{\varepsilon}(s),\mathscr{L}_{X_{\varepsilon}(s)}\right)-\bar{\sigma}\left(\bar{X}(s),\mathscr{L}_{\bar{X}(s)}\right)\right\|^2ds\right]\notag\\
\leqslant&\frac{1}{4}\mathbb{E}\left[\sup_{0\leqslant t\leqslant u}|X_{\varepsilon}(t\wedge\eta_R)-\bar{X}(t\wedge\eta_R)|^2\right]+432(L_R+L)\int_0^u\mathbb{E}\sup_{0\leqslant t\leqslant s}|X_{\varepsilon}(t\wedge\eta_R)-\bar{X}(t\wedge\eta_R)|^2ds\notag\\
&+432uC_R^{\sigma}\cdot C\varphi_2\left(\frac{u\wedge\eta_R}{\varepsilon}\right).
\end{align}
Analogously, in view of the Kunita's first inequality, Young's inequality \eqref{YE} (with $\epsilon=\frac{1}{4D},p=q=2$) and (\ref{I_{3,R}}), we arrive at
\begin{align}\label{I_{5,R}}
\mathbb{E}\left[\sup_{0\leqslant t\leqslant u}\Lambda_{5}(t)\right]\leqslant&2D\mathbb{E}\left(\int_0^{u\wedge\eta_R}\int_U|X_{\varepsilon}(s)-\bar{X}(s)|^2\cdot\left|h\left(\frac{s}{\varepsilon},X_{\varepsilon}(s),\mathscr{L}_{X^{\varepsilon}(s)},z\right)-\bar{h}\left(\bar{X}(s),\mathscr{L}_{\bar{X}(s)},z\right)\right|^2\nu(dz)ds\right)^{\frac{1}{2}}\notag\\
\leqslant&\frac{1}{4}\mathbb{E}\left[\sup_{0\leqslant t\leqslant u}|X_{\varepsilon}(t\wedge\eta_R)-\bar{X}(t\wedge\eta_R)|^2\right]\notag\\
&+4D^2\mathbb{E}\left[\int_0^{u\wedge\eta_R}\int_U\left|h\left(\frac{s}{\varepsilon},X_{\varepsilon}(s),\mathscr{L}_{X_{\varepsilon}(s)},z\right)-\bar{h}\left(\bar{X}(s),\mathscr{L}_{\bar{X}(s)},z\right)\right|^2\nu(dz)ds\right]\notag\\
\leqslant&\frac{1}{4}\mathbb{E}\left[\sup_{0\leqslant t\leqslant u}|X_{\varepsilon}(t\wedge\eta_R)-\bar{X}(t\wedge\eta_R)|^2\right]+12D^2(L_R+L)\int_0^u\mathbb{E}\sup_{0\leqslant t\leqslant s}|X_{\varepsilon}(t\wedge\eta_R)-\bar{X}(t\wedge\eta_R)|^2ds\notag\\
&+12D^2uC_R^{h}\cdot C\varphi_3\left(\frac{u\wedge\eta_R}{\varepsilon}\right).
\end{align}
By substituting (\ref{I_{1,R}})-(\ref{I_{5,R}}) into (\ref{I_1}), and further 
 utilizing the Gr\"onwall inequality, we obtain
\begin{align}\label{II-1}
I_1\leqslant&\mathbb{E}\left[\sup_{0\leqslant t\leqslant T}|X_{\varepsilon}(t\wedge\eta_R)-\bar{X}(t\wedge\eta_R)|^2\right]\notag\\
\leqslant&\left[2TC_R^b\cdot C\varphi_1\left(\frac{T\wedge\eta_R}{\varepsilon}\right)+870TC_R^{\sigma}\cdot C\varphi_2\left(\frac{T\wedge\eta_R}{\varepsilon}\right)+6(1+4D^2)TC_R^{\sigma}C\varphi_3\left(\frac{T\wedge\eta_R}{\varepsilon}\right)\right]\notag\\
&\cdot e^{\left[4(L_R+\sqrt{L})+2+12\cdot(73+2D^2)(L_R+L^2)\right]T}.
\end{align}

(2) Calculation of the term $I_2$. Using the Cauchy-Schwarz inequality and Theorem \ref{mainresult1}, we deduce that
\begin{align}\label{II-2}
I_2=&\mathbb{E}\left[\sup_{0\leqslant t\leqslant T}|X_{\varepsilon}(t)-\bar{X}(t)|^2\mathbb{I}_{\{\eta_R\leqslant T\}}\right]
\leqslant\sqrt{\mathbb{E}\left(\sup_{0\leqslant t\leqslant T}|X_{\varepsilon}(t)-\bar{X}(t)|^2\right)^2}\sqrt{\mathbb{E}\left(\mathbb{I}_{\{\eta_R\leqslant T\}}\right)^2}\notag\\
\leqslant&2\sqrt{2}\sqrt{\mathbb{E}\left(\sup_{0\leqslant t\leqslant T}|X_{\varepsilon}(t)|^4+\sup_{0\leqslant t\leqslant T}|\bar{X}(t)|^4\right)}\sqrt{\mathbb{E}\left(\mathbb{I}_{\{\eta_R\leqslant T\}}\frac{|X_{\varepsilon}(\eta_R)|^4+|\bar{X}(\eta_R)|^4}{R^4}\right)}\notag\\
\leqslant&\frac{2\sqrt{2}}{R^2}\left(\mathbb{E}\sup_{0\leqslant t\leqslant T}|X_{\varepsilon}(t)|^4+\mathbb{E}\sup_{0\leqslant t\leqslant T}|\bar{X}(t)|^4\right)\leqslant \frac{C}{R^2}.
\end{align}

By combining the estimates of $I_1$ and $I_2$, i.e., \eqref{II-1} and \eqref{II-2}, we conclude that
\begin{align}
\mathbb{E}\sup_{0\leqslant t\leqslant T}|X_{\varepsilon}(t)-\bar{X}(t)|^2\leqslant& \left[2TC_R^b\cdot C\varphi_1\left(\frac{T\wedge\eta_R}{\varepsilon}\right)+870TC_R^{\sigma}\cdot C\varphi_2\left(\frac{T\wedge\eta_R}{\varepsilon}\right)+6(1+4D^2)TC_R^{\sigma}C\varphi_3\left(\frac{T\wedge\eta_R}{\varepsilon}\right)\right]\notag\\
&\cdot e^{\left[4(L_R+\sqrt{L})+2+12\cdot(73+2D^2)(L_R+L^2)\right]T}+\frac{C}{R^2}.\notag
\end{align}
Now, for any $\delta>0$, we can choose $R>0$ sufficiently large such that $\frac{C}{R^4}<\frac{\delta}{2}$. Furthermore, by taking $\varepsilon$ small enough and using the averaging condition {\bf A8}, we can have that
$$\left[2TC_R^b\cdot C\varphi_1\left(\frac{T\wedge\eta_R}{\varepsilon}\right)+870TC_R^{\sigma}\cdot C\varphi_2\left(\frac{T\wedge\eta_R}{\varepsilon}\right)+6(1+4D^2)TC_R^{\sigma}C\varphi_3\left(\frac{T\wedge\eta_R}{\varepsilon}\right)\right]\cdot e^{\left[4(L_R+\sqrt{L})+2+12\cdot(73+2D^2)(L_R+L^2)\right]T}<\frac{\delta}{2}.$$
Therefore, the arbitrariness of $\delta$ implies that $\mathbb{E}\sup_{0\leqslant t\leqslant T}|X_{\varepsilon}(t)-\bar{X}(t)|^2$ converges to 0, as $\varepsilon$ goes to 0. This completes the proof.
\end{proof}

\renewcommand{\theequation}{\thesection.\arabic{equation}}
\setcounter{equation}{0}

\section{Example}\label{sec4}
In this section, let us present an illustrative example for our theoretical results of this paper.
\\
{\bf Example 4.1.} Consider the following one-dimensional McKean-Vlasov SDE
\begin{align}\label{Example-equation}
dX_{\varepsilon}(t)=&\left[\left(X_{\varepsilon}(t)-X_{\varepsilon}^3(t)\right)\frac{\frac{t}{\varepsilon}}{1+\frac{t}{\varepsilon}}+\mathbb{E}X_{\varepsilon}(t)\right]dt+\left[X_{\varepsilon}(t)\sin\left(\log^2\left(1+X_{\varepsilon}^2(t)\right)\right)\frac{\frac{t}{\varepsilon}}{2+\frac{t}{\varepsilon}}+\mathbb{E}X_{\varepsilon}(t)\right]dW(t) \notag\\
&+\int_{U}\left[X_{\varepsilon}(t)\sin\left(\log^{\frac{3}{2}}\left(1+X_{\varepsilon}^2(t)\right)\right)\left(1-e^{-\frac{t}{\varepsilon}}\right)+\mathbb{E}X_{\varepsilon}(t)\right]\tilde{N}(dt, dz), \;\;\; t\in[0,T],
\end{align}
with $X_{\varepsilon}(0)=x_0$ being a constant. Here, $W(t)$ is a scalar Wiener process,  $U=\mathbb{R}\backslash \{ 0\}$, and $\nu$ is a finite measure with $\nu(U)=1$. Define 
$$b(t,x,\mu)=(x-x^3)\frac{t}{1+t}+\int_{\mathbb{R}}y\mu(dy), \;\; \sigma(t,x,\mu)=\psi(x)\frac{t}{2+t}+\int_{\mathbb{R}}y\mu(dy),\;\; h(t,x,\mu,z)=\phi(x)(1-e^{-t})+\int_{\mathbb{R}}y\mu(dy),$$
where $\psi(x)=x\sin(\log^2(1+x^2))$ and $\phi(x)=x\sin(\log^{\frac{3}{2}}(1+x^2))$ are two continuously differentiable functions. For any $x\in \mathbb{R}$, we can calculate that
\begin{equation}\label{Twoab}
|\psi(x)|\leqslant|x|, \;\;|\phi(x)|\leqslant |x|,\;\;|(\partial_x\psi)(x)|\leqslant 1+4\log(1+x^2) \;\;\text{and}\;\;|(\partial_x\phi)(x)|\leqslant 1+3\sqrt{\log(1+x^2)}.
\end{equation}

{\bf (1) Well-posedness.} To show that (\ref{Example-equation}) has a unique solution $(X_{\varepsilon}(t))_{0\leqslant t\leqslant T}$, we need to examine that the conditions in Theorem \ref{mainresult1} are satisfied. For any $R>0$, $x,y\in\mathbb{R}$  with $|x|\vee|y|\leqslant R$ and $\mu\in\mathcal{M}_2(\mathbb{R})$, one computes
\begin{align}
(x-y)(b(t,x,\mu)-b(t,y,\mu))&=(x-y)(x-x^3-y+y^3)\frac{t}{1+t}\leqslant |x-y|^2-|x-y|^2(x^2+xy+y^2)\notag\\
&\leqslant |x-y|^2(1-xy)\leqslant (1+R^2)|x-y|^2:=L_R^1|x-y|^2,\notag
\end{align}
\begin{align}
|\sigma(t,x,\mu)-\sigma(t,y,\mu)|^2&=\left|\psi(x)-\psi(y)\right|^2\left(\frac{t}{2+t}\right)^2\leqslant \left|\int_0^1(\partial_x\psi)(y+\theta(x-y))(x-y)d\theta\right|^2\notag\\
&\leqslant \left[\sup_{|z|\leqslant R}\left|(\partial_x\psi)(z)\right|\right]^2\cdot|x-y|^2\leqslant \left[\sup_{|z|\leqslant R}\left(1+4\log(1+z^2)\right)\right]^2\cdot|x-y|^2\notag\\
&\leqslant \left(1+4\log(1+R^2)\right)^2|x-y|^2 := L_R^2|x-y|^2,\notag
\end{align}
\begin{align}
\int_U|h(t,x,\mu,z)-h(t,y,\mu,z)|^2\nu(dz)&=\left|\phi(x)-\phi(y)\right|^2(1-e^{-t})^2\leqslant \left|\int_0^1(\partial_x\phi)(y+\theta(x-y))(x-y)d\theta\right|^2\notag\\
&\leqslant \left[\sup_{|z|\leqslant R}|(\partial_x\phi)(z)|\right]^2\cdot|x-y|^2\leqslant \left[\sup_{|z|\leqslant R}\left(1+3\sqrt{\log(1+z^2)}\right)\right]^2\cdot|x-y|^2\notag\\
&\leqslant \left(1+3\sqrt{\log(1+R^2)}\right)^2|x-y|^2 := L_R^3|x-y|^2.\notag
\end{align}
Denote $L_R=\max\{L_R^1,L_R^2,L_R^3\}$. It implies that Assumption {\bf A1} is satisfied. According to the expressions of $b$, $\sigma$ and $h$, for any $x\in\mathbb{R}$ and $\mu_1,\mu_2\in\mathcal{M}_2(\mathbb{R})$, we calculate
\begin{align}
&|b(t,x,\mu_1)-b(t,x,\mu_2)|^2+\|\sigma(t,x,\mu_1)-\sigma(t,x,\mu_2)\|^2+\int_U\left|h(t,x,\mu_1,z)-h(t,y,\mu_2,z)\right|^2\nu(dz)\notag\\
=&3\left|\int_{\mathbb{R}}y\mu_1(dy)-\int_{\mathbb{R}}y\mu_2(dy)\right|^2\leqslant3W_2^2(\mu_1,\mu_2),\notag
\end{align}
which means that $b$, $\sigma$ and $h$ meet Assumptions {\bf A2-A3}. In addition, owing to (\ref{Twoab}), the boundeness of $\frac{t}{1+t}$, $\frac{t}{2+t}$ and $1-e^{-t}$, we can deduce that, for any $x\in\mathbb{R}^d$, $\mu\in\mathcal{M}_2(\mathbb{R}),
$
 \begin{align}
x\cdot b(t,x,\mu)&\leqslant x(x-x^3)\left(\frac{t}{1+t}\right)+x\int_{\mathbb{R}}y\mu(dy)\leqslant x^2+\frac{1}{2}x^2+\frac{1}{2}\left(\int_{\mathbb{R}}y\mu(dy)\right)^2\leqslant 2\left(1+x^2+W_2^{2}(\mu,\delta_0)\right), \notag
\end{align}
\begin{align}
|\sigma(t,x,\mu)|^2=\left|\psi(x)\frac{t}{2+t}+\int_{\mathbb{R}}y\mu(dy)\right|^2\leqslant2|\psi(x)|^2+2\left(\int_{\mathbb{R}}y\mu(dy)\right)^2
\leqslant 2\left(1+|x|^2+W_2^2(\mu,\delta_0)\right),\notag
\end{align}
\begin{align}
\int_U|h(t,x,\mu,z)|^2\nu(dz)=\left|\phi(x)(1-e^{-t})+\int_{\mathbb{R}}y\mu(dy)\right|^2\leqslant2|\phi(x)|^2+2\left(\int_{\mathbb{R}}y\mu(dy)\right)^2
\leqslant 2\left(1+|x|^2+W_2^2(\mu,\delta_0)\right).\notag
\end{align}
Thus Assumption {\bf A4} holds.
 
Note that, for any $x\in\mathbb{R}^d$, $\mu\in\mathcal{M}_2(\mathbb{R}),
$
 \begin{align}
|b(t,x,\mu)|^2\leqslant2(x-x^3)^2\left(\frac{t}{1+t}\right)^2+2\left(\int_{\mathbb{R}}y\mu(dy)\right)^2\leqslant 2x^2-4x^4+2x^6+2W_2^2(\mu,\delta_0)\leqslant 4\left(1+x^6+W_2^{6}(\mu,\delta_0)\right). \notag
\end{align}
We can find that Assumption {\bf A5} holds with $\kappa=6$. 

In addition, note that $X_{\varepsilon}(0)$ is a constant, thus Assumption {\bf A6} (with $r\geqslant 18$) naturally holds. Due to the expression of $h$ and the finiteness of $\nu$, Assumption {\bf A7} can be verified easily using the same technique as Assumptions {\bf A1-A2} was checked.

{\bf (2) The averaging principle.} Define $$\bar{b}(x,\mu)=x-x^3+\int_{\mathbb{R}}y\mu(dy), \;\;\bar{\sigma}(x,\mu)=\psi(x)+\int_{\mathbb{R}}y\mu(dy),
\;\;
\bar{h}(x,\mu,z)=\phi(x)+\int_{\mathbb{R}}y\mu(dy).$$ Then we can verify that the averaging conditions in Assumption {\bf A8-A9} (with $\kappa=6$ and $r\geqslant 18$) are satisfied:
\begin{align}
\frac{1}{t}\int_{0}^{t}|b(s,x,\mu)-\bar{b}(x,\mu)|^2ds=\frac{1}{t}\int_{0}^{t}|x-x^3|^2\left[1-\frac{s}{1+s}\right]^2ds=x^2(1-x^2)^2\frac{1}{1+t}\leqslant \varphi_1(t)C_R^b(1+|x|^2)\notag
\end{align}
\begin{align}
\frac{1}{t}\int_{0}^{t}|\sigma(s,x,\mu)-\bar{\sigma}(x,\mu)|^2ds=\frac{1}{t}\int_{0}^{t}\psi^2(x)\left[1-\frac{s}{2+s}\right]^2ds=\psi^2(x)\frac{2(1+t)}{(2+t)t}\leqslant \varphi_2(t)C_R^{\sigma}(1+|x|^2)\notag
\end{align}
\begin{align}
\frac{1}{t}\int_{0}^{t}\int_U|h(s,x,\mu,z)-\bar{h}(x,\mu,z)|^2\nu(dz)ds=\frac{1}{t}\int_{0}^{t}\phi^2(x)[1-(1-e^{-s})]^2ds=\phi^2(x)\frac{1-e^{-2t}}{2t}\leqslant \varphi_3(t)C_R^{h}(1+|x|^2),\notag
\end{align}
and
\begin{align}
\frac{1}{t}\int_{0}^{t}\int_U|h(s,x,\mu,z)-\bar{h}(x,\mu,z)|^l\nu(dz)ds=\frac{1}{t}\int_{0}^{t}\phi^l(x)[1-(1-e^{-s})]^lds
=\phi^l(x)\frac{1-e^{-lt}}{lt}
\leqslant \varphi(t)C_R^{h}(1+|x|^l),\;\;l=r \text{ or }\kappa,\notag
\end{align}
for $x\in\mathbb{R}$ with $|x|\leqslant R$, where $\varphi_1(t)=\frac{1}{1+t}$, $\varphi_2(t)=\frac{1}{1+t}$, $\varphi_3(t)=\frac{1-e^{-2t}}{2t}$ and $\varphi(t)=\frac{1-e^{-lt}}{lt}$ are continuous and positive bounded functions with $\lim_{t\to\infty}\varphi_i(t)=\lim_{t\to\infty}\varphi(t)=0,\;i=1,2,3$.

Based on the above discussion and the result of Theorem \ref{Th}, the solution of equation (\ref{Example-equation}) can be approximated by the following equation (with $\bar{X}(0)=x_0$)
\begin{align}\label{AExample-equation}
d\bar{X}(t)=&\left[\left(\bar{X}(t)-\bar{X}^3(t)\right)+\mathbb{E}\bar{X}(t)\right]dt+\left[\bar{X}(t)\sin\left(\log^2\left(1+\bar{X}^2(t)\right)\right)+\mathbb{E}\bar{X}(t)\right]dW(t) \notag\\
&+\int_{U}\left[\bar{X}(t)\sin\left(\log^{\frac{3}{2}}\left(1+\bar{X}^2(t)\right)\right)+\mathbb{E}\bar{X}(t)\right]\tilde{N}(dt,dz), \;\;\; t\in[0,T],
\end{align}
in the sense of mean square.

Now we carry out the numerical simulation to compute the solutions of  (\ref{Example-equation}) and  (\ref{AExample-equation}) under conditions $x_0=1$, $T=10$, $\varepsilon=0.01$ and $x_0=1$, $T=10$, $\varepsilon=0.001$, respectively. Fig. \ref{1a} and Fig.\ref{1b} show the comparisons of the exact solution $X_{\varepsilon}(t)$ for (\ref{Example-equation}) with the averaged solution $\bar{X}(t)$ for (\ref{AExample-equation}). As we can see, there is a good agreement between solutions of the original equation and the averaged equation. One can further find that error $X_{\varepsilon}(t)-\bar{X}(t)$ decreases  when $\varepsilon$ varies from $0.01$ to $0.001$. This result agrees well with our averaging principle (Theorem \ref{Th}).

\begin{figure}[h]
\centering
\vspace{-0.2cm}
\subfigure[$X_{\varepsilon}(0)=\bar{X}(0)=1$, $\varepsilon=0.01$]{\label{1a}
\includegraphics[width=0.86\textwidth]{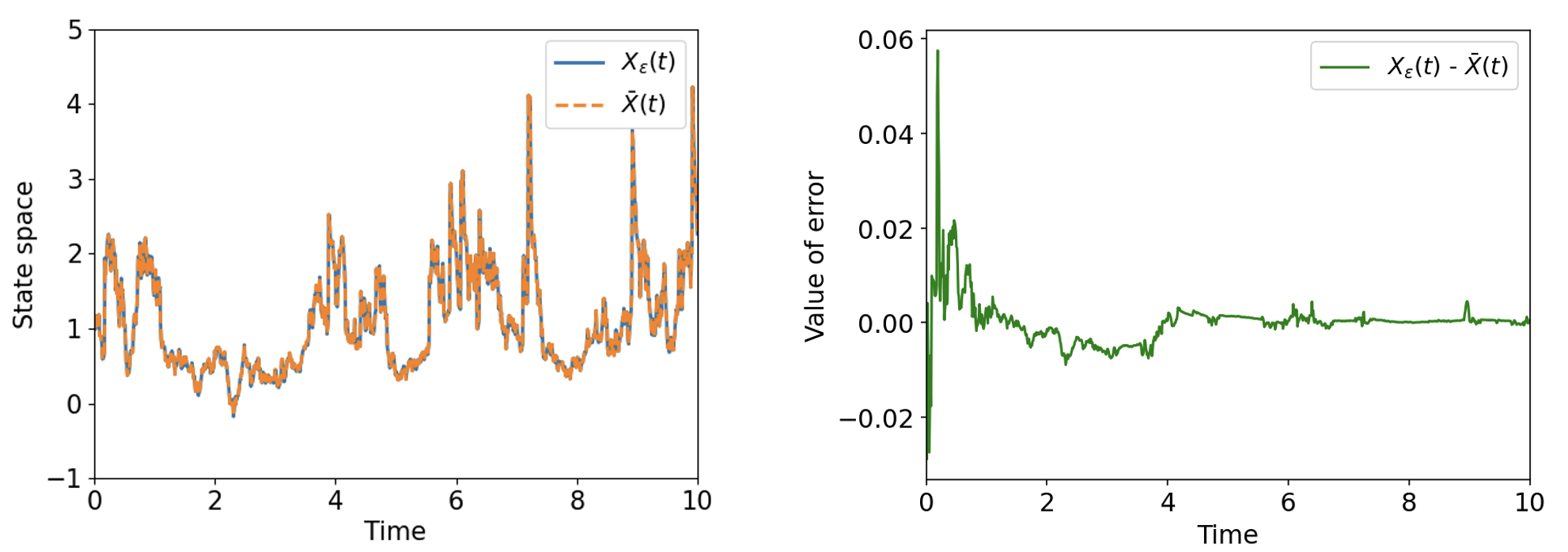}
}
\vspace{-0.2cm}
\subfigure[$X_{\varepsilon}(0)=\bar{X}(0)=1$, $\varepsilon=0.001$]{\label{1b}
\includegraphics[width=0.86\textwidth]{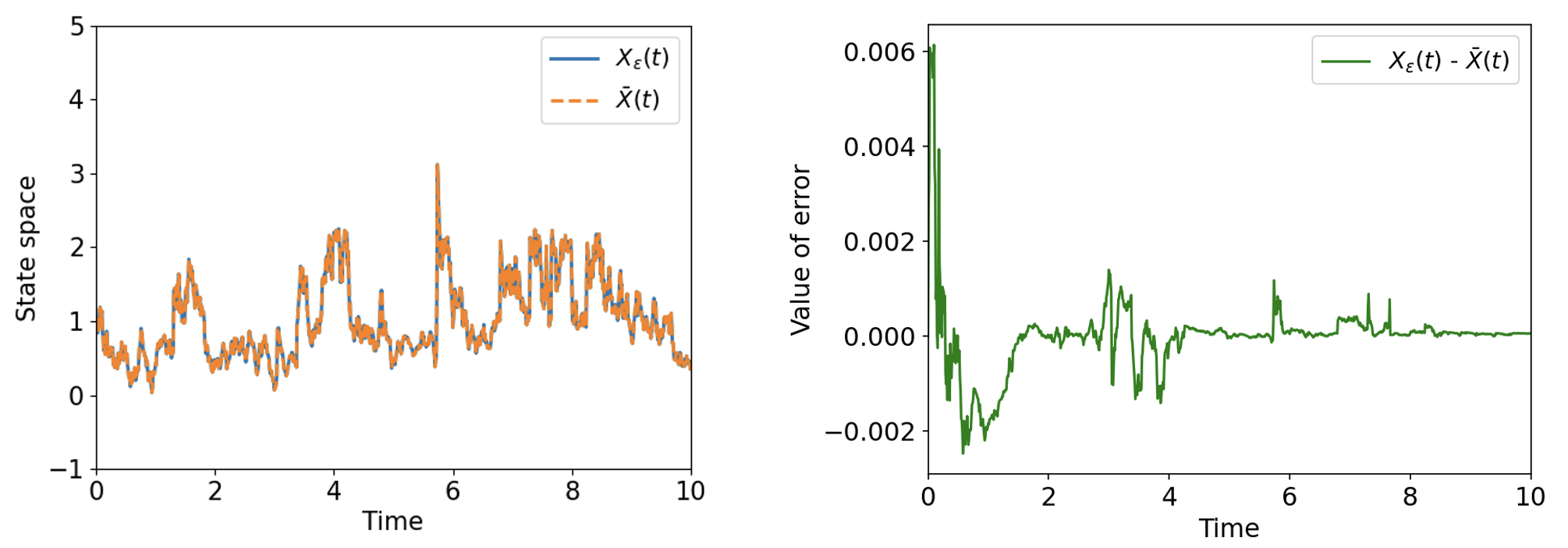}
}
\caption{(Color online) Comparison of the solution $X_{\varepsilon}(t)$ for (\ref{Example-equation}) with the averaged solution $\bar{X}(t)$ for (\ref{AExample-equation}). 
}
 \end{figure}

\begin{figure}[h]
 \centering
 \vspace{0cm}
\includegraphics[width=0.665\textwidth]{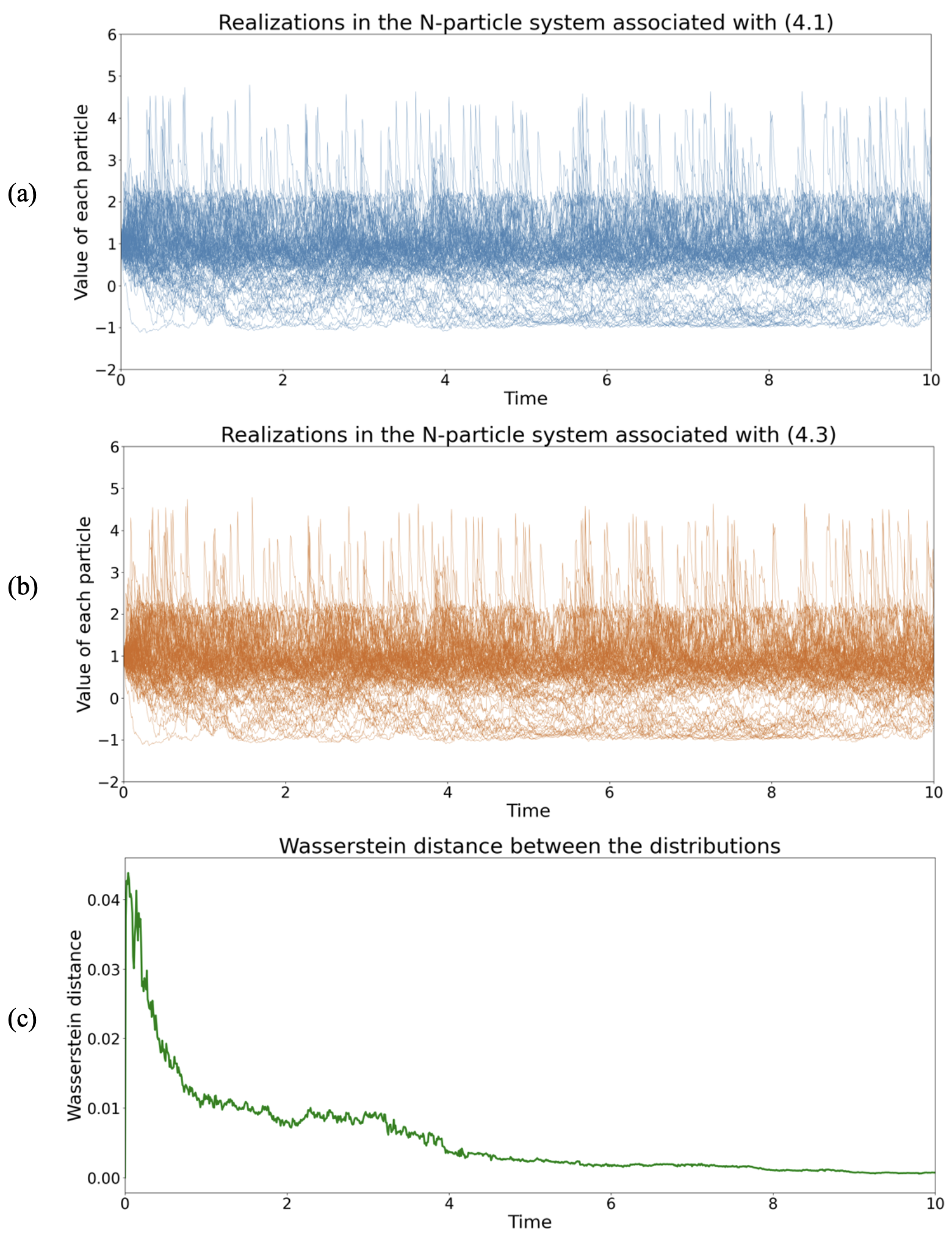}
\caption{(Color online) Comparison of the interacting N-particle systems associated with McKean-Vlasov SDEs (4.1) and (4.3), for $N=100$ and $\varepsilon=0.01$. 
 }
 \end{figure}

We remark that in numerical simulation, to approximate McKean-Vlasov SDE with (\ref{Example-equation}) and (\ref{AExample-equation}), we have used $N$-dimensional systems of interacting particles, respectively, which  can be
regarded as usual SDEs. This is so-called propagation of chaos result (see Appendix). Moreover, we note that, to simulate the integrals w.r.t. the compensated Poisson random measure $\tilde N(dt,dz) = N(dt,dz) - \nu (dz)dt$, we have applied the technique of introducing a compound Poisson process $\int_U z\tilde N(t,dz)$; See Section 4.3.2 in \cite{Applebaum2009}.

For our example, we simulate $N=100$ particles with a time step $0.01$, $T=10$. Fig.2(a) and Fig.2(b) show the realizations of the interacting particle systems associated with McKean-Vlasov SDE (\ref{Example-equation}) and (\ref{AExample-equation}), respectively, with the same initial conditions $X_{\varepsilon}(0)=\bar{X}(0)=1$ and $\varepsilon=0.01$. Numerically, the Wasserstein distance between distributions of (\ref{Example-equation}) (i.e., $\mathscr{L}_{X_{\varepsilon}(t)}$)  and (\ref{AExample-equation}) (i.e., $\mathscr{L}_{\bar{X}(t)}$ ) is thus approximated by those of the interacting particle systems; See Fig.2(c).

\section*{Acknowledgements}
This research was supported by the National Natural Science Foundation of China (12101484, 12141107), the Fundamental Research Funds for the Central Universities (xzy012022001, 5003011053), and the China Postdoctoral Science Foundation (2022TQ0009, 2022M720264).

\bibliographystyle{plain}
\bibliography{yingreferences}

\begin{thebibliography}{10}

\bibitem{Applebaum2009}
D.~Applebaum.
\newblock {\em L{\'e}vy processes and stochastic calculus}.
\newblock Cambridge university press, 2009.

\bibitem{Bahlali2020}
K.~Bahlali, M.~A. Mezerdi, and B.~Mezerdi.
\newblock Stability of {M}c{K}ean--{V}lasov stochastic differential equations
  and applications.
\newblock {\em Stoch. Dyn.}, 20(01):2050007, 2020.

\bibitem{Carmona2018probabilistic}
R.~Carmona and F.~Delarue.
\newblock {\em Probabilistic theory of mean field games with applications II}.
\newblock Springer, 2018.

\bibitem{Dareiotis2016}
K.~Dareiotis, C.~Kumar, and S.~Sabanis.
\newblock On tamed {E}uler approximations of {SDE}s driven by {L}{\'e}vy noise
  with applications to delay equations.
\newblock {\em SIAM Journal on Numerical Analysis}, 54(3):1840--1872, 2016.

\bibitem{Ding2021}
X.~Ding and H.~Qiao.
\newblock Euler--{M}aruyama approximations for stochastic {M}c{K}ean--{V}lasov
  equations with non-{L}ipschitz coefficients.
\newblock {\em J. Theoret. Probab.}, 34(3):1408--1425, 2021.

\bibitem{Dos2022simulation}
G.~dos Reis, S.~Engelhardt, and G.~Smith.
\newblock Simulation of {M}c{K}ean--{V}lasov {SDE}s with super-linear growth.
\newblock {\em IMA J. Numer. Anal.}, 42(1):874--922, 2022.

\bibitem{Duan2015}
J.~Duan.
\newblock {\em An Introduction to Stochastic Dynamics}.
\newblock Cambridge University Press, Cambridge, 2015.

\bibitem{Hao2016}
T.~Hao and J.~Li.
\newblock Mean-field {SDE}s with jumps and nonlocal integral-{PDE}s.
\newblock {\em Nonlinear Diff. Equ. Appl.}, 23(2):1--51, 2016.

\bibitem{Hong2022mckean}
W.~Hong, S.~Hu, and W.~Liu.
\newblock Mckean-{V}lasov {SDE}s and {SPDE}s with locally monotone
  coefficients.
\newblock {\em ArXiv:2205.04043}, 2022.

\bibitem{Kac1956}
M.~Kac.
\newblock Foundations of kinetic theory.
\newblock In {\em Proceedings of The third Berkeley symposium on mathematical
  statistics and probability}, pages 171--197, 1956.

\bibitem{Khasminskii1968}
R.~Khasminskii.
\newblock On the averaging principle for stochastic differential {I}t{\^o}
  equation.
\newblock {\em Kibernetika}, 4(3):260--279, 1968.

\bibitem{Kloeden2010}
P.~E. Kloeden and T.~Lorenz.
\newblock Stochastic differential equations with nonlocal sample dependence.
\newblock {\em Stochastic Anal. Appl.}, 28(6):937--945, 2010.

\bibitem{Li2022strong}
Y.~Li, X.~Mao, Q.~Song, F.~Wu, and G.~Yin.
\newblock Strong convergence of {E}uler--{M}aruyama schemes for
  {M}c{K}ean--{V}lasov stochastic differential equations under local
  {L}ipschitz conditions of state variables.
\newblock {\em IMA J. Numer. Anal.}, 00:1--35, 2022.

\bibitem{Liu2022}
W.~Liu, Y.~Song, J.~Zhai, and T.~Zhang.
\newblock Large and moderate deviation principles for {M}c{K}ean-{V}lasov
  {SDE}s with jumps.
\newblock {\em Potential Anal.}, pages 1--50, 2022.

\bibitem{Ma2019}
S.~Ma and Y.~Kang.
\newblock Periodic averaging method for impulsive stochastic differential
  equations with {L}{\'e}vy noise.
\newblock {\em Appl. Math. Lett.}, 93:91--97, 2019.

\bibitem{Majka2016note}
Mateusz~B Majka.
\newblock A note on existence of global solutions and invariant measures for
  jump sdes with locally one-sided {L}ipschitz drift.
\newblock {\em ArXiv:1612.03824}, 2016.

\bibitem{Mao2007}
X.~Mao.
\newblock {\em Stochastic differential equations and applications}.
\newblock Horwood, Chichester, 2007.

\bibitem{Mckean1966}
H.P. McKean.
\newblock A class of {M}arkov processes associated with nonlinear parabolic
  equations.
\newblock {\em Proc. Natl. Acad. Sci. U.S.A.}, 56(6):1907--1911, 1966.

\bibitem{Mehri2020}
S.~Mehri, M.~Scheutzow, W.~Stannat, and B.Z. Zangeneh.
\newblock Propagation of chaos for stochastic spatially structured neuronal
  networks with delay driven by jump diffusions.
\newblock {\em Ann. Appl. Probab.}, 30(1):175--207, 2020.

\bibitem{Neelima2020}
D.~Neelima, S.~Biswas, C.~Kumar, G.~dos Reis, and C.~Reisinger.
\newblock Well-posedness and tamed {E}uler schemes for {M}c{K}ean-{V}lasov
  equations driven by {L}\'evy noise.
\newblock {\em ArXiv:2010.08585}, 2020.

\bibitem{Pei2020}
B.~Pei, Y.~Xu, and J.~Wu.
\newblock Stochastic averaging for stochastic differential equations driven by
  fractional {B}rownian motion and standard {B}rownian motion.
\newblock {\em Appl. Math. Lett.}, 100:106006, 2020.

\bibitem{Shen2022}
G.~Shen, J.~Song, and J.~Wu.
\newblock Stochastic averaging principle for distribution dependent stochastic
  differential equations.
\newblock {\em Appl. Math. Lett.}, 125:107761, 2022.

\bibitem{Shen2022av}
G.~Shen, J.~Xiang, and J.~Wu.
\newblock Averaging principle for distribution dependent stochastic
  differential equations driven by fractional {B}rownian motion and standard
  {B}rownian motion.
\newblock {\em J. Differ. Equ.}, 321:381--414, 2022.

\bibitem{Shiryave1989}
A.~N. Shiryave.
\newblock {\em Probability}.
\newblock Spring, New York, 1989.

\bibitem{Wang2018}
F.~Wang.
\newblock Distribution dependent {SDE}s for {L}andau type equations.
\newblock {\em Stochastic Process. Appl.}, 128(2):595--621, 2018.

\bibitem{Xu2021}
J.~Xu, J.~Liu, J.~Liu, and Y.~Miao.
\newblock Strong averaging principle for two-time-scale stochastic
  {M}c{K}ean-{V}lasov equations.
\newblock {\em Appl. Math. Opt.}, 84(1):837--867, 2021.

\bibitem{Xu2011}
Y.~Xu, J.~Duan, and W.~Xu.
\newblock An averaging principle for stochastic dynamical systems with
  {L}{\'e}vy noise.
\newblock {\em Phys. D}, 240(17):1395--1401, 2011.

\end{thebibliography}

\section*{Appendix}

\begin{proof}[\bf A. The details of the proof for Lemma \ref{lemma3-1}]
For $\forall t\in[0,T]$, $\forall x,y\in\mathbb{R}^d$ and  $\forall \mu,\mu_1,\mu_2 \in\mathcal{M}_{2}(\mathbb{R}^d)$, we calculate successively that
\begin{align}
    \left\langle x-y, \bar{b}(x,\mu)-\bar{b}(y,\mu)\right\rangle 
\leqslant&|x-y|^2+\frac{1}{2t}\int_0^t|b(s,x,\mu)-\bar{b}(x,\mu)|^2+|b(s,y,\mu)-\bar{b}(y,\mu)|^2ds+L_R|x-y|^2\notag\\
\leqslant&\varphi_1(t)C_R^b(1+|x|^2+|y|^2)+(\frac{1}{2}+L_R)|x-y|^2,\notag\\
\left\langle x, \bar{b}(x,\mu)\right\rangle 
\leqslant&\frac{|x|^2}{2}+\frac{1}{2t}\int_0^t|b(s,x,\mu)-\bar{b}(x,\mu)|^2ds+K(1+|x|^2+W_2^2(\mu,\delta_0))\notag\\
\leqslant&\frac{1}{2}\varphi_1(t)C_R^b(1+|x|^2)+\frac{3}{2}K(1+|x|^2+W_2^2(\mu,\delta_0)),\notag\\
|\bar{b}(x,\mu_1)-\bar{b}(x,\mu_2)|^2
\leqslant&\frac{3}{t}\int_0^t|b(s,x,\mu_1)-\bar{b}(x,\mu_1)|^2+|b(s,x,\mu_2)-\bar{b}(x,\mu_2)|^2+|b(s,x,\mu_1)-b(s,x,\mu_2)|^2ds\notag\\
\leqslant&6C_R^b\varphi_1(t)(1+|x|^2)+3LW_2^2(\mu_1,\mu_2),\notag\\
|\bar{b}(x,\mu_1)-\bar{b}(y,\mu_2)|^2
\leqslant&\frac{3}{t}\int_0^t|b(s,x,\mu_1)-\bar{b}(x,\mu_1)|^2+|b(s,y,\mu_2)-\bar{b}(y,\mu_2)|^2+|b(s,x,\mu_1)-b(s,y,\mu_2)|^2ds\notag\\
\leqslant&6C_R^b\varphi_1(t)(1+|x|^2)+\frac{3}{t}\int_0^t|b(s,x,\mu_1)-b(s,y,\mu_2)|^2ds,\notag\\
|\bar{b}(x,\mu)|^2
\leqslant&
2\Big|\frac{1}{t}\int_0^t b(s,x,\mu)-\bar{b}(x,\mu)ds\Big|^2+2\Big|\frac{1}{t}\int_0^tb(s,x,\mu)ds\Big|^2\notag\\
\leqslant& 2C_R^b\varphi_1(t)(1+|x|^2)+2K(1+|x|^{\kappa}+W_2^{\kappa}(\mu,\delta_0)).\notag
\end{align}
Similar results for $\bar{\sigma}$ and $\bar{h}$ are also available. Let $t\to\infty$ in above estimates, we conclude that the averaged equation (\ref{ASDE}) satisfies Assumptions {\bf A1-A5}.

We next check the extra conditions for $\bar{h}$ by calculating that (for $l=r$ or $\kappa$)
\begin{align}
\int_U|\bar{h}(x,\mu,z)|^l\nu(dz)
&\leqslant\frac{2^{l-1}}{t}\int_0^t \int_U|h(s,x,\mu,z)-\bar{h}(x,\mu,z)|^r+|h(s,x,\mu,z)|^r\nu(dz)ds\notag\\
&\leqslant 2^{l-1}C_R^{h}\varphi(t)(1+|x|^l)+2^{l-1}K(1+|x|^{l}+W_2^{l}(\mu,\delta_0)),\notag\\
\int_U|\bar{h}(x,\mu,z)-\bar{h}(y,\mu,z)|^{\kappa}\nu(dz)
\leqslant&\frac{3^{\kappa-1}}{t}\int_0^t\int_U|h(s,x,\mu,z)-\bar{h}(x,\mu,z)|^{\kappa}+|h(s,y,\mu,z)-\bar{h}(y,\mu,z)|^{\kappa}\notag\\
&+|h(s,x,\mu,z)-h(s,y,\mu,z)|^{\kappa}\nu(dz)ds\notag\\
\leqslant&
3^{\kappa-1}\varphi(t)C_R^{h}(2+|x|^{\kappa}+|y|^{\kappa})+3^{\kappa-1}L_R^{'}|x-y|^{\kappa},
\notag\\
\int_U|\bar{h}(x,\mu_1,z)-\bar{h}(x,\mu_2,z)|^{\kappa}\nu(dz)
\leqslant&
\frac{3^{\kappa-1}}{t}\int_0^t\int_U|h(s,x,\mu_1,z)-\bar{h}(x,\mu_1,z)|^{\kappa}+|h(s,x,\mu_2,z)-\bar{h}(x,\mu_2,z)|^{\kappa}\notag\\
&+\int_U|h(s,x,\mu_1,z)-h(s,x,\mu_2,z)|^{\kappa}\nu(dz)ds\notag\\
\leqslant&2\cdot 3^{\kappa-1}\varphi(t)C_R^{h}(1+|x|^{\kappa})+3^{\kappa-1}L^{'}W_2^{\kappa}(\mu_1,\mu_2),\notag
\end{align}
and then taking $t\to\infty$. That is, Assumption {\bf A7} holds.
\end{proof}

\noindent{\bf B. Propagation of chaos.}
For $N\geqslant1$ and $i=1,2,\ldots, N$, let $(W^i,\tilde{N}^i,X^i(0))$ be independent copies of $(W,\tilde{N},X(0))$. We introduce the non-interacting particle system associated with the McKean-Vlasov SDE (\ref{Main-equation}):
\begin{align}\label{Non-Main-equation}
&dX^i(t)=b\left(t,X^i(t),\mathscr{L}_{X^i(t)}\right) dt+\sigma\left(t,X^i(t),\mathscr{L}_{X^i(t)}\right)dW^i(t)+\int_{U}h\left(t,X^i(t),\mathscr{L}_{X^i(t)},z\right)\tilde{N}^i(dt,dz)
\end{align}
on $t\in[0,T]$ with initial data $X^i(0)$. According to Theorem \ref{mainresult1}, one has $\mathscr{L}_{X^i(t)}=\mathscr{L}_{X(t)}$, $i=1,2,\ldots, N$.

We also consider the associated interacting particle system
\begin{align}\label{Inter-Main-equation}
&dX^{i,N}(t)=b\left(t,X^{i,N}(t),\mu_t^{X,N}\right) dt+\sigma\left(t,X^{i,N}(t),\mu_t^{X,N}\right)dW^{i}(t)+\int_{U}h\left(t,X^{i,N}(t),\mu_t^{X,N},z\right)\tilde{N}^{i}(dt,dz)
\end{align}
with initial data $X^{i,N}(0)=X^i(0)$, where $\mu_t^{X,N}$ is an empirical measure of $N$ particles given by $\mu_t^{X,N}:=\frac{1}{N}\sum_{j=1}^N\delta_{X^{j,N}(t)}$. We next state and prove the propagation of chaos result.
\begin{proposition}(Propagation of chaos) Suppose Assumptions {\bf A1-A7} hold and $r\geqslant4$. Then, the interacting particle system (\ref{Inter-Main-equation}) is well-posed and converges to the non-interacting particle system (\ref{Non-Main-equation}), that is,
\begin{equation}\label{Poc}
    \lim_{N\to\infty}\sup_{1\leqslant i\leqslant N}\sup_{0\leqslant t\leqslant T}\mathbb{E}\left|X^i(t)-X^{i,N}(t)\right|^2=0.
\end{equation}
\end{proposition}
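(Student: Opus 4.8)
The plan is to reduce \eqref{Inter-Main-equation} to a classical high-dimensional SDE and then reuse the estimates developed for Theorems \ref{mainresult1} and \ref{Th}. First I would check well-posedness and uniform moment bounds: viewing $(X^{1,N},\dots,X^{N,N})$ as a single $Nd$-dimensional process whose coefficients read the empirical measure $\mu_t^{X,N}=\frac1N\sum_{j=1}^N\delta_{X^{j,N}(t)}$ off the state, the bounds $W_2^2(\mu^{x,N},\mu^{y,N})\le\frac1N\sum_j|x_j-y_j|^2$ and $W_2^2(\mu^{x,N},\delta_0)=\frac1N\sum_j|x_j|^2$ show that Assumptions {\bf A1}--{\bf A7} for $b,\sigma,h$ pass to the $Nd$-dimensional coefficients; Theorem \ref{mainresult1} then gives existence and uniqueness for \eqref{Inter-Main-equation}, and the proof of Lemma \ref{UBP} applied to this system yields
\[
\sup_{N\ge1}\ \sup_{1\le i\le N}\ \mathbb{E}\Big[\sup_{0\le t\le T}\big|X^{i,N}(t)\big|^{r}\Big]\le C_r ,
\]
uniformly in $N$ (the constant in Lemma \ref{UBP} depends only on $r,T,K,K_2,\mathbb{E}|x_0|^r$); the i.i.d.\ copies $X^i$ of \eqref{Non-Main-equation} obey the same bound.

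Second, I would invoke the quantitative law of large numbers in Wasserstein distance. Writing $\mu_t^{X,(N)}:=\frac1N\sum_{j=1}^N\delta_{X^j(t)}$ for the empirical measure of the \emph{non-interacting} particles --- which at each $t$ are i.i.d.\ with law $\mathscr{L}_{X(t)}$ and with $r$-th moments bounded uniformly in $t$ and $N$ --- the classical estimates for empirical measures (e.g.\ Fournier--Guillin) give $\lim_{N\to\infty}\varepsilon_N=0$, where $\varepsilon_N:=\sup_{0\le t\le T}\mathbb{E}\,W_2^2(\mu_t^{X,(N)},\mathscr{L}_{X(t)})$; only this convergence, not a rate, is used.

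Third comes the core estimate. Put $e_i(t):=X^i(t)-X^{i,N}(t)$, fix the index $i$ (all indices behave identically by exchangeability), and for $R>0$ set $\tau_R:=\inf\{t\in[0,T]:|X^i(t)|\vee|X^{i,N}(t)|>R\}$, so that $\mathbb{P}(\tau_R\le T)\le C_r R^{-r}$ uniformly in $N$. Subtracting \eqref{Non-Main-equation} from \eqref{Inter-Main-equation} and applying It\^o's formula to $|e_i(\cdot\wedge\tau_R)|^2$ exactly as in Lemma \ref{Cauchy} and Theorem \ref{Th}, one controls on $\{s\le\tau_R\}$ the drift cross term by $L_R|e_i(s)|^2+\sqrt L\,|e_i(s)|\,W_2(\mathscr{L}_{X(s)},\mu_s^{X,N})$ via {\bf A1}--{\bf A2}, the $\sigma$- and $h$-quadratic-variation terms likewise, and the martingale parts by Burkholder--Davis--Gundy and Kunita's first inequality (the quadratic variations being absorbed into $\tfrac12\mathbb{E}\sup_{s\le t}|e_i(s\wedge\tau_R)|^2$). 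Bounding $W_2^2(\mathscr{L}_{X(s)},\mu_s^{X,N})\le 2W_2^2(\mathscr{L}_{X(s)},\mu_s^{X,(N)})+\tfrac2N\sum_{j}|e_j(s)|^2$ and using exchangeability together with $\tfrac1N\sum_j\mathbb{E}|e_j(s)|^2=\mathbb{E}|e_i(s)|^2\le\mathbb{E}|e_i(s\wedge\tau_R)|^2+C R^{-(r-2)}$ (Cauchy--Schwarz, the uniform $r$-th moment bound, and the bound on $\mathbb{P}(\tau_R\le T)$), one closes a Gr\"onwall inequality for $G(t):=\mathbb{E}\sup_{0\le s\le t}|e_i(s\wedge\tau_R)|^2$,
\[
G(t)\le C_R\int_0^t G(s)\,ds+C\big(\varepsilon_N+R^{-(r-2)}\big),
\]
hence $G(T)\le C(\varepsilon_N+R^{-(r-2)})e^{C_R T}$ and $\mathbb{E}|e_i(t)|^2\le G(t)+C R^{-(r-2)}$; passing to the limit first in $N$ and then in $R$ is then expected to give \eqref{Poc}, the supremum over $i$ being superfluous by exchangeability.

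The step I expect to be the genuine obstacle is reconciling the \emph{merely local} Assumption {\bf A1} with the demand that every constant be uniform in $N$: the local Lipschitz constant $L_R$ necessarily enters the Gr\"onwall exponent $e^{C_R T}$, whereas the truncation error $R^{-(r-2)}$ decays only polynomially, so the order of the two limits and the choice of $R$ --- possibly letting $R=R(N)\to\infty$ at a rate tuned to $\varepsilon_N$ and the growth of $L_R$, or replacing the pointwise truncation by one on the empirical second moment --- has to be arranged with some care; one must also split off the empirical-measure error $\varepsilon_N$ \emph{before} localizing, so that it is not coupled to the event $\{\tau_R\le T\}$. Everything else is a bookkeeping repetition, now carried out simultaneously for the $N$ exchangeable copies, of the estimates already performed in Sections \ref{sec2}--\ref{sec3}.
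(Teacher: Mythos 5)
Your proposal is correct and follows essentially the same route as the paper's proof: reduce \eqref{Inter-Main-equation} to an $Nd$-dimensional classical SDE for well-posedness and uniform moment bounds, localize with a stopping time and split on $\{\tau_R\le T\}$ versus its complement, use {\bf A1}--{\bf A2} plus the splitting $W_2^2(\mathscr{L}_{X(s)},\mu_s^{X,N})\le 2W_2^2(\mathscr{L}_{X(s)},\mu_s^{X,(N)})+2W_2^2(\mu_s^{X,(N)},\mu_s^{X,N})$, close a Gr\"onwall inequality with an $R$-dependent constant, invoke the empirical-measure law of large numbers (the paper cites Theorem 5.8 of \cite{Carmona2018probabilistic}), and take $N\to\infty$ for fixed $R$ before letting $R\to\infty$. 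The only differences are bookkeeping: the paper works with $\sup_{t}\mathbb{E}|X^i(t\wedge\zeta_R)-X^{i,N}(t\wedge\zeta_R)|^2$ so the martingale terms drop out (no BDG/Kunita needed), and your explicit treatment of the stopped-versus-unstopped coupling via exchangeability, which the paper handles more tersely, is a sound way to close the Gr\"onwall step.
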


\begin{proof}
First, note that the interacting particle system $\{X^{i,N}\}_{1\leqslant i\leqslant N}$ given in equation (\ref{Inter-Main-equation}) can be regarded as ordinary SDEs driven by L\'evy noise taking values in $\mathbb{R}^{d\times N}$. Thus, according to the Theorem 1.1 of \cite{Majka2016note}, it has a unique c\`{a}dl\`{a}g solution under Assumptions {\bf A1, A4, A5} such that 
$$\sup_{1\leqslant i\leqslant N}\mathbb{E}\sup_{0\leqslant t\leqslant T}\left|X^{i,N}(t)\right|^4\leqslant C,$$
for any $N\geqslant1$ where $C>0$ is independent of $N$.

To treat the one-sided local Lipschitz case, for any $1\leqslant i\leqslant N$ and $R>0$, define the stopping time:
$$\zeta_R:=\inf\left\{t\in[0,T]: |X^i(t)|\vee|X^{i,N}(t)|> R\right\}.$$
Then, by De Morgan's Law, we arrive at
\begin{equation*}
\sup_{0\leqslant t\leqslant T}\mathbb{E}\left|X^i(t)-X^{i,N}(t)\right|^{2}\leqslant \sup_{0\leqslant t\leqslant T}\mathbb{E}\left[\left|X^i(t)-X^{i,N}(t)\right|^{2}\mathbb{I}_{\{\zeta_R>T\}}\right]+\sup_{0\leqslant t\leqslant T}\mathbb{E}\left[\left|X^i(t)-X^{i,N}(t)\right|^{2}\mathbb{I}_{\{\zeta_R\leqslant T\}}\right]:=Q_1+Q_2,
\end{equation*}
where $\mathbb{I}_A$ is an indicator function of set $A$. Similar to the proof of Theorem \ref{Th}, we now calculate $Q_1$ and $Q_2$ respectively.

(1) Estimation of the term $Q_1$. Note that 
\begin{equation}\label{Q_1}
Q_1=\sup_{0\leqslant t\leqslant T}\mathbb{E}\left[\left|X^i(t)-X^{i,N}(t)\right|^{2}\mathbb{I}_{\{\zeta_R>T\}}\right]\leqslant \sup_{0\leqslant t\leqslant T}\mathbb{E}\left|X^i(t\wedge\zeta_R)-X^{i,N}(t\wedge\zeta_R)\right|^{2}.
\end{equation}
By It\^o formula, 
\begin{align}
\mathbb{E}\left|X^i(t\wedge\zeta_R)-X^{i,N}(t\wedge\zeta_R)\right|^{2}
=&2\mathbb{E}\int_0^{t\wedge\zeta_R}\left\langle X^i(s)-X^{i,N}(s), b\left(s,X^i(s),\mathscr{L}_{X^i(s)}\right)-b\left(s,X^{i,N}(s),\mu_s^{X,N}\right)\right\rangle ds\notag\\
&+\mathbb{E}\int_0^{t\wedge\zeta_R}\left\| \sigma\left(s,X^i(s),\mathscr{L}_{X^i(s)}\right)-\sigma\left(s,X^{i,N}(s),\mu_s^{X,N}\right) \right\|^2 ds\notag\\
&+\mathbb{E}\int_0^{t\wedge\zeta_R}\int_U\left|h\left(s,X^i(s),\mathscr{L}_{X^i(s)},z\right)-h\left(s,X^{i,N}(s),\mu_s^{X,N},z\right)\right|^2 \nu(dz)ds:=\sum_{i=1}^3Q_{i,R}(t).\notag 
\end{align} 
We estimate $Q_{i,R}$, $i=1,2,3$ by Assumptions {\bf A1-A2} and obtain that
\begin{align}
Q_{1,R}(t)
\leqslant&(2L_R+1)\int_0^{t\wedge\zeta_R}\mathbb{E}\left|X^i(s)-X^{i,N}(s)\right|^2ds+L\int_0^{t\wedge\zeta_R} \mathbb{E}W^2\left(\mathscr{L}_{X^{i}(s)},\mu_s^{X,N}\right)ds,\notag\\
Q_{2,R}(t)
\leqslant&2L_R\int_0^{t\wedge\zeta_R}\mathbb{E}\left|X^i(s)-X^{i,N}(s)\right|^2ds+L\int_0^{t\wedge\zeta_R}\mathbb{E}W_2^2\left(\mathscr{L}_{X^{i}(s)},\mu_s^{X,N}\right)ds,\notag\\
Q_{3,R}(t)\leqslant
&2L_R\int_0^{t\wedge\zeta_R}\mathbb{E}\left|X^i(s)-X^{i,N}(s)\right|^2ds+L\int_0^{t\wedge\zeta_R}\mathbb{E}W_2^2\left(\mathscr{L}_{X^{i}(s)},\mu_s^{X,N}\right)ds,\notag
\end{align}
where
$$
\int_0^{t\wedge\zeta_R}\mathbb{E}W_2^2\left(\mathscr{L}_{X^{i}(s)},\mu_s^{X,N}\right)ds \leqslant
2\int_0^{t\wedge\zeta_R}\mathbb{E}W_2^2\left(\mathscr{L}_{X^{i}(s)},\mu_s^{X}\right)ds+2\int_0^{t\wedge\zeta_R}\mathbb{E}W_2^2\left(\mu_s^{X},\mu_s^{X,N}\right)ds,
$$
with $\mu_t^X:=\frac{1}{N}\sum_{i=1}^N\delta_{X^{i}(t)}$ the empirical measure. Then, by combining these estimates and applying the Gr\"onwall inequality, we eventually have
\begin{align}
Q_1\leqslant\sup_{0\leqslant t\leqslant T}\mathbb{E}\left|X^i(t\wedge\zeta_R)-X^{i,N}(t\wedge\zeta_R)\right|^{2}
\leqslant 6Le^{(6L_R+1+6L)T}\int_0^{T}\mathbb{E}W_2^2\left(\mathscr{L}_{X^{i}(s)},\mu_s^{X}\right)ds.\notag
\end{align} 

(2) Estimation of the term $Q_2$.  Using the Cauchy-Schwarz inequality and Theorem \ref{mainresult1}, we deduce that
\begin{align}\label{Q-2}
Q_2=\sup_{0\leqslant t\leqslant T}\mathbb{E}\left[\left|X^i(t)-X^{i,N}(t)\right|^2\mathbb{I}_{\{\zeta_R\leqslant T\}}\right]
\leqslant&\sup_{0\leqslant t\leqslant T}\sqrt{\mathbb{E}\left(\left|X^i(t)-X^{i,N}(t)\right|^2\right)^2}\sqrt{\mathbb{E}\left(\mathbb{I}_{\{\zeta_R\leqslant T\}}\right)^2}\notag\\
\leqslant&\frac{2\sqrt{2}}{R^2}\left(\mathbb{E}\sup_{0\leqslant t\leqslant T}|X^i(t)|^4+\mathbb{E}\sup_{0\leqslant t\leqslant T}|{X}^{i,N}(t)|^4\right)\leqslant \frac{C}{R^2}.
\end{align}

With the estimations of $Q_1$ and $Q_2$ at hand, we conclude that 
\begin{align}\label{Q-1+2}
\sup_{1\leqslant i\leqslant N}\sup_{0\leqslant t\leqslant T}\mathbb{E}\left|X^i(t)-X^{i,N}(t)\right|^2\leqslant6Le^{(6L_R+1+6L)T}\int_0^{T}\sup_{1\leqslant i\leqslant N}\mathbb{E}W_2^2\left(\mathscr{L}_{X^{i}(s)},\mu_s^{X}\right)ds+\frac{C}{R^2}.
\end{align}
Notice that, by the Theorem 5.8 of \cite{Carmona2018probabilistic},
 \begin{equation*}   
  \begin{split}
\mathbb{E}W_2^2\left(\mathscr{L}_{X^{i}(s)},\mu_s^{X}\right)\leqslant C \left \{
  \begin{array}{lll}
   N^{-1/2},& \text{if } d<4,\\
   N^{-1/2}\ln(N),&\text{if } d=4,\\
   N^{-1/2},&\text{if } d>4.
  \end{array}
   \right.
    \end{split}
  \end{equation*}
We find that the right hand of the estimate \eqref{Q-1+2} converges to 0 when $N$ goes to infinity. The result thus follows and the proof is completed.
\end{proof}

\end{document}